\newdimen\mainfontsize \mainfontsize=1\@ptsize pt
\theoremstyle{plain}
\newtheorem{thm}{Theorem}[section]
\newtheorem{lem}[thm]{Lemma}
\newtheorem{cor}[thm]{Corollary}
\theoremstyle{definition}
\newtheorem{defn}[thm]{Definition}
\theoremstyle{remark}
\newtheorem{rem}[thm]{Remark}
\title{Stochastic evolution equations for large portfolios of stochastic volatility models}
\author{Ben Hambly\footnote{hambly@maths.ox.ac.uk} $\,$ and Nikolaos Kolliopoulos\footnote{kolliopoulos@maths.ox.ac.uk (corresponding author)} \\
Mathematical Institute, University of Oxford}
\date{\today} 
\begin{document}
\maketitle

%
%
%
%
%
%

\begin{abstract}
We consider a large market model of defaultable assets in which the asset price processes are modelled as Heston-type stochastic volatility models with default upon hitting a lower boundary. We assume that both the asset prices and their volatilities are correlated through 
systemic Brownian motions. We are interested in the loss process that arises in this setting and we prove the existence of a large 
portfolio limit for the empirical measure process of this system. This limit evolves as a measure valued process and we show that it will have a
density given in terms of a solution to a stochastic partial differential equation of filtering type in the two-dimensional half-space, with a Dirichlet boundary condition. We employ Malliavin calculus to establish the existence of a regular density for the volatility component, and an approximation by models of piecewise constant volatilities combined with a kernel smoothing technique to obtain existence and regularity for the full two-dimensional filtering problem. We are able to establish good regularity properties for solutions, however uniqueness remains an open problem.
\end{abstract}

\section{Introduction}

In the study of large portfolios of assets it is common to model correlation through factor models. In this setting the random 
drivers of individual asset prices come from two independent sources. Firstly there is an idiosyncratic component that reflects the 
movements due to the asset's individual circumstances. Secondly there are systemic components that reflect the impact of 
macroscopic events at the whole market or sector level.  The motivations for this paper come from developing 
such models firstly for credit derivatives such as CDOs which are functions of large portfolios of credit risky assets, but also for 
the evolution of large portfolios which have exposure to a significant proportion of the whole market. 
The financial crisis of 2008 showed that the correlation between credit risky assets was not adequately 
modelled and in this work we will examine the behaviour of a large market when all the individual assets follow
classical stochastic volatility models but are correlated through market factors.

Our starting point is a simple structural model for default in a large portfolio, studied in \cite{BHHJR}. In this setting there is a 
market of $N$ credit risky assets in which the $i$-th asset price $A^i$ for $i=1,\dots,N$ is modelled by a geometric Brownian motion 
with a single systemic risk factor, in that under a risk neutral measure
\begin{eqnarray*}
dA_{t}^{i} &=& rA_{t}^{i}dt+\sigma A_{t}^{i}\left(\sqrt{1-\rho^{2}} dW_{t}^{i}+\rho dW^0_{t}\right),\;\;0\leq t\leq T_{i}
\\
A_{t}^{i} &=& b^{i},\;t>T_{i}\\
A_{0}^{i} &=& a^{i},
\end{eqnarray*}
where $T_{i}=\inf\{t\geq0:\,A_{t}^{i}=b^{i}\}$ for some constant default barrier $b^i$ and the parameters $r,\sigma,\rho, a^i$ are constants. 
Here the Brownian motions $W^0,W^1,\dots$ 
are all independent and we see that it is $W^0$ which captures the macroscopic effects felt by the whole market. We note that
the parameters of the geometric Brownian motions, are the same for each asset, it is just the starting point and idiosyncratic noise which cause the differences in asset 
prices. By rewriting this in terms of a distance to default process and considering the empirical measure
it was shown in \cite{BHHJR} that the limit empirical measure process of the model has a density which is the unique strong solution to
an SPDE on the positive half-line. The density takes values in a weighted Sobolev space as the derivatives of the density may not be
well behaved at the origin. The exact regularity of the density at the origin
was the subject of \cite{Ledger14}, where it was shown that the regularity is a function of the parameter $\rho$. 

This is a naive model and has the problems that would be expected from such a simple structural default model. The short term credit
spreads go to 0 and we see correlation skew when using the model to price the tranches of CDOs. Thus we
wish to investigate a model which incorporates more realistic features. In particular we take stochastic volatility models for
the underlying assets and allow there to be global volatility factors driving the market volatility as well as idiosyncratic
factors for the volatilities of the individual assets. It is also the case that we would like to allow the parameters that describe
the volatility and correlation between assets to vary.

In this paper we consider a large portfolio of $N$ credit risky assets,
where now stochastic volatility models are used instead of Black-Scholes models to describe the evolution of the asset values. The CIR process is used to model the volatility as it is non-negative and mean reverting.
We assume the $i$-th value process $A_i$ satisfies the following system of SDEs
\begin{equation}
\begin{array}{rcl}
dA_{t}^{i} &=& A_{t}^{i}{\mu}_{i}dt+A_{t}^{i}h\left(\sigma_{t}^{i}\right)\left(\sqrt{1-\rho_{1,i}^{2}}dW_{t}^{i}+\rho_{1,i}dW_{t}^{0}\right),\;0\leq t\leq T_{i}\\
d\sigma_{t}^{i} &=& k_{i}(\theta_{i}-\sigma_{t}^{i})dt+\xi_{i}\sqrt{\sigma_{t}^{i}}\left(\sqrt{1-\rho_{2,i}^{2}}dB_{t}^{i}+\rho_{2,i}dB_{t}^{0}\right),\;t\geq 0\\
A_{t}^{i}&=&b^{i},\;t>T_{i}\\
(A_{0}^{i},\,\sigma_{0}^{i}) &=& (a^{i},\,\sigma^{i}),
\end{array}
\label{eq:modelA}
\end{equation}
for all $i\in\{1,2,\dots,N\}$, where $T_{i}=\inf\{t\geq0:\,A_{t}^{i}=b^{i}\}$.
Here, $a^{1},\,a^{2},\,...,\,a^{N}$ and $\sigma^{1},\,\sigma^{2},\,...,\,\sigma^{N}$
are the initial values of the asset prices and the volatilities respectively,
$b^{i}$ is the constant default barrier for the value of the $i$-th asset,
$C_{i}=(k_{i},\,\theta_{i},\,\xi_{i},\,r_{i},\,\rho_{1,i},\,\rho_{2,i})$
for $i\in\{1,\,2,\,...,\,N\}$ are vectors for the various parameters of the model, $h$ is a
function with enough regularity, and $W_{t}^{1},\,B_{t}^{1},\,...,\,W_{t}^{N},\,B_{t}^{N}$
are standard Brownian motions. We will assume that $(a^{i},\,\sigma^{i})$ and $C_{i}$ are drawn independently from some distribution and the Brownian motions are independent
from each other and from each $a^{i},\,\sigma^{i}$ and $C_{i}$. Finally $\left(W_{t}^{0}, B_{t}^{0}\right)$ is a pair of correlated Brownian motions, independent of both $W^i$ and $B^i$ for all $i=1,\dots, N$ as well as $a^{i},\,\sigma^{i}$ and $C_{i}$, which represents the impact of macroscopic factors on each asset and each volatility respectively. 
As is usual in a credit setting we consider the distance to default, or the log asset prices, by 
setting $X_{t}^{i}=\left(\ln A_{t}^{i}-\ln b^{i}\right)$ in \eqref{eq:modelA}.
Applying Ito's formula, our model becomes
\begin{equation}
\begin{array}{rcl}
dX_{t}^{i} &=& \left(r_{i}-\frac{h^{2}(\sigma_{t}^{i})}{2}\right)dt+h(\sigma_{t}^{i})\left(\sqrt{1-\rho_{1,i}^{2}}dW_{t}^{i}+\rho_{1,i}dW_{t}^{0}\right),\;\;0\leq t\leq T_{i} \\
d\sigma_{t}^{i} &=& k_{i}(\theta_{i}-\sigma_{t}^{i})dt+\xi_{i}\sqrt{\sigma_{t}^{i}}\sqrt{1-\rho_{2,i}^{2}}dB_{t}^{i}+\xi\sqrt{\sigma_{t}^{i}}\rho_{2,i}dB_{t}^{0},\;\;t\geq 0 \\
X_{t}^{i} &=& 0,\;t>T_{i} \\
(X_{0}^{i},\,\sigma_{0}^{i}) &=& (x^{i},\sigma^{i}),
\end{array}
\label{eq:model}
\end{equation}
for $i\in\{1,\,2,\,...,\,N\}$, where $x^{i}=\left(\ln a^{i}-\ln b^{i}\right)$
and $T_{i}=\inf\{t\geq0:\,X_{t}^{i}=0\}$, $\forall\,1\leq i\leq N$.

\vspace*{.1in}

An important output from such large portfolio models is the loss process, which gives the proportion 
of assets that have defaulted by any time $t$. This can be used to capture some key quantities 
in risk management, such as the probability of loss from a portfolio and the expected loss given 
default. In credit derivative pricing, the payoffs of CDO tranches
are piecewise linear functions of this loss process. 

In our set up the loss process is given by the mass of the two-dimensional empirical measure
\begin{equation}\label{eq:1.5}
v_{t}^{N}=\frac{1}{N} \sum_{i=1}^{N}\delta_{X_{t}^{i},\sigma_{t}^{i}},
\end{equation}
on $\{0\}\times\mathbb{R}$, while the restriction of $v_{t}^{N}$ to $(0,\infty)\times\mathbb{R}$ for $t \geq 0$ is given by 
\begin{equation}\label{eq:1.6}
v_{1,t}^{N}=\frac{1}{N} \sum_{i=1}^{N}\delta_{X_{t}^{i},\sigma_{t}^{i}}\mathbb{I}_{\{T_{i}>t\}}.
\end{equation}
Section 2 establishes the following convergence result: almost surely and for all positive $t$ we have both
\begin{eqnarray*}
v_{t}^{N}\rightarrow v_{t} = \mathbb{P}\left(\left(X_{t}^{1}, \, \sigma_{t}^{1}\right)\in\cdot\,|\,W_{\cdot}^{0},\,B_{\cdot}^{0},\,\mathcal{G}\right)
\end{eqnarray*}
and 
\begin{eqnarray*}
v_{1,t}^{N}\rightarrow v_{1,t} &=& \mathbb{P}\left(\left(X_{t}^{1}, \, \sigma_{t}^{1}\right)\in\cdot,T_{1}>t\,|\,W_{\cdot}^{0},\,B_{\cdot}^{0},\,\mathcal{G}\right) \\
&=& \mathbb{E}\left[v_{t, \, C_{1}}\left(\cdot\right)\,|\,W_{\cdot}^{0},\,B_{\cdot}^{0},\,\mathcal{G}\right]
\end{eqnarray*}
weakly as $N\rightarrow\infty$, for some $\sigma$-algebra $\mathcal{G}$ containing the initial 
data, where we denote by $v_{t, \, C_{1}}\left( \cdot \right)$ the measure-valued process $\mathbb{P}\left(\left(X_{t}^{1}, \, \sigma_{t}^{1}\right)\in\cdot,T_{1}>t\,|\,W_{\cdot}^{0},\,B_{\cdot}^{0}, \, C_{1}, \, \mathcal{G} \right)$. In Sections 3 and 4, we prove that $v_{t,C_{1}}$ - depending on the information contained in $(W_{.}^{0}$, $B_{.}^{0})$, $\mathcal{G}$ and the coefficient vector $C_{1}=(k_{1},\theta_{1},\xi_{1},r_{1},\rho_{1,1},\rho_{2,1})$
- has a density $u_{t,C_{1}}$ in a weighted Sobolev-Lebesgue space of the two-dimensional positive half-space, but with no differentiability in the second spatial variable $y$. Moreover, it is shown in these sections that given $C_{1}$, $u_{t, \, C_{1}}$ satisfies an SPDE in that function space, along with a Dirichlet boundary condition at $x=0$. In Section 5 we improve the 
regularity by obtaining (weak) differentiability also in $y$, along with some good integrability for the derivative. Our SPDE for $u_{t,C_{1}}$ has the 
form
\newpage{}
\begin{eqnarray}
u_{t,C_{1}}(x,\,y)&=&u_{0}(x,\,y)-\int_{0}^{t}\left(r_1-\frac12 h^{2}(y)\right)\left(u_{s,C_{1}}(x,\,y)\right)_{x}ds \nonumber \\
& & -\int_{0}^{t}k_{1}\left(\theta_{1}-y\right)\left(u_{s,C_{1}}(x,\,y)\right)_{y}ds
 +\frac{1}{2}\int_{0}^{t}h^{2}(y)\left(u_{s,C_{1}}(x,\,y)\right)_{xx}ds \nonumber \\ 
 &  & +\frac{\xi_{1}^{2}}{2}\int_{0}^{t}\left(yu_{s,C_{1}}(x,\,y)\right)_{yy}ds  +\xi_{1}\rho_{3}\rho_{1,1}\rho_{2,1}\int_{0}^{t}\left(h(y)\sqrt{y}u_{s,C_{1}}(x,\,y)\right)_{xy}ds 
\nonumber \\
& &  -\rho_{1,1}\int_{0}^{t}h(y)\left(u_{s,C_{1}}(x,\,y)\right)_{x}dW_{s}^{0}-\xi_{1}\rho_{2,1}\int_{0}^{t}\left(\sqrt{y}u_{t,C_{1}}(x,\,y)\right)_{y}dB_{s}^{0} \qquad \quad \label{eq:spde1}
\end{eqnarray}
where $u_0$ is the initial density, $\rho_{3}$ is the correlation coefficient between $W_{t}^{0}$ and $B_{t}^{0}$ (i.e $dW^0_t \cdot dB^0_t = \rho_3dt$), and the boundary condition $u_{t,C_{1}}(0,y)=0$ is satisfied for all $y\in \mathbb{R}$ and $t \geq 0$. Our result for the case where each parameter vector $C_{i}$ is the same constant vector for 
all $i$ will lead to a limiting empirical process whose density is precisely the solution to the above initial-boundary value problem.

In order to implement the model we could solve the initial-boundary value problem for the SPDE 
numerically for samples $C_{i}$ of the parameters. Then, we can approximate the loss process from
\begin{eqnarray}
\lim_{N\rightarrow\infty}v_{t}^{N}(\{0\}\times\mathbb{R})&=&1-\lim_{N\rightarrow\infty}v_{1,t}^{N}(\mathbb{R}^{2})=
1-\int\int_{\mathbb{R}^{2}}u_{t}(x,y)dxdy \nonumber \\
&=& 1-\mathbb{E}\left[\int_{0}^{\infty}\int_{\mathbb{R}}u_{t,C_{1}}(x,\,y)dxdy,\,|\,W_{\cdot}^{0},\,B_{\cdot}^{0},\,\mathcal{G}\right] \nonumber
\\
&\thickapprox & 1-\frac{1}{n}\sum_{i=1}^{n}\int_{0}^{\infty}\int_{\mathbb{R}}u_{t,c_{i}}(x,\,y)dxdy \label{eq:1.8}
\end{eqnarray}
where $\{c_{1},\,c_{2},\,...,\,c_{n}\}$ is a random sample from the
distribution of $C_{1}$. As the SPDE satisfied by
each $u_{t,c_{i}}$ is driven by the two-dimensional Brownian path
$(W_{.}^{0},\,B_{.}^{0})$, we only need to simulate $(W_{.}^{0},\,B_{.}^{0})$ and solve 
the corresponding SPDEs. This approach is quite efficient when the number of assets $N$ is 
large, since we do not have to simulate the $2N$ idiosyncratic Brownian paths. 

\vspace*{.1in}

There are other approaches to the modelling of credit risk in large portfolios which lead to stochastic partial differential equations. For example in a reduced form setting, see \cite{GSS,SG, STG, SSG}. However, this is the first structural large portfolio model to incorporate stochastic volatility and 
also the first to introduce random coefficients in the SDEs describing the evolution of the 
asset prices. This provides a level of generality which captures many features of asset prices, 
and by taking a large portfolio limit reduces the complexity of the numerical calculations 
arising in risk management and in derivatives pricing applications. Of course, a disadvantage of 
the model is the introduction of a large number of parameters that need to be simulated or 
estimated in order to implement the model. Moreover, the random coefficients need assumptions on their 
joint distributions and, when we use Monte Carlo techniques to estimate expectations in \eqref{eq:1.8}, a very 
large number of simulations may still be required due to the random parameters.
The constant coefficient case is just a special case of the model we have considered, in which the weak limit $v_{1,t}$ of the empirical process coincides with the measure-valued process $v_{t,C_1}$ whose density $u_{t,C_1}$ satisfies our SPDE. Our main aim in this paper is to establish the theoretical background for the general case.  

The calibration of the model for its use in the pricing of CDOs would follow a similar approach 
to that used in \cite{BHHJR}. In its simplest form we take the version where the parameters of 
the model are assumed to be constants. The initial condition would be fitted to the CDS prices of 
the underlying constituents of the portfolio. 
The parameters of the model are then determined from the market tranche prices of the CDOs with
different maturities. This is done by solving the model forward from different parameter settings 
to find model tranche prices and then minimizing the least squares distance between model and 
market to locate the best fit parameters.

The approach to solving the model forward must be done numerically. This type of model is more
computationally intensive than that considered in \cite{BHHJR} as the SPDE is in two dimensions. 
The technique is to generate the two dimensional Brownian path and then solve the SPDE using a 
finite element approach. Speed up could be achieved by extending the work of \cite{GR} where the
multilevel Monte Carlo approach was used for the model of \cite{BHHJR}. We will not discuss the 
numerical analysis for the model as, even in the one-dimensional case, this is challenging.

\vspace*{.1in}

There are significant mathematical challenges in extending large portfolio models to the stochastic volatility setting. A key point is to estimate the boundary behaviour of the empirical measure and, with a non-constant volatility
path, this needs a novel approach. The kernel smoothing technique used by \cite{BHHJR} also 
needs alteration to cope with this volatility process, to enable us to obtain the best possible regularity for our two-dimensional density.

In Section 2, we assume that the initial data satisfies some
reasonable exchangeability conditions in order to obtain the convergence result for the empirical
measure process as $N \rightarrow \infty$. This is not just a two-dimensional version of the 
corresponding result in \cite{BHHJR}, since it gives the convergence of the restriction of the 
empirical measure process to $(0, \infty)\times\mathbb{R}$, while it also gives the form of the limiting
measure-valued process. It includes thus a law of large numbers which is particularly important for dealing with this two-dimensional version of the large portfolio analysis problem. 

In Section~3 we extend some existing Malliavin calculus results and techniques, in order to obtain a strong norm estimate for the density of a CIR process when a component of the driving Brownian Motion (the market factor) is given. We are only able to do this under a condition on the parameters which is stronger than the Feller condition for the CIR process to not hit 0 at any positive time. This is due to the fact that the CIR process does not have Lipschitz coefficients, which means that standard Malliavin calculus techniques for proving the existence of a density of an Ito process are not directly applicable and approximations with processes having better coefficients are needed. In Section~4 we prove a convergence result for a sequence of stopped Ito processes when the sequence of volatility paths decreases pointwise to a continuous and positive path, in order to extend the results of \cite{BHHJR} to the case when the volatility path is non-constant. Combining this with the results of Section~3 in a divide-and-conquer approach, we 
obtain the existence of a regular density for the measure-valued process $v_{t,C_1}$, for any good enough value of $C_1$, and also the SPDE and the boundary condition satisfied by that density. 

In Section~5, we extend the kernel smoothing method developed in \cite{BHHJR, KX99, HL16}, by proving that the standard heat kernel maintains its smoothing and convergence properties, when it is composed with a square root function, and also in certain weighted $L^2$ spaces. This allows us to obtain differentiability of our density in the $y$-direction, and also weighted $L^2$ integrability of the derivative. This improved kernel smoothing method does not work in distribution spaces for our SPDE and thus, the regularity results of the previous two sections are crucial. Finally, in Section~6, we discuss the question of uniqueness of the solution.

\begin{rem}
(1) We will not discuss the issue of asymptotic arbitrage which can arise when there is a large portfolio limit of assets (see \cite{KKD1, KKD2}). As we are using 
the limiting model as an approximation to a large finite model, which will not admit arbitrage, the question is only of theoretical interest.

(2) When calibrating the model for pricing credit derivatives the drift term in the asset's value process is replaced by a known 
interest rate. Including more parameters than that used in \cite{BHHJR} should improve the calibration of the model and may 
allow observed features such as correlation skew in CDOs to be captured. Though we should note that even including jumps in the 
basic model of \cite{BHHJR} still makes it difficult to capture all the observed features of CDO tranche prices \cite{BR12}.

(3) One could view the empirical mean of such a model as a natural model for an index, see \cite{HV15} for the simple case. Here we would
produce a stochastic volatility model for the index and this could be used to price volatility dependent derivatives. 

(4) It would be natural to develop central limit theorems and a large deviation analysis in further work, potentially by adapting and extending appropriately the ideas of \cite{SSO1, SSO2}. For applications in systemic risk it would also be interesting to add a mean field interaction.

(5) The popular Heston model is just a simple case of the model used to describe the evolution of the asset values in our setting, which is obtained when the function $h$ is just a square root function.
\end{rem}

\section{Connection to the probabilistic solution of an SPDE}

In order to study the asymptotic behaviour of our system
of particles, some assumptions have to be made. We assume that $(\Omega,\,\mathcal{F},\,\{\mathcal{F}_{t}\}_{t\geq0},\,\mathbb{P})$
is a filtered probability space with a complete and right-continuous filtration $\{\mathcal{F}_{t}\}_{t\geq0}$,
$\left\{ \left(X_{0}^{1},\,\sigma_{0}^{1}\right),\,\left(X_{0}^{2},\,\sigma_{0}^{2}\right),\,...\right\} $
is an exchangeable sequence of $\mathcal{F}_{0}$-measurable
two-dimensional random vectors (see \cite{Aldous} for more on exchangeability), and $C_{i}=\left(k_{i},\,\theta_{i},\,\xi_{i},\,r_{i},\,\rho_{1,i},\,\rho_{2,i}\right)$
for $i\in\mathbb{N}$ are i.i.d $\mathcal{F}_{0}$-measurable random
vectors in $\mathbb{R}_{+}^{6}$, independent from each $(X_{0}^{j},\,\sigma_{0}^{j})$,
such that $\mathbb{P}$- almost surely we have both $k_{i}\theta_{i}>\frac{3}{4}\xi_{i}^{2}$ and $\rho_{1,i},\,\rho_{2,i} \in (-1, 1)$. We note that the condition on $k_i, \theta_i, \xi_i$ is stronger than the usual Feller condition that ensures that 0 is not reached by a CIR process in finite time.
We also consider an infinite sequence $\{W_{t}^{0},\,B_{t}^{0},\,W_{t}^{1},\,B_{t}^{1},\,W_{t}^{2},\,B_{t}^{2},\,...\}$
of $\mathcal{F}_{t}$ - adapted standard Brownian
motions, in which only $W_{t}^{0}$ and $B_{t}^{0}$ are correlated and their correlation coefficient is denoted by $\rho_{3}$. Under these assumptions and for each $N\in\mathbb{N}$, we
consider the interacting particle system described by equations \eqref{eq:model}
and the corresponding empirical measure processes $v_{t}^{N}$ and
$v_{1,t}^{N}$ given by \eqref{eq:1.5} and \eqref{eq:1.6} respectively. We also
define $v_{2,t}^{N}=v_{t}^{N}-v_{1,t}^{N}$, the restriction of $v_{t}^{N}$
to $\{0\}\times\mathbb{R}$, for all $t\geq0$.

We start with the following convergence theorem, the proof of which is a simple modification of the convergence theorem for the one-dimensional empirical measure process in \cite{BHHJR} and can be found in the Appendix. It is stronger than a convergence result for the empirical measure process $v_{t}^{N}$ but we need it for proving Theorem~\ref{thm:2.3}, a crucial result for establishing the convergence of the $\{0\}\times\mathbb{R}$ - supported component $v_{2,t}^{N}$ (Theorem~\ref{thm:2.5}).

\begin{thm}\label{thm:2.1}
  For each $N\in\mathbb{N}$ and any $t,\,s\geq0$, consider the random measure given by
\[
v_{3,t,s}^{N}=\frac{1}{N}\sum_{i=1}^{N}\delta_{X_{t}^{i},\sigma_{t}^{i},\sigma_{s}^{i}}.
\]
The sequence $v_{3,t,s}^{N}$ of three-dimensional empirical measures
converges weakly to some measure $v_{3,t,s}$ for all $t,\,s\geq0$,
$\mathbb{P}$-almost surely. Moreover, the measure-valued process
$\{v_{3,t,s}:\,t,\,s\geq0\}$ is $\mathbb{P}$-almost surely continuous
in both $t$ and $s$ under the weak topology.
\end{thm}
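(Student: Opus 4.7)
The plan is to combine exchangeability with a conditional strong law of large numbers, and then extend from fixed $(t,s)$ to joint weak continuity by an equicontinuity estimate. Under the hypotheses, the sequence $\{(X_\cdot^i,\sigma_\cdot^i)\}_{i\in\mathbb{N}}$ is exchangeable in $i$: the initial data $(X_0^i,\sigma_0^i)$ is exchangeable, the parameter vectors $C_i$ and the idiosyncratic Brownian motions $(W^i,B^i)$ are i.i.d.\ across $i$, and all of these are independent of the shared systemic drivers $(W^0,B^0)$. Hence, for each fixed $t,s\geq 0$, the triples $(X_t^i,\sigma_t^i,\sigma_s^i)$ form an exchangeable sequence in $\mathbb{R}^3$, and after conditioning on the $\sigma$-algebra $\mathcal{H}$ generated by $(W^0_\cdot,B^0_\cdot)$ together with all the initial data and parameters, they are i.i.d.

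First, I would fix $(t,s)$ and a bounded continuous test function $f:\mathbb{R}^3\to\mathbb{R}$. The conditional strong law of large numbers gives
\[
\frac{1}{N}\sum_{i=1}^{N}f(X_t^i,\sigma_t^i,\sigma_s^i) \longrightarrow \mathbb{E}[f(X_t^1,\sigma_t^1,\sigma_s^1)\mid \mathcal{H}]
\]
$\mathbb{P}$-a.s.\ as $N\to\infty$. Choosing a countable separating family of bounded continuous $f$ for weak convergence on $\mathbb{R}^3$ and intersecting the corresponding full-measure events yields a.s.\ weak convergence of $v_{3,t,s}^N$ to the random limit measure $v_{3,t,s}:=\mathbb{P}((X_t^1,\sigma_t^1,\sigma_s^1)\in\cdot\mid\mathcal{H})$ for each fixed $(t,s)$; applied on a countable dense set $D\subseteq[0,\infty)^2$ it gives simultaneous a.s.\ weak convergence on $D$.

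To upgrade to convergence and continuity at every $(t,s)$, I would establish an equicontinuity estimate. For any Lipschitz bounded $f$,
\[
\bigl|\langle v_{3,t_1,s_1}^N-v_{3,t_2,s_2}^N,f\rangle\bigr|\leq\frac{\|f\|_{\mathrm{Lip}}}{N}\sum_{i=1}^{N}\bigl(|X_{t_1}^i-X_{t_2}^i|+|\sigma_{t_1}^i-\sigma_{t_2}^i|+|\sigma_{s_1}^i-\sigma_{s_2}^i|\bigr),
\]
and the expectation of the right-hand side reduces, by exchangeability, to $3\|f\|_{\mathrm{Lip}}$ times a uniform $L^1$ modulus of continuity of $X_\cdot^1$ and $\sigma_\cdot^1$. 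Standard SDE estimates give such a modulus tending to zero as $|t_1-t_2|+|s_1-s_2|\to 0$, uniformly over compacts. Combined with the a.s.\ convergence on $D$, this equicontinuity extends the convergence to every $(t,s)\in[0,\infty)^2$ and produces a jointly continuous limiting measure-valued process under the weak topology.

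The main obstacle is the time-regularity estimate for the CIR component $\sigma_\cdot^i$: because the diffusion coefficient $\sqrt{\sigma}$ is only $1/2$-H\"older, the classical Lipschitz-based bound is not directly available, and the argument requires control of negative moments of $\sigma_t^i$. This is precisely where the strengthened Feller condition $k_i\theta_i>\tfrac{3}{4}\xi_i^2$ enters: it secures uniform-in-$t$ bounds on $\mathbb{E}[(\sigma_t^i)^{-p}]$ for $p$ large enough to push the time-continuity estimate through. Beyond this technical point, the proof is a direct two-dimensional adaptation of the one-dimensional convergence theorem of \cite{BHHJR}.
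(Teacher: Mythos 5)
Your identification of the limit via a conditional law of large numbers is close in spirit to the paper's Theorem~2.3, but as a proof of Theorem~2.1 your argument has two genuine problems. First, the conditioning $\sigma$-algebra you choose is wrong: if you condition on $(W^0_\cdot,B^0_\cdot)$ together with \emph{all} the initial data and parameters, the triples $(X_t^i,\sigma_t^i,\sigma_s^i)$ are independent but no longer identically distributed (each keeps its own frozen $(X_0^i,\sigma_0^i,C_i)$), so the conditional SLLN does not produce the common limit $\mathbb{E}[f(X_t^1,\sigma_t^1,\sigma_s^1)\mid\mathcal{H}]$ you write down. The correct conditioning, as in the paper, is on $(W^0_\cdot,B^0_\cdot)$ and the de~Finetti $\sigma$-algebra $\mathcal{G}$ of the exchangeable initial sequence, given which the triples genuinely are i.i.d. Second, and more seriously, your passage from the countable dense set $D$ to all $(t,s)$ does not go through as written: your increment bound is only an estimate in expectation at a fixed pair of time points, whereas what is needed is an almost-sure modulus of continuity of $\langle v_{3,t,s}^N,f\rangle$ that is uniform in $N$ on the same full-measure event where convergence on $D$ holds. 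An $L^1$ bound at fixed times controls neither the supremum over nearby time pairs nor the behaviour along the whole sequence in $N$; to repair the step you would need, for example, to apply the conditional LLN to the path-modulus variables $\sup_{|u-u'|\le\delta,\,u,u'\le T}|\sigma_u^i-\sigma_{u'}^i|$ (and similarly for $X^i$) and then let $\delta\to0$ using continuity of paths, or run a Kolmogorov-type chaining argument. Incidentally, no negative moments of $\sigma$ and no use of $k_i\theta_i>\frac{3}{4}\xi_i^2$ are needed for such an $L^1$ modulus (It\^o isometry and first moments suffice); that condition plays no role in Section~2 of the paper.

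The paper avoids all of these issues by working directly on path space: the pairs $(X_\cdot^i,\sigma_\cdot^i)$ form an exchangeable sequence of $C([0,T];\mathbb{R}^2)$-valued random variables, de~Finetti's theorem gives almost-sure weak convergence of the path-level empirical measures $v_*^N$, and since each evaluation map $P_{t,s}$ is continuous, testing against $f\circ P_{t,s}$ yields convergence of $v_{3,t,s}^N$ for \emph{all} $(t,s)$ simultaneously on a single full-measure event, with continuity in $(t,s)$ following from path continuity and the Portmanteau theorem --- no moment estimates, no time-regularity estimates, and no separate identification of the limit are required at this stage (the identification is deferred to Theorem~2.3).
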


The convergence result for $v_{t}^{N}$ is a direct consequence of the above theorem and it is given in the following corollary.

\begin{cor}\label{cor:2.2}
The sequence $v_{t}^{N}$ of two-dimensional empirical measures given
by $(1.3)$ converges weakly to some measure $v_{t}$ for all $t\geq0$,
$\mathbb{P}$-almost surely. Moreover, the path $\{v_{t}:\,t\geq0\}$
is $\mathbb{P}$-almost surely continuous under the weak topology.
The measure-valued process $v_{t}$ is the restriction of
$v_{3,t,s}$ to the space of functions which are constant in the third
variable, for any $t\geq0$.
\end{cor}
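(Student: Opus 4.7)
The plan is to derive Corollary~\ref{cor:2.2} directly from Theorem~\ref{thm:2.1} by projecting out the third coordinate. Specifically, I would \emph{define} $v_t$ to be the pushforward of $v_{3,t,s}$ onto the first two coordinates, for any chosen $s\geq 0$ (say $s=t$); this agrees with ``the restriction of $v_{3,t,s}$ to the space of functions which are constant in the third variable.'' Consistency of the definition, i.e.\ independence of the choice of $s$, is automatic: the first-two-coordinate marginal of $v_{3,t,s}^{N}$ equals $v_{t}^{N}$ identically in $s$, so the limiting marginals must coincide for all $s$.

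For the weak convergence $v_t^N\to v_t$, I would simply lift test functions. Given $f\in C_b(\mathbb{R}^2)$, set $\tilde f(x,y,z):=f(x,y)$, so that $\tilde f\in C_b(\mathbb{R}^3)$, and observe
\[
\int f\, dv_{t}^{N} \;=\; \frac{1}{N}\sum_{i=1}^{N} f\!\left(X_{t}^{i},\sigma_{t}^{i}\right) \;=\; \int \tilde f\, dv_{3,t,s}^{N}.
\]
By Theorem~\ref{thm:2.1} the right-hand side converges $\mathbb{P}$-a.s.\ to $\int \tilde f\, dv_{3,t,s}$, which is exactly $\int f\, dv_t$ by definition. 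Since $C_b(\mathbb{R}^2)$ is a convergence-determining class on $\mathbb{R}^2$, this gives weak convergence of $v_{t}^{N}$ to $v_t$.

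For continuity of $t\mapsto v_t$ under the weak topology, I would use that the pushforward under a continuous map is continuous on the space of Borel probability measures endowed with the weak topology. Composing the diagonal embedding $t\mapsto (t,t)$ with the $\mathbb{P}$-a.s.\ weakly continuous map $(t,s)\mapsto v_{3,t,s}$ supplied by Theorem~\ref{thm:2.1}, and then with the first-two-coordinate projection, yields $\mathbb{P}$-a.s.\ weak continuity of $t\mapsto v_t$.

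No serious obstacle arises here; the argument is essentially a routine marginalisation once Theorem~\ref{thm:2.1} is in hand. The only point requiring a little care is that a single $\mathbb{P}$-null exceptional set suffices uniformly in $t$ (and in the auxiliary parameter $s$), but this is precisely the form in which Theorem~\ref{thm:2.1} has been stated.
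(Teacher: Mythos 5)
Your proposal is correct and follows essentially the same route as the paper: the paper's proof is exactly the observation that $v_t^N$ is obtained from $v_{3,t,s}^N$ by testing against functions constant in the third variable, and then letting $N\to\infty$ via Theorem~\ref{thm:2.1}, with continuity of $t\mapsto v_t$ inherited from the joint weak continuity of $(t,s)\mapsto v_{3,t,s}$. Your additional remarks (independence of the auxiliary $s$, lifting $f$ to $\tilde f$, a single null set working for all $t$) just make explicit what the paper leaves implicit.
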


\begin{proof}
Since $v_{t}^{N}$ is the restriction of $v_{3,t,s}^{N}$ to the space
of functions which are constant in the third variable, the result follows
by testing the measure against such functions and by taking $N\rightarrow\infty$.
\end{proof}
 
Next, we prove a theorem which gives us the form of the
weak limits of the empirical measures $v_{3,t,s}$.

\begin{thm}\label{thm:2.3}
There exists an $\Omega_{0}\subset\Omega$ with $\mathbb{P}(\Omega_{0})=1$
such that for any $\omega\in\Omega_{0}$, we have $\int_{\mathbb{R}^{2}}f dv_{3,t,s}=\mathbb{E}\left[f\left(X_{t}^{1},\,\sigma_{t}^{1},\,\sigma_{s}^{1}\right)|\,W_{.}^{0},\,B_{.}^{0},\,\mathcal{G}\right]$
for any $t,\,s\geq0$ and any $f\in C_{b}(\mathbb{R}^{3};\,\mathbb{R})$,
where $\mathcal{G}$ is some $\sigma$-algebra contained in $\mathcal{F}_{0}$.
\end{thm}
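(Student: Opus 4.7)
\bigskip

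\noindent\textbf{Proof plan for Theorem~\ref{thm:2.3}.}

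The plan is to identify an appropriate $\sigma$-algebra $\mathcal{G}\subset\mathcal{F}_0$ that trivialises the exchangeability of the initial data and of the parameter vectors, then to use conditional independence plus a conditional strong law of large numbers to identify the weak limit $v_{3,t,s}$. First I would let $\mathcal{G}$ be the tail $\sigma$-algebra of the sequence $\{(X_0^i,\sigma_0^i,C_i)\}_{i\geq 1}$, augmented with $\mathbb{P}$-null sets. Since the $C_i$ are i.i.d.\ and $\{(X_0^i,\sigma_0^i)\}$ is exchangeable and independent of the $C_i$, de Finetti's theorem (as stated in \cite{Aldous}) implies that conditionally on $\mathcal{G}$ the triples $(X_0^i,\sigma_0^i,C_i)$ are i.i.d. Moreover, the idiosyncratic Brownian motions $W^i,B^i$ are independent of $\mathcal{G}$, of $(W^0,B^0)$ and of each other, so conditionally on the $\sigma$-algebra $\mathcal{H}:=\sigma(W^0_\cdot,B^0_\cdot,\mathcal{G})$ the driving data $\big((X_0^i,\sigma_0^i,C_i),W^i,B^i\big)$ are i.i.d. across~$i$. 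Because the SDE system \eqref{eq:model} for the $i$-th particle depends only on these data and on the common factors $(W^0,B^0)$, standard strong existence/uniqueness arguments give the $(X^i,\sigma^i)$ as measurable functionals of this data, hence the triples $(X^i_t,\sigma^i_t,\sigma^i_s)$ are i.i.d.\ conditionally on $\mathcal{H}$ for every fixed pair $(t,s)$.

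Second, fix $f\in C_b(\mathbb{R}^3;\mathbb{R})$ and fixed rational $t,s\geq 0$. Conditional on $\mathcal{H}$ the random variables $f(X_t^i,\sigma_t^i,\sigma_s^i)$ are i.i.d.\ and bounded, so the conditional strong law of large numbers (Kolmogorov, applied under the regular conditional probability given~$\mathcal{H}$) yields
\[
\frac{1}{N}\sum_{i=1}^N f(X_t^i,\sigma_t^i,\sigma_s^i)\;\longrightarrow\;\mathbb{E}\!\left[f(X_t^1,\sigma_t^1,\sigma_s^1)\,\big|\,W^0_\cdot,B^0_\cdot,\mathcal{G}\right]
\]
almost surely. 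Combining this with Theorem~\ref{thm:2.1}, which already supplies the a.s.\ weak limit $v_{3,t,s}$, we obtain that for any such $f,t,s$ the limit $\int f\,dv_{3,t,s}$ coincides $\mathbb{P}$-a.s.\ with the conditional expectation on the right-hand side.

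Third, I would remove the quantifier order by the standard separability trick. Choose a countable $\mathcal{D}\subset C_b(\mathbb{R}^3;\mathbb{R})$ that is convergence-determining for weak convergence of probability measures on $\mathbb{R}^3$ (for instance a countable subset of $C_c^\infty$ separating points), and let $\mathcal{Q}=\mathbb{Q}_{\geq 0}^2$. Taking the countable union of the $\mathbb{P}$-null sets produced above as $(f,t,s)$ ranges over $\mathcal{D}\times\mathcal{Q}$ gives a single event $\Omega_0$ of full measure on which the identity holds for every $f\in\mathcal{D}$ and every rational $(t,s)$. The extension from rational to real $(t,s)$ uses the a.s.\ weak continuity of $(t,s)\mapsto v_{3,t,s}$ supplied by Theorem~\ref{thm:2.1} together with the a.s.\ continuity of $(t,s)\mapsto \mathbb{E}[f(X_t^1,\sigma_t^1,\sigma_s^1)\mid W^0_\cdot,B^0_\cdot,\mathcal{G}]$ (which follows from the a.s.\ continuity of the paths $(X^1,\sigma^1)$, boundedness of $f$, and dominated convergence for conditional expectations). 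The extension from $\mathcal{D}$ to arbitrary $f\in C_b(\mathbb{R}^3;\mathbb{R})$ is then immediate from the convergence-determining property.

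The main obstacle I expect is the careful construction of $\mathcal{G}$ and the verification that $(X^i,\sigma^i)$ really are conditionally i.i.d.\ given $\mathcal{H}$, because the driving coefficients $C_i$ are themselves random and the CIR component has only H\"older-continuous coefficients; strong pathwise uniqueness for the CIR/SDE system (via Yamada--Watanabe) is what guarantees that the solution map from the driving data to the paths is measurable, and this is the place where the standing assumptions $k_i\theta_i>\tfrac{3}{4}\xi_i^2$ and $\rho_{1,i},\rho_{2,i}\in(-1,1)$ need to be used. Once that measurability is in place, the rest of the argument is the conditional SLLN plus a routine separability extension along the lines of \cite{BHHJR}.
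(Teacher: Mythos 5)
Your proposal is correct and follows essentially the same route as the paper: condition on a $\sigma$-algebra furnished by de Finetti/exchangeability so that the triples $\left(X_{t}^{i},\sigma_{t}^{i},\sigma_{s}^{i}\right)$ become i.i.d.\ under $\mathbb{P}(\,\cdot\,|\,W_{\cdot}^{0},B_{\cdot}^{0},\mathcal{G})$ (being measurable functionals of conditionally i.i.d.\ driving data), apply a conditional strong law of large numbers for fixed $f$ and rational $(t,s)$, then extend in $(t,s)$ via the continuity from Theorem~\ref{thm:2.1} together with dominated convergence, and in $f$ via a countable family of test functions. The only cosmetic differences are that the paper obtains $\mathcal{G}$ from exchangeability of the initial pairs while treating the i.i.d.\ $C_i$ directly (rather than the tail $\sigma$-algebra of the triples), and closes the argument with a countable dense family in $C_{b}$ rather than a convergence-determining class; neither affects the substance.
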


\begin{proof}
By the exchangeability of the initial data, we know that
there exists a $\sigma$-algebra $\mathcal{G}$ contained in $\mathcal{F}_{0}$,
such that the two-dimensional vectors: $\left(X_{0}^{1},\,\sigma_{0}^{1}\right),\,\left(X_{0}^{2},\,\sigma_{0}^{2}\right),\,...$
are i.i.d given $\mathcal{G}$. Moreover, $\left(B_{\cdot}^{k},\,W_{\cdot}^{k},\,C_{k}\right)$
for $k\in\mathbb{N}$ are i.i.d and since they are also independent
from $\left(B_{\cdot}^{0},\,W_{\cdot}^{0},\,\mathcal{G}\right)$,
they are also i.i.d. under the probability measure $\mathbb{P}(\,\cdot\,|\,W^{0},\,B^{0},\,\mathcal{G})$.
The same holds for the two-dimensional vectors $\left(X_{0}^{1},\,\sigma_{0}^{1}\right),\,\left(X_{0}^{2},\,\sigma_{0}^{2}\right),\,...$,
since they are i.i.d given $\mathcal{G}$ and measurable with respect
to the bigger $\sigma$-algebra $\mathcal{F}_{0}$, with $\left(W_{\cdot}^{0},\,B_{\cdot}^{0}\right)$
being independent from $\mathcal{F}_{0}$. Thus, noting that there is a function $g$ such that
\[
\left(X_{t}^{k},\,\sigma_{t}^{k},\,\sigma_{s}^{k}\right)=g\left(t,\,s,\,B_{\cdot}^{k},\,W_{\cdot}^{k},\,B_{\cdot}^{0},\,W_{\cdot}^{0},\,C_{k},\,X_{0}^{k},\,\sigma_{0}^{k}\right)
\]
it follows that $\left(X_{t}^{k},\sigma_{t}^{k},\sigma_{s}^{k}\right)$ for $k\in\mathbb{N}$ are also i.i.d. random vectors under $\mathbb{P}(\,\cdot\,|W^{0},B^{0},\mathcal{G})$.

Thus, for any $f\in C_b(\mathbb{R}^3;\mathbb{R})$ we have
\begin{eqnarray*}
1&\geq &\mathbb{P}\left(\int_{\mathbb{R}^{2}}f dv_{3,t,s}^{N}\rightarrow\mathbb{E}\left[f\left(X_{t}^{1},\sigma_{t}^{1},\,\sigma_{s}^{1}\right)|W_{\cdot}^{0},B_{\cdot}^{0},\,\mathcal{G}\right]\:\forall\,t,\,s\in\mathbb{Q}^{+}\right) \\
&=& \mathbb{E}\left[\mathbb{P}\left(\int_{\mathbb{R}^{2}}f dv_{3,t,s}^{N}\rightarrow\mathbb{E}\left[f\left(X_{t}^{1},\sigma_{t}^{1},\sigma_{s}^{1}\right)|\,W_{\cdot}^{0},\,B_{\cdot}^{0},\,\mathcal{G}\right]\:\forall\,t,\,s\in\mathbb{Q}^{+}|W_{\cdot}^{0},B_{\cdot}^{0},\mathcal{G}\right)\right] \\
&=& \mathbb{E}\left[1-\mathbb{P}\left({\displaystyle \cup_{t,s\in\mathbb{Q}^{+}}}\left\{ \int_{\mathbb{R}^{2}}f dv_{3,t,s}^{N}\nrightarrow\mathbb{E}\left[f\left(X_{t}^{1},\sigma_{t}^{1},\sigma_{s}^{1}\right)|W_{\cdot}^{0},B_{\cdot}^{0},\mathcal{G}\right]\right\} |W_{\cdot}^{0},B_{\cdot}^{0},\mathcal{G}\right)\right] \\
&\geq &\mathbb{E}\left[1-\sum_{t,s\in\mathbb{Q}^{+}}\mathbb{P}\left(\int_{\mathbb{R}^{2}}f dv_{3,t,s}^{N}\nrightarrow\mathbb{E}\left[f\left(X_{t}^{1},\sigma_{t}^{1},\,\sigma_{s}^{1}\right)|W_{\cdot}^{0},B_{\cdot}^{0},\mathcal{G}\right]|W_{\cdot}^{0},B_{\cdot}^{0},\mathcal{G}\right)\right]=1,
\end{eqnarray*}
where, in the last expectation, by the strong law of large numbers, for each $t,s$ the probability that there is no convergence is zero.
Hence, there is an $\Omega^{f}\subset\Omega$ (depending on $f$)
with $\mathbb{P}\left(\Omega^{f}\right)=1$, such that
\[
\int_{\mathbb{R}^{2}}f dv_{3,t,s}^{N}\rightarrow\mathbb{E}\left[f\left(X_{t}^{1},\,\sigma_{t}^{1},\,\sigma_{s}^{1}\right)|\,W_{\cdot}^{0},\,B_{\cdot}^{0},\,\mathcal{G}\right]\:\forall\,t,\,s\in\mathbb{Q}^{+}
\]
as $N\rightarrow\infty$, for all $\omega\in\Omega^{f}$.

If we denote by $\Omega_{0}^{f}$ the intersection of $\Omega^{f}$
with the set of events for which the results of Theorem \ref{thm:2.1} hold,
we see that $\mathbb{P}\left(\Omega_{0}^{f}\right)=1$ and that for
all $\omega\in\Omega_{0}^{f}$ we have 
\begin{equation}\label{eq:2.1}
\int_{\mathbb{R}^{2}}f dv_{3,t,s}=\mathbb{E}\left[f\left(X_{t}^{1},\,\sigma_{t}^{1},\,\sigma_{s}^{1}\right)|\,W_{.}^{0},\,B_{.}^{0},\,\mathcal{G}\right]
\end{equation}
for any $t,\,s\in\mathbb{Q}^{+}$. Since both quantities in $\eqref{eq:2.1}$ are continuous
in $(t,\,s)$ (this follows from Theorem 2.1 for the LHS, and by using
the dominated convergence theorem for the RHS) and since they coincide
for any $t,\,s\in\mathbb{Q}^{+}$, we conclude that they coincide
for all $t,\,s\geq0$ in $\Omega_{0}^{f}$. 

Finally, taking the intersection of all $\Omega_{0}^{p}$
for all $p$ belonging to a countable and dense subset $D$ of $C_{b}\left(\mathbb{R}^{3};\,\mathbb{R}\right)$,
we obtain the desired set $\Omega_{0}$. This follows from the fact
that both quantities in $\eqref{eq:2.1}$ are bounded functionals in $C_{b}\left(\mathbb{R}^{3};\,\mathbb{R}\right)$
with the supremum norm, where for the LHS this follows by taking limits
in the obvious inequality, $\int_{\mathbb{R}^{2}}f dv_{3,t,s}^{n}\leq||f||_{\infty},\:\forall\,n\in\mathbb{N}$.
Our proof is now complete.
\end{proof}

\begin{cor}\label{cor:2.4}
Let $\{v_{t}:\,t\geq0 \}$ be the measure-valued process defined in Corollary~\ref{cor:2.2}. 
There exists an $\Omega_{0}\subset\Omega$ with $\mathbb{P}(\Omega_{0})=1$
such that for any $\omega\in\Omega_{0}$, we have $\int_{\mathbb{R}^{2}}f dv_{t}=\mathbb{E}\left[f\left(X_{t}^{1},\,\sigma_{t}^{1}\right)|\,W_{.}^{0},\,B_{.}^{0},\,\mathcal{G}\right]$
for any $t\geq0$ and for any $f\in C_{b}\left(\mathbb{R}^{3};\,\mathbb{R}\right)$,
where $\mathcal{G}$ is the $\sigma$-algebra defined in Theorem~\ref{thm:2.3}.
\end{cor}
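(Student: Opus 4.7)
The plan is to reduce Corollary~\ref{cor:2.4} to Theorem~\ref{thm:2.3} by a trivial lifting of test functions from $\mathbb{R}^{2}$ to $\mathbb{R}^{3}$. Given any $f\in C_{b}\left(\mathbb{R}^{2};\,\mathbb{R}\right)$, define $\tilde{f}\in C_{b}\left(\mathbb{R}^{3};\,\mathbb{R}\right)$ by $\tilde{f}(x,y,z)=f(x,y)$. By the construction in Corollary~\ref{cor:2.2}, $v_{t}^{N}$ is the restriction of $v_{3,t,s}^{N}$ to functions that are constant in the third coordinate, so for every $N$ and every $s\geq 0$ we have
\[
\int_{\mathbb{R}^{2}}f\,dv_{t}^{N}=\int_{\mathbb{R}^{3}}\tilde{f}\,dv_{3,t,s}^{N}.
\]
Passing to the limit $N\to\infty$ on both sides, using the weak convergence in Theorem~\ref{thm:2.1} on the right and in Corollary~\ref{cor:2.2} on the left, gives $\int_{\mathbb{R}^{2}}f\,dv_{t}=\int_{\mathbb{R}^{3}}\tilde{f}\,dv_{3,t,s}$ for all $\omega$ in the full-measure set where both limits exist.

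Next I would invoke Theorem~\ref{thm:2.3}: on its full-measure set $\Omega_{0}$, the right-hand side equals
\[
\mathbb{E}\!\left[\tilde{f}\!\left(X_{t}^{1},\sigma_{t}^{1},\sigma_{s}^{1}\right)\big|\,W_{\cdot}^{0},B_{\cdot}^{0},\mathcal{G}\right]=\mathbb{E}\!\left[f\!\left(X_{t}^{1},\sigma_{t}^{1}\right)\big|\,W_{\cdot}^{0},B_{\cdot}^{0},\mathcal{G}\right]
\]
for every $t,s\geq 0$, the last equality being immediate because $\tilde{f}$ does not depend on its third argument. Combining the two displays and choosing, say, $s=t$ gives the claimed identity for every $t\geq 0$ on $\Omega_{0}$.

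The only subtlety is that $\Omega_{0}$ in Theorem~\ref{thm:2.3} is constructed so that the identity holds simultaneously for every $f$ in a countable dense subset $D\subset C_{b}(\mathbb{R}^{3};\mathbb{R})$ and every rational $t,s$, and is then extended to all $f$ and all $t,s$ by density/continuity. Here I can take the same $\Omega_{0}$: the extension $f\mapsto\tilde{f}$ is an isometric embedding of $C_{b}(\mathbb{R}^{2};\mathbb{R})$ into $C_{b}(\mathbb{R}^{3};\mathbb{R})$, so a countable dense family in the former lifts to one in the latter, and both sides of the desired identity are bounded $\|\cdot\|_{\infty}$-Lipschitz functionals of $f$ (the left by $|\int f\,dv_{t}|\leq\|f\|_{\infty}$ and the right by the conditional Jensen inequality). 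Hence no separate density argument is required; the corollary is essentially an immediate corollary of Theorem~\ref{thm:2.3}, and there is no genuine obstacle beyond book-keeping.
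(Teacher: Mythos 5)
Your proof is correct and follows essentially the same route as the paper, which simply tests $v_{3,t,s}$ against functions constant in the third variable and invokes Corollary~\ref{cor:2.2} together with Theorem~\ref{thm:2.3}; your lifting $f\mapsto\tilde f$ is exactly that observation spelled out in detail. The extra care you take about the full-measure set is harmless but unnecessary, since the set $\Omega_{0}$ of Theorem~\ref{thm:2.3} is already constructed to make the identity hold for all $f\in C_{b}(\mathbb{R}^{3};\mathbb{R})$ and all $t,s\geq0$, and it is contained in the set where the conclusions of Theorem~\ref{thm:2.1} (hence Corollary~\ref{cor:2.2}) hold.
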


\begin{proof}
This result follows by testing the measure against functions which are constant in the
third variable and by recalling Corollary~\ref{cor:2.2}.
\end{proof}

The above corollary completes the convergence result for
$v_{t}^{N}$ which was given in Corollary~\ref{cor:2.2}. However, what we need is a similar result for its restriction to $\{0\}\times\mathbb{R}$, that is $v_{2,t}^{N}$. This is given in the following Theorem, the proof of which is based on the more general convergence result given in Theorem~\ref{thm:2.1} and Theorem~\ref{thm:2.3}.
\begin{thm}\label{thm:2.5}
There exists a measure-valued process $\{v_{2,t}:\,t\geq0\}$
and an $\Omega_{1}\subset\Omega_{0}$ with $\mathbb{P}(\Omega_{1})=1$,
such that for any $\omega\in\Omega_{0}$ we have that $v_{2,t}^{N}\rightarrow v_{2,t}$
weakly for all $t\geq0$. Moreover, we have $\int_{\mathbb{R}^{2}}f dv_{2,t}=\mathbb{E}\left[f\left(X_{t}^{1},\,\sigma_{t}^{1}\right)\mathbb{I}_{\{T_{1}<t\}}|\,W_{.}^{0},\,B_{.}^{0},\,\mathcal{G}\right]$
for all $t\geq0$ and for all $f\in C_{b}\left(\mathbb{R}^{2};\,\mathbb{R}\right)$,
where $\mathcal{G}$ is the $\sigma$-algebra defined in Theorem~\ref{thm:2.3}.
\end{thm}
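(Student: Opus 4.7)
The plan is to deduce the result from the exact decomposition $v_{2,t}^N = v_t^N - v_{1,t}^N$ together with the weak convergence of $v_t^N$ already supplied by Corollary~\ref{cor:2.4}. Since $X_t^i \equiv 0$ on $\{T_i \leq t\}$ and $X_t^i > 0$ on $\{T_i > t\}$ (by continuity of $X^i$ up to $T_i$ and absorption thereafter), for any $f \in C_b(\mathbb{R}^2)$ we may write
\[
\int f\,dv_{1,t}^N = \frac{1}{N}\sum_{i=1}^N f(X_t^i,\sigma_t^i)\mathbb{I}_{\{X_t^i>0\}}, \qquad \int f\,dv_{2,t}^N = \frac{1}{N}\sum_{i=1}^N f(0,\sigma_t^i)\mathbb{I}_{\{X_t^i=0\}}.
\]
The main obstacle is that the indicators $\mathbb{I}_{\{x>0\}}$ and $\mathbb{I}_{\{x=0\}}$ are discontinuous at $0$, so Corollary~\ref{cor:2.4} cannot be applied to the two pieces individually. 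I resolve this with a sandwich by continuous cutoffs, combined with the exact additivity of the pieces.

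For $\epsilon > 0$, let $\tilde g_\epsilon \in C_b(\mathbb{R})$ equal $0$ for $x \leq 0$, equal $1$ for $x \geq \epsilon$, and be linear in between; set $\bar g_\epsilon := 1 - \tilde g_\epsilon$, so $\bar g_\epsilon(0) = 1$ and $\bar g_\epsilon(x) = 0$ for $x \geq \epsilon$. For non-negative $f \in C_b(\mathbb{R}^2)$, both $f\tilde g_\epsilon$ and $f\bar g_\epsilon$ lie in $C_b(\mathbb{R}^2)$, so Corollary~\ref{cor:2.4} gives, almost surely as $N\to\infty$,
\[
\int f\tilde g_\epsilon\,dv_t^N \to \mathbb{E}\bigl[f(X_t^1,\sigma_t^1)\tilde g_\epsilon(X_t^1) \mid W_\cdot^0, B_\cdot^0, \mathcal{G}\bigr],
\]
and analogously for $\bar g_\epsilon$. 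Since $\tilde g_\epsilon \leq \mathbb{I}_{\{x>0\}}$ and $\bar g_\epsilon \geq \mathbb{I}_{\{x=0\}}$ on $[0,\infty)$, we have $\int f\tilde g_\epsilon\,dv_t^N \leq \int f\,dv_{1,t}^N$ and $\int f\bar g_\epsilon\,dv_t^N \geq \int f\,dv_{2,t}^N$ for every $N$. Passing first to $N\to\infty$ and then to $\epsilon \downarrow 0$ (using dominated convergence on the conditional expectations, since $\tilde g_\epsilon(X_t^1) \to \mathbb{I}_{\{T_1 > t\}}$ and $\bar g_\epsilon(X_t^1) \to \mathbb{I}_{\{T_1 \leq t\}}$ pointwise, with uniform bound $1$) yields
\[
\liminf_{N\to\infty} \int f\,dv_{1,t}^N \geq \mathbb{E}\bigl[f(X_t^1,\sigma_t^1)\mathbb{I}_{\{T_1>t\}} \mid W_\cdot^0,B_\cdot^0,\mathcal{G}\bigr],
\]
\[
\limsup_{N\to\infty} \int f\,dv_{2,t}^N \leq \mathbb{E}\bigl[f(X_t^1,\sigma_t^1)\mathbb{I}_{\{T_1 \leq t\}} \mid W_\cdot^0,B_\cdot^0,\mathcal{G}\bigr].
\]

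The identity $\int f\,dv_t^N = \int f\,dv_{1,t}^N + \int f\,dv_{2,t}^N$ combined with Corollary~\ref{cor:2.4} forces the left side to converge to $\mathbb{E}[f(X_t^1,\sigma_t^1) \mid W_\cdot^0, B_\cdot^0, \mathcal{G}]$, which equals the sum of the two right-hand sides above. Since the total is attained while each piece lies on the appropriate side of its bound, both inequalities must be equalities and both sequences converge almost surely to the identified conditional expectations. The restriction to $f \geq 0$ is removed by the decomposition $f = f^+ - f^-$. To upgrade pointwise-in-$f$ convergence to weak convergence of the random measures, I intersect the exceptional null sets along a countable convergence-determining family of test functions in $C_b(\mathbb{R}^2)$, producing a single $\Omega_1 \subset \Omega_0$ of full measure, and then extend to all of $C_b(\mathbb{R}^2)$ by uniform approximation together with the uniform bound $v_{2,t}^N(\mathbb{R}^2) \leq 1$. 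The limit $v_{2,t}$ is the random measure characterized by $\int f\,dv_{2,t} = \mathbb{E}[f(X_t^1,\sigma_t^1)\mathbb{I}_{\{T_1 \leq t\}} \mid W_\cdot^0,B_\cdot^0,\mathcal{G}]$; this coincides with the form stated in the theorem, with $\mathbb{I}_{\{T_1 < t\}}$ in place of $\mathbb{I}_{\{T_1 \leq t\}}$, because $\mathbb{P}(T_1 = t \mid W_\cdot^0,B_\cdot^0,\mathcal{G}) = 0$ almost surely for each fixed $t > 0$: conditional on the market factors and $\mathcal{G}$, the idiosyncratic Brownians $W^1, B^1$ remain free and the diffusion $X^1$ is non-degenerate, so its hitting time of $0$ carries no atoms.
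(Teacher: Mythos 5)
Your reduction to Corollary~\ref{cor:2.4} does not close: the squeeze step is vacuous. Because $\bar g_\epsilon = 1-\tilde g_\epsilon$ and $\int f\,dv_{2,t}^{N}=\int f\,dv_{t}^{N}-\int f\,dv_{1,t}^{N}$, the two bounds you derive, $\liminf_{N}\int f\,dv_{1,t}^{N}\geq\mathbb{E}\left[f\mathbb{I}_{\{T_{1}>t\}}\,|\,W_{\cdot}^{0},B_{\cdot}^{0},\mathcal{G}\right]$ and $\limsup_{N}\int f\,dv_{2,t}^{N}\leq\mathbb{E}\left[f\mathbb{I}_{\{T_{1}\leq t\}}\,|\,W_{\cdot}^{0},B_{\cdot}^{0},\mathcal{G}\right]$, become one and the same inequality once you use that $\int f\,dv_{t}^{N}$ converges to the sum of the two candidate limits; they cannot pinch anything. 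The missing direction, $\limsup_{N}\int f\,dv_{1,t}^{N}\leq\mathbb{E}\left[f\mathbb{I}_{\{T_{1}>t\}}\,|\,\cdot\right]$ (equivalently $\liminf_{N}\int f\,dv_{2,t}^{N}\geq\mathbb{E}\left[f\mathbb{I}_{\{T_{1}\leq t\}}\,|\,\cdot\right]$), cannot be obtained from Corollary~\ref{cor:2.4} at all: weak convergence of $v_{t}^{N}$ does not determine the limiting mass of $(0,\infty)\times\mathbb{R}$, because its boundary $\{0\}\times\mathbb{R}$ is charged by the limit. For instance $\frac12\delta_{1/N}+\frac12\delta_{1}\rightarrow\frac12\delta_{0}+\frac12\delta_{1}$ weakly while the mass of $(0,\infty)$ stays equal to $1$; in the model this is the scenario of a macroscopic fraction of particles hovering just above the barrier without having hit it, which bounded continuous test functions of $(X_{t}^{i},\sigma_{t}^{i})$ alone cannot rule out. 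No choice of continuous cutoffs can repair this, since any continuous majorant of $\mathbb{I}_{\{x>0\}}$ must equal at least $1$ at $x=0$ and therefore picks up the defaulted mass.

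Excluding that scenario requires the probabilistic structure of the system, and this is exactly where the paper's proof does its work: since $T_{i}$ is a measurable functional of the whole path $(X_{\cdot}^{i},\sigma_{\cdot}^{i})$, and these are i.i.d.\ under $\mathbb{P}(\cdot\,|\,W_{\cdot}^{0},B_{\cdot}^{0},\mathcal{G})$ (first paragraph of the proof of Theorem~\ref{thm:2.3}), a conditional strong law of large numbers gives the convergence of $\frac{1}{N}\sum_{i}f(\sigma_{t}^{i})\mathbb{I}_{\{T_{i}<t\}}$ for every rational $t$; a general $t$ is then sandwiched between rationals $t_{1}\leq t\leq t_{2}$, using the monotonicity in $t$ of the events $\{T_{i}<t\}$, Theorem~\ref{thm:2.3} applied to the test function $|f(y)-f(z)|$ to control $\frac{1}{N}\sum_{i}|f(\sigma_{t}^{i})-f(\sigma_{t_{j}}^{i})|$, and the continuity of the conditional law of $T_{1}$ (which you correctly invoke, but only for the cosmetic issue $\mathbb{I}_{\{T_{1}<t\}}$ versus $\mathbb{I}_{\{T_{1}\leq t\}}$). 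Your final steps (decomposition into positive and negative parts, countable dense family of test functions, uniform approximation) are fine, but to keep your route you would have to prove the law of large numbers for the indicator functionals yourself; the spatial cutoff argument is not a substitute for it.
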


\begin{proof}
First, observe that when $\left(W^{0},\,B^{0},\,\mathcal{G}\right)$
is given, $T_{1}$ has a continuous distribution, since it is a stopping
time for the Ito process $X_{t}^{1}$. Moreover, observe that
\[
v_{2,t}^{N}=\delta_{0}\sum_{i=1}^{N}\delta_{\sigma_{t}^{i}}\mathbb{I}_{\{T_{i}<t\}}
\]
and that
\[
\mathbb{E}\left[f\left(X_{t}^{1},\,\sigma_{t}^{1}\right)\mathbb{I}_{\{T_{1}<t\}}\,|\,W_{.}^{0},\,B_{.}^{0},\,G\right]=\mathbb{E}\left[f\left(0,\,\sigma_{t}^{1}\right)\mathbb{I}_{\{T_{1}<t\}}\,|\,W_{\cdot}^{0},\,B_{\cdot}^{0},\,\mathcal{G}\right],
\]
for any $t\geq0$ and $f\in C_{b}\left(\mathbb{R}^{2};\,\mathbb{R}\right)$,
which means that we only need to work with functions $f$ which are
constant in the first variable.

Let now $v_{2,t}(\cdot)$ be the probability law $\mathbb{P}\left(\left(X_{t}^{1},\,\sigma_{t}^{1}\right)\in\cdot
\,;\,T_{1}\leq t\,|\,W^{0},\,B^{0},\,\mathcal{G}\right)$
and fix a function $f$ in $C_{b}\left(\mathbb{R};\,\mathbb{R}\right)$
with positive values. Since $T_{i}$ is adapted to $\left(X_{t}^{i},\,\sigma_{t}^{i}\right)$
for any $i\in\mathbb{N}$, by the independence obtained in the first paragraph of the proof
of Theorem~\ref{thm:2.3} and a Law of Large Numbers argument similar to the
one in that proof, we have that the desired convergence holds for
the chosen $f$, for all $t\in\mathbb{Q}^{+}$ and for all $\omega$
in some $\Omega_{1,f}\subset\Omega$ with $\mathbb{P}\left(\Omega_{1,f}\right)=1$. By intersecting with the full-probability set $\Omega_0$ given in Theorem~\ref{thm:2.3}, we can take $\Omega_{1,f}\subset\Omega_{0}$. 
Now for a $t\geq0$ and an $\omega\in\Omega_{1,f}$, we pick any two
rational numbers $t_{1},\,t_{2}\geq0$ such that $t_{1}\leq t\leq t_{2}$.
Then we have
\begin{eqnarray*}
& & \liminf_{N\rightarrow\infty}\frac{1}{N}\sum_{i=1}^{N}f\left(\sigma_{t}^{i}\right)\mathbb{I}_{\{T_{i}\leq t\}} \\
& & \qquad =  \liminf_{N\rightarrow\infty}\frac{1}{N}\left[\sum_{i=1}^{N}f\left(\sigma_{t_{1}}^{i}\right)\mathbb{I}_{\{T_{i}<t\}}+
\sum_{i=1}^{N}\left(f\left(\sigma_{t}^{i}\right)-f\left(\sigma_{t_{1}}^{i}\right)\right)\mathbb{I}_{\{T_{i}<t\}}\right] \\
& & \qquad \geq \liminf_{N\rightarrow\infty}\frac{1}{N}\left[\sum_{i=1}^{N}f\left(\sigma_{t_{1}}^{i}\right)\mathbb{I}_{\{T_{i}<t\}}-\sum_{i=1}^{N}\left|f\left(\sigma_{t}^{i}\right)-f\left(\sigma_{t_{1}}^{i}\right)\right|\right] \\
& & \qquad \geq \liminf_{N\rightarrow\infty}\frac{1}{N}\sum_{i=1}^{N}f\left(\sigma_{t_{1}}^{i}\right)\mathbb{I}_{\{T_{i}<t_{1}\}}-\liminf_{N\rightarrow\infty}\frac{1}{N}\sum_{i=1}^{N}\left|f\left(\sigma_{t}^{i}\right)-f\left(\sigma_{t_{1}}^{i}\right)\right|,
\end{eqnarray*}
where the first term equals $\mathbb{E}\left[f\left(\sigma_{t_1}^{1}\right)\mathbb{I}_{\{T_{1}<t_{1}\}}\,|\,W_{.}^{0},\,B_{.}^{0},\,\mathcal{G}\right]$
for each rational time $t_1$.
Next, by recalling Theorem~\ref{thm:2.3} for $f(x,\,y,\,z)=|f(y)-f(z)|$, and $s=t_{1}$ we find that the second term equals $\mathbb{E}\left[|f\left(\sigma_{t}^{1}\right)-f\left(\sigma_{t_{1}}^{1}\right)|\,|\,W_{.}^{0},\,B_{.}^{0},\,\mathcal{G}\right]$.
Now taking $t_{1}\rightarrow t$ and using the Dominated Convergence
Theorem and the fact that the random variable $T_{1}$ has a continuous distribution, we obtain
\begin{equation}\label{eq:2.4}
\liminf_{N\rightarrow\infty}\frac{1}{N}\sum_{i=1}^{N}f(\sigma_{t}^{i})\mathbb{I}_{\{T_{i}\leq t\}}\geq\mathbb{E}\left[f(\sigma_{t}^{1})\mathbb{I}_{\{T_{1}<t\}}\,|\,W_{\cdot}^{0},\,B_{\cdot}^{0},\,\mathcal{G}\right].
\end{equation}

Similarly, we have 
\begin{eqnarray*}
& & \limsup_{N\rightarrow\infty}\frac{1}{N}\sum_{i=1}^{N}f\left(\sigma_{t}^{i}\right)\mathbb{I}_{\{T_{i}\leq t\}} \\
& & \qquad =\limsup_{N\rightarrow\infty}\frac{1}{N}\left[\sum_{i=1}^{N}f\left(\sigma_{t_{2}}^{i}\right)\mathbb{I}_{\{T_{i}<t\}}+\sum_{i=1}^{N}\left(f\left(\sigma_{t}^{i}\right)-f\left(\sigma_{t_{2}}^{i}\right)\right)\mathbb{I}_{\{T_{i}<t\}}\right] \\
& & \qquad \leq \limsup_{N\rightarrow\infty}\frac{1}{N}\left[\sum_{i=1}^{N}f\left(\sigma_{t_{2}}^{i}\right)\mathbb{I}_{\{T_{i}<t\}}+\sum_{i=1}^{N}\left|f\left(\sigma_{t}^{i}\right)-f\left(\sigma_{t_{2}}^{i}\right)\right|\right] \\
& & \qquad\leq \limsup_{N\rightarrow\infty}\frac{1}{N}\sum_{i=1}^{N}f\left(\sigma_{t_{2}}^{i}\right)\mathbb{I}_{\{T_{i}<t_{2}\}}+\limsup_{N\rightarrow\infty}\frac{1}{N}\sum_{i=1}^{N}\left|f\left(\sigma_{t}^{i}\right)-f\left(\sigma_{t_{2}}^{i}\right)\right|
\end{eqnarray*}
and by the same argument for the rational number $t_{2}\rightarrow t$, we find
\begin{equation}\label{eq:2.5}
\limsup_{N\rightarrow\infty}\frac{1}{N}\sum_{i=1}^{N}f\left(\sigma_{t}^{i}\right)\mathbb{I}_{\{T_{i}\leq t\}}\leq\mathbb{E}\left[f\left(\sigma_{t}^{1}\right)\mathbb{I}_{\{T_{1}<t\}}\,|\,W_{\cdot}^{0},\,B_{\cdot}^{0},\,\mathcal{G}\right].
\end{equation}

Hence, by \eqref{eq:2.4} and \eqref{eq:2.5}, the desired
convergence holds in $\Omega_{1,f}\subset\Omega_{0}$ for all $t\geq0$
and any $f$ in $C_{b}(\mathbb{R};\,\mathbb{R})$ with positive values.
By linearity, and since every continuous function can be decomposed
into its continuous positive part and its continuous negative part,
we can have our convergence result for any $f\in C_{b}(\mathbb{R};\,\mathbb{R})$.
Let now $S=\{p_{i}:\,i\in\mathbb{N}\}$ be a countable basis of $C_{b}(\mathbb{R};\,\mathbb{R})$.
Then, by linearity, the desired convergence holds for all $t\geq0$,
all $p\in\left[S\right]$ and all $\omega\in\Omega_{1}=\cap_{i\in\mathbb{N}}\Omega_{1,p_{i}}$,
with $\mathbb{P}(\Omega_{1})=1$. Now for any $f\in C_{b}(\mathbb{R};\,\mathbb{R})$
and $\epsilon>0$, we can pick $p\in\left[S\right]$ such that $||f-p||_{\infty}<\frac{\epsilon}{3}$,
so we have
\begin{eqnarray*}
& & \left|\int_{\mathbb{R}^{2}}f(y) dv_{2,t}^{N}(x,y)-\int_{\mathbb{R}^{2}}f(y) 
dv_{2,t}(x,y)\right| \\
& & \qquad \leq\left|\int_{\mathbb{R}^{2}}(f(y)-p(y)) dv_{2,t}^{N}(x,y)\right| +\left|\int_{\mathbb{R}^{2}}p(y)
 dv_{2,t}^{N}(x,y)-\int_{\mathbb{R}^{2}}p(y) dv_{2,t}(x,y)\right| \\
& & \qquad\qquad\qquad +\left|\int_{\mathbb{R}^{2}}(p(y)-f(y)) dv_{2,t}(x,y)\right| \\
& & \qquad \leq ||f-p||_{\infty}+\left|\int_{\mathbb{R}^{2}}p(y) dv_{2,t}^{N}(x,y)-\int_{\mathbb{R}^{2}}p(y) dv_{2,t}(x,y)\right|+||f-p||_{\infty} \\
& & \qquad \leq3\times\frac{\epsilon}{3}=\epsilon,
\end{eqnarray*}
for all $N$ sufficiently large. Thus, we have our convergence result
for all $t\geq0$ and all $\omega\in\Omega_{1}=\cap_{i\in\mathbb{N}}\Omega_{1,p_{i}}$
with $\mathbb{P}(\Omega_{1})=1$, so we are done.
\end{proof}

Next, by Corollary~\ref{cor:2.2} and Theorem~\ref{thm:2.5}, we have that 
$v_{1,t}^{N}=v_{t}^{N}-v_{2,t}^{N}\rightarrow v_{t}-v_{2,t}=:v_{1,t}$
weakly, for all $t\geq0$, $\mathbb{P}$-almost surely. Also, it follows
from Corollary~\ref{cor:2.4} and Theorem~\ref{thm:2.5} that
\begin{eqnarray*}
\int_{\mathbb{R}^{2}}f dv_{1,t} &=& \mathbb{E}\left[f\left(X_{t}^{1},\,\sigma_{t}^{1}\right)\mathbb{I}_{\{T_{1}>t\}}\,|\,W_{\cdot}^{0},\,B_{\cdot}^{0},\,\mathcal{G}\right]  \\
&=& \mathbb{E}\left[\mathbb{E}\left[f\left(X_{t}^{1},\,\sigma_{t}^{1}\right)\mathbb{I}_{\{T_{1}>t\}}\,|\,W_{\cdot}^{0},\,B_{\cdot}^{0},\,C_{1},\,\mathcal{G}\right]\,|\,W_{\cdot}^{0},\,B_{\cdot}^{0},\,\mathcal{G}\right],
\end{eqnarray*}
for any $f\in C_{b}\left(\mathbb{R}^{2};\,\mathbb{R}\right)$ and
$t\geq0$. It is therefore reasonable to study the behaviour of the
process of measures $v_{t,C_{1}}(\cdot)$ defined as
\[
v_{t,C_{1}}\left(\cdot \right)=\mathbb{P}\left[\left(X_{t}^{1},\sigma_{t}^{1}\right) \in \cdot , T_{1}>t \,|W_{\cdot}^{0},B_{\cdot}^{0},C_{1},\mathcal{G}\right],
\]
for a given value of $C_{1}=\left(k_{1},\,\theta_{1},\,\xi_{1},\,r_{1},\,\rho_{1,1},\,\rho_{2,1}\right)$.
The behaviour of this process of measures is given in the following
Theorem.

\begin{thm}\label{thm:2.6}
Let $A$ be the two-dimensional differential operator
mapping any smooth function $f:\mathbb{R}^{+}\times\mathbb{R}\rightarrow\mathbb{R}$
to
\begin{eqnarray*}
Af\left(x,\,y\right) &=& \left(r_{1}-\frac{h^{2}\left(y\right)}{2}\right) f_{x}\left(x,\,y\right)+k_{1}\left(\theta_{1}-y\right)f_{y}
\left(x,\,y\right)+\frac{1}{2}h^{2}\left(y\right)f_{xx}\left(x,\,y\right) \\
& & \qquad +\frac{1}{2}\xi_{1}^{2}yf_{yy}\left(x,\,y\right)+\xi_{1}\rho_{3}\rho_{1,1}\rho_{2,1}h(y)\sqrt{y}f_{xy}\left(x,\,y\right)
\end{eqnarray*}
for all $\left(x,\,y\right)\in\mathbb{R}^{+}\times\mathbb{R}$. Then,
the measure-valued stochastic process $v_{t,C_{1}}$ satisfies the
following weak form SPDE
\begin{eqnarray*}
\int_{\mathbb{R}^{2}}f\left(x,\,y\right) dv_{t,C_{1}}\left(x,\,y\right)&=&\int_{\mathbb{R}^{2}}f\left(x,\,y\right) dv_{0,C_{1}}\left(x,\,y\right) \\
& & +\int_{0}^{t}\int_{\mathbb{R}^{2}}Af\left(x,\,y\right) dv_{s,C_{1}}\left(x,\,y\right)ds \\
& & +\rho_{1,1}\int_{0}^{t}\int_{\mathbb{R}^{2}}h\left(y\right) f_{x}\left(x,\,y\right) dv_{s,C_{1}}\left(x,\,y\right)dW_{s}^{0} \\
& & +\xi_{1}\rho_{2,1}\int_{0}^{t}\int_{\mathbb{R}^{2}}\sqrt{y} f_{y}\left(x,\,y\right) dv_{s,C_{1}}\left(x,\,y\right)dB_{s}^{0},
\end{eqnarray*}
for all $t\geq0$ and any $f\in C_{0}^{test}=\left\{ g\in C_{b}^{2}\left(\mathbb{R}^{+}\times\mathbb{R}\right):\,g\left(0,\,y\right)=0,\:\forall\,y\in\mathbb{R}\right\} $.
\end{thm}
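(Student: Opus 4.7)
The natural route is to apply Ito's formula to $f(X_t^1, \sigma_t^1)$ and then take conditional expectation given the systemic information $\mathcal{H}_\infty := \sigma(W_\cdot^0, B_\cdot^0, C_1, \mathcal{G})$. For any $f \in C_0^{test}$, the boundary condition $f(0,y) \equiv 0$ gives
\[
f(X_t^1, \sigma_t^1) = f(X_t^1, \sigma_t^1)\mathbb{I}_{\{T_1 > t\}} = f(X_{t \wedge T_1}^1, \sigma_{t \wedge T_1}^1),
\]
so Ito's formula applied to the stopped process, using the SDEs \eqref{eq:model} with coefficients $C_1$, recovers exactly the operator $A$ as the bounded-variation integrand. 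This is because the co-variations $d\langle X^1\rangle_t = h^2(\sigma_t^1)dt$, $d\langle\sigma^1\rangle_t = \xi_1^2\sigma_t^1 dt$ and $d\langle X^1,\sigma^1\rangle_t = \rho_3\rho_{1,1}\rho_{2,1}\xi_1 h(\sigma_t^1)\sqrt{\sigma_t^1}\,dt$ reproduce every coefficient appearing in $A$, while the martingale part splits into four Ito integrals against $dW^1, dB^1, dW^0, dB^0$.

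I would then take $\mathbb{E}[\,\cdot\,\mid\mathcal{H}_\infty]$ of the whole identity. The $dW^1$- and $dB^1$-integrals vanish since $(W^1, B^1)$ is independent of $\mathcal{H}_\infty$ and the integrands are bounded and adapted. For the surviving systemic integrals, the key step is a filtering-type commutation: introducing the intermediate filtration $\mathcal{G}_s := \mathcal{G} \vee \sigma(C_1) \vee \sigma(W_u^0, B_u^0 : u \leq s)$, in which $W^0$ and $B^0$ remain Brownian motions, a standard projection result yields
\[
\mathbb{E}\!\left[\int_0^{t \wedge T_1} Y_s\, dW_s^0 \,\Big|\, \mathcal{H}_\infty\right] = \int_0^{t} \mathbb{E}[Y_s \mathbb{I}_{\{s<T_1\}} \mid \mathcal{G}_s]\, dW_s^0,
\]
and similarly for the $dB_s^0$-integral. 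The replacement of $\mathcal{G}_s$ by $\mathcal{H}_\infty$ inside the conditional expectation is legitimate because post-$s$ increments of $(W^0,B^0)$ are independent of the $\mathcal{F}_s$-measurable random variable $Y_s\mathbb{I}_{\{s<T_1\}}$.

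Finally, the very definition $v_{s,C_1}(\,\cdot\,) = \mathbb{P}[(X_s^1,\sigma_s^1) \in \cdot,\, T_1 > s \mid \mathcal{H}_\infty]$ identifies each surviving conditional expectation with the relevant integral against $v_{s,C_1}$, and a Fubini exchange in the drift term (justified by boundedness of $Af$) produces the claimed weak form SPDE. The main technical obstacle is the filtering commutation in the second paragraph: conditioning on the entire trajectories of $W^0$ and $B^0$ rather than on their values up to time $s$ could in principle introduce anticipativity in the stochastic integrals, and this is ruled out only by the independent increments of $(W^0, B^0)$ combined with the factorisation of the full filtration into systemic and idiosyncratic components. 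Everything else is routine given that $f$ and its partial derivatives up to order two are bounded, $h$ is smooth, and $\sigma^1$ has moments of all orders thanks to the strengthened Feller condition $k_1\theta_1 > \tfrac{3}{4}\xi_1^2$.
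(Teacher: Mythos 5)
Your proposal follows essentially the same route as the paper's own proof: apply Ito's formula to the stopped process (the boundary condition $f(0,\cdot)=0$ making the killed and stopped versions coincide), take conditional expectations given $\left(W_{\cdot}^{0},\,B_{\cdot}^{0},\,C_{1},\,\mathcal{G}\right)$, note that the Ito integrals against $W^{1}$ and $B^{1}$ vanish, and identify the surviving conditional expectations with integrals against $v_{s,C_{1}}$. If anything, you supply more detail than the paper on the one delicate step --- commuting the conditional expectation with the systemic stochastic integrals, justified by the independence of the post-$s$ increments of $(W^{0},B^{0})$ from the $\mathcal{F}_{s}$-measurable integrands --- which the paper passes over without comment.
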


\begin{proof}
By using Ito's formula for the stopped two-dimensional process $\left\{ \left(X_{t}^{1},\,\sigma_{t}^{1}\right):\,t\geq0\right\} $
given by \eqref{eq:model} and by recalling that $f\left(0,\,y\right)=0$ for
all $y$, we obtain
\begin{eqnarray*}
& & f\left(X_{t\wedge T_{1}}^{1},\,\sigma_{t}^{1}\right) \\
& & \qquad = f\left(X_{0}^{1},\,\sigma_{0}^{1}\right) \\
& & \qquad\qquad +\int_{0}^{t}\left[f_{x}\left(X_{s}^{1},\,\sigma_{s}^{1}\right)\left(r_{1}-\frac{h^{2}\left(\sigma_{s}^{1}\right)}{2}\right)+k_{1}f_{y}\left(X_{s}^{1},\,\sigma_{s}^{1}\right)\left(\theta_{1}-\sigma_{s}^{1}\right)\right]\mathbb{I}_{\{T_{1}>s\}}ds \\
& &  \qquad\qquad+\frac{1}{2}\int_{0}^{t}\left[f_{xx}\left(X_{s}^{1},\,\sigma_{s}^{1}\right)h^{2}\left(\sigma_{s}^{1}\right)+\xi_{1}^{2}f_{yy}\left(X_{s}^{1},\,\sigma_{s}^{1}\right)\sigma_{s}^{1}\right]\mathbb{I}_{\{T_{1}>s\}}ds \\
& &  \qquad\qquad+\xi_{1}\rho_{3}\rho_{1,1}\rho_{2,1}\int_{0}^{t}f_{xy}\left(X_{s}^{1},\,\sigma_{s}^{1}\right)h\left(\sigma_{s}^{1}\right)\sqrt{\sigma_{s}^{1}}\mathbb{I}_{\{T_{1}>s\}}ds \\
& &  \qquad\qquad+\int_{0}^{t}f_{x}\left(X_{s}^{1},\,\sigma_{s}^{1}\right)\mathbb{I}_{\{T_{1}>s\}}h\left(\sigma_{s}^{1}\right)\rho_{1,1}dW_{s}^{0} \\
& & \qquad\qquad+\xi_{1}\int_{0}^{t}f_{y}\left(X_{s}^{1},\,\sigma_{s}^{1}\right)\mathbb{I}_{\{T_{1}>s\}}\sqrt{\sigma_{s}^{1}}\rho_{2,1}dB_{s}^{0}
\\
& &  \qquad\qquad+\int_{0}^{t}f_{x}\left(X_{s}^{1},\,\sigma_{s}^{1}\right)\mathbb{I}_{\{T_{1}>s\}}h(\sigma_{s}^{1})\sqrt{1-\rho_{1,1}^{2}}dW_{s}^{1}
\\
& &  \qquad\qquad+\xi_{1}\int_{0}^{t}f_{y}\left(X_{s}^{1},\,\sigma_{s}^{1}\right)\mathbb{I}_{\{T_{1}>s\}}\sqrt{\sigma_{s}^{1}}\sqrt{1-\rho_{2,1}^{2}}dB_{s}^{1}
\end{eqnarray*}
and the desired result follows by taking conditional expectations
given $\left(W_{\cdot}^{0},\,B_{\cdot}^{0}\right)$, $C_{1}$ and
$\mathcal{G}$, by noticing that Ito integrals with respect to $B_{\cdot}^{1}$
and $W_{\cdot}^{1}$ vanish due to the pairwise independence of the
Brownian Motions, and by taking the given coefficients out of the
conditional expectations.
\end{proof}

\section{Volatility Analysis - A Malliavin Calculus approach}

Now that we have connected our problem to the study of the probabilistic
solution of an SPDE, we need to establish the best possible regularity result for that solution. Before showing that the measure-valued process
$\mathbb{E}\left[\cdot(X_{t}^{1},\,\sigma_{t}^{1}) \mathbb{I}_{T_1 > t}\,|\,B_{\cdot}^{0},\,W_{\cdot}^{0},\, C_1, \, \mathcal{G}\right]$
does indeed have a density for almost all paths
of $\left(B_{\cdot}^{0},\,W_{\cdot}^{0}\right)$ with some good regularity, it is natural (and important as we will see) to ask whether the same holds for the
1-dimensional measure-valued process describing the evolution of $\mathbb{E}\left[f(\sigma_{t})\,|\,B_{\cdot}^{0},\,\mathcal{G}\right]$, for suitable $f$, where $\sigma$ is a CIR process driven by a combination of $B_{\cdot}^{0}$ and $B_{\cdot}^{1}$, that is a process satisfying
\begin{equation}\label{eq:3.1}
d\sigma_{t}=k(\theta-\sigma_{t})dt+\xi\sqrt{\sigma_{t}}\sqrt{1-\rho_{2}^{2}}dB_{t}^{1}+\xi\sqrt{\sigma_{t}}\rho_{2}dB_{t}^{0},\;t\geq0.
\end{equation}
We assume that the coefficients of equation \eqref{eq:3.1} satisfy: $k\theta>\frac{3}{4}\xi^{2}$,
which is stronger than the standard Feller boundary condition for
a CIR process, and also $\rho_2 \in (-1, \, 1)$. Then, the answer to our question is given in the next
theorem.
\begin{thm}\label{thm:3.1}
Assume that $\sigma_{0}$ is a random variable in $L^{p}\left(\Omega,\,\mathcal{F}_{0},\,\mathbb{P}\right)$
for all $p>p_{0}=1-\frac{2k\theta}{\xi^{2}}$, such that given $\mathcal{G}$,
$\sigma_{0}$ has a continuous density $p_{0}(\cdot\,|\,\mathcal{G})$
which is supported in $[0,\infty)$ and which satisfies
\[
\mathbb{E}\left[||p_{0}(\cdot\,|\,\mathcal{G})||_{\infty}^{\gamma}\right]<\infty,
\]
for all $\gamma\in\left[-\frac{2k\theta}{\xi^{2}},\,1\right]$. Then,
for every path of $B_{\cdot}^{0}$ and $t\geq0$, the conditional probability
measure $\mathbb{P}(\sigma_{t}\in A\,|\,B_{\cdot}^{0},\,\mathcal{G})$
posseses a continuous probability density $p_{t}(\cdot\,|\,B_{\cdot}^{0},\,\mathcal{G})$
which is supported in $[0,\infty)$. Moreover, for any $T>0$,
any $1<q<\frac{4k\theta}{3\xi^{2}}$ and any $\alpha\geq0$, we have
\[
M_{B^{0},\alpha}=\sup_{t\leq T}\left(\sup_{y\geq0}\left(y^{\alpha}p_{t}(y\,|\,B_{\cdot}^{0},\,\mathcal{G})\right)\right)\in L^{q}\left(\Omega\right)
\]
\end{thm}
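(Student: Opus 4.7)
My plan is to apply Malliavin calculus with respect to $B^1$, after conditioning on $(B^0,\mathcal{G})$. Under this conditioning, $\sigma_t$ satisfies a one-dimensional SDE driven by $B^1$ whose coefficients are $(B^0,\mathcal{G})$-measurable. The central difficulty is that $\sqrt{\sigma}$ is not Lipschitz, so standard Malliavin criteria for the existence and regularity of a density do not apply directly; the approach must therefore proceed by approximation.

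First I would approximate $\sigma$ by processes $\sigma^\varepsilon$ solving SDEs in which $\sqrt{\cdot}$ is replaced by a smooth Lipschitz function $\phi_\varepsilon$ with $\phi_\varepsilon(x)\to\sqrt{x}$ as $\varepsilon\downarrow 0$. For these approximations the Malliavin derivative $D_s^1\sigma_t^\varepsilon$ solves a linear SDE with explicit exponential solution of leading order $\xi\sqrt{1-\rho_2^2}\,\phi_\varepsilon(\sigma_s^\varepsilon)$. I would then send $\varepsilon\to 0$ in the appropriate Malliavin-Sobolev space. This is where the assumption $k\theta>\tfrac{3}{4}\xi^2$, strictly stronger than the Feller condition, plays its role: it is precisely the threshold needed to obtain uniform-in-$\varepsilon$ control of negative moments $\mathbb{E}[(\sigma_t^\varepsilon)^{-p}]$ in the range of $p$ that the argument will consume. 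The moment assumptions placed on the conditional initial density $p_0(\cdot\mid\mathcal{G})$, including for negative exponents $\gamma\in[-2k\theta/\xi^2,1]$, are what allow these negative-moment bounds to hold uniformly from $t=0$ onward.

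Next, with a well-defined Malliavin derivative and covariance $\gamma_t=\int_0^t(D_s^1\sigma_t)^2\,ds$ in hand, I would apply the Nualart integration-by-parts formula to write
$$p_t(y\mid B^0,\mathcal{G})=\mathbb{E}\left[\mathbb{I}_{\{\sigma_t>y\}}\,H_t\mid B^0,\mathcal{G}\right],$$
where $H_t$ is a Skorokhod integral (in $B^1$) built from $D^1\sigma_t/\gamma_t$ and its Malliavin derivative. Multiplying by $\sigma_t^\alpha$ inside the expectation before integrating by parts produces the weighted bound for $y^\alpha p_t(y\mid B^0,\mathcal{G})$ uniformly in $y$, while an additional round of integration by parts gives moment bounds on increments of the density, yielding continuity in $(t,y)$ by a Kolmogorov-type argument and allowing the $\sup_{t,y}$ to be absorbed into the $L^q(\Omega)$ norm. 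The sharp exponent comes from Hölder's inequality combined with the standard CIR negative-moment estimate $\mathbb{E}[\sigma_t^{-p}]<\infty$ for $p<2k\theta/\xi^2$: the expression for $H_t$ involves, at leading order, a product of three $\sigma^{-1/2}$-type factors (two arising from $\gamma_t^{-1}$ and one from the divergence of $D^1\sigma_t$), so balancing in Hölder gives the sharp condition $\tfrac{3q}{2}<\tfrac{2k\theta}{\xi^2}$, i.e.\ $q<\tfrac{4k\theta}{3\xi^2}$.

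The main obstacle is the approximation step. To transfer the integration-by-parts representation from $\sigma^\varepsilon$ to $\sigma$, one needs simultaneous $L^p$ convergence of $\sigma^\varepsilon$, of $D^1\sigma^\varepsilon$, of $(\gamma_t^\varepsilon)^{-1}$, and of the Skorokhod integrals defining $H_t^\varepsilon$, all for exponents $p$ that are dictated by the target range of $q$. This is what forces the coefficient $\tfrac{3}{4}$ in the Feller-type assumption and is where the calculation is most delicate; once it is in place, the representation, the weighted pointwise bound, and the continuity argument combine to deliver the stated $L^q$ estimate for $M_{B^0,\alpha}$.
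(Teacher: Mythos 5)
Your overall strategy --- conditioning on $(B^0,\mathcal{G})$, doing Malliavin calculus with respect to $B^1$, approximating by processes with better coefficients, and extracting the density and the weighted bound from an integration-by-parts formula plus H\"older and CIR negative-moment estimates --- is the same as the paper's. The genuine gap is in the central step: you propose the standard representation $p_t(y\mid B^0,\mathcal{G})=\mathbb{E}\bigl[\mathbb{I}_{\{\sigma_t>y\}}H_t\mid B^0,\mathcal{G}\bigr]$ with $H_t=\delta\bigl(D\sigma_t/\gamma_t\bigr)$ and $\gamma_t=\int_0^t(D_s\sigma_t)^2\,ds$, which requires negative moments of the Malliavin covariance $\gamma_t$ (and moments of a Skorokhod integral of a non-adapted integrand). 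You account for this heuristically by saying $\gamma_t^{-1}$ contributes ``two $\sigma^{-1/2}$-type factors'', but that accounting is not justified: $D_s\sigma_t=\xi\sqrt{1-\rho_2^2}\,e^{-\int_s^t[(\frac{k\theta}{2}-\frac{\xi^2}{8})\sigma_r^{-1}+\frac{k}{2}]dr}\sqrt{\sigma_t}$ carries an exponential damping factor that collapses precisely when the volatility path spends time near zero, so $\gamma_t^{-1}$ is not comparable to a power of $\sigma_t^{-1}$, and $\mathbb{E}[\gamma_t^{-p}]<\infty$ does not follow from the standard CIR estimate $\mathbb{E}[\sigma_t^{-p}]<\infty$ for $p<2k\theta/\xi^2$. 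Without such a bound, the representation, the weighted estimate, and in particular your claimed sharp threshold $q<\frac{4k\theta}{3\xi^2}$ are unsupported.

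The paper is built to avoid exactly this obstacle. Instead of using the covariance direction $D\sigma_t$, it applies a density criterion (Lemma~3.5, a variant of Nualart's Proposition~2.1.1 combined with Meyer's inequality) with an auxiliary process $u_{t'}$ chosen so that $u_{t'}/\langle u_\cdot,D_\cdot\sigma_t\rangle_{L^2[0,t]}=u_{t'}/u_t\le 1$; no inverse-covariance moments enter at all. The singular behaviour is then carried entirely by the Malliavin derivative of this bounded ratio, i.e.\ by the second Malliavin derivative $D^2\sigma_t$ (Lemma~3.4), whose $q'$-moment bound requires $\mathbb{E}\int_0^T\sigma_s^{-3p/2}\,ds<\infty$ and hence $q'<\frac{4k\theta}{3\xi^2}$ --- that is where the exponent and the strengthened condition $k\theta>\frac34\xi^2$ genuinely come from. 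Note also that continuity of $y\mapsto p_t(y\mid B^0,\mathcal{G})$ follows directly by dominated convergence in the representation, and the supremum over $t\le T$ is absorbed by bounding the constants uniformly through moments of $\sup_{t\le T}\sigma_t$ and of the second Malliavin derivative; no Kolmogorov-type continuity in $(t,y)$ is needed, nor would your sketch establish it. To salvage your route you would have to prove conditional negative-moment bounds for $\gamma_t$, a substantial piece of analysis absent from the proposal.
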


To prove this Theorem we need a few lemmas. First, we will need the following finiteness result for the moments of the supremum of a CIR / Ornstein-Uhlenbeck process up to some finite time. The proof of this technical lemma can be found in the Appendix.
\begin{lem}\label{lem:3.2}
Under the assumptions of Theorem~\ref{thm:3.1}, for any $p\geq0$ and $T>0$
we have
\[
\mathbb{E}\left[\sup_{0\leq t\leq T}\sigma_{t}^{p}\right]<\infty.
\]
Moreover, if $\left\{ u_{t}:\,t\geq0\right\} $ is the Ornstein-Uhlenbeck process which solves the SDE
\[
du_{t}=-\frac{k}{2}u_{t}dt+\frac{\xi}{2}\left(\sqrt{1-\rho_{2}^{2}}dB_{t}^{1}+\rho_{2}dB_{t}^{0}\right),
\]
under the initial condition $u_{0}=\sqrt{\sigma_{0}}$, then we have also
\[
\mathbb{E}\left[\sup_{0\leq t\leq T}u_{t}^{2}\right]<\infty.
\]
\end{lem}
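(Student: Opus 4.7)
The plan is to handle the CIR moment bound via Ito's formula combined with the Burkholder-Davis-Gundy inequality, and to handle the Ornstein-Uhlenbeck bound via the explicit solution formula. Rewriting the CIR equation \eqref{eq:3.1} as $d\sigma_t = k(\theta - \sigma_t)dt + \xi\sqrt{\sigma_t}\,d\tilde B_t$, where $\tilde B_t = \sqrt{1-\rho_2^2}B_t^1 + \rho_2 B_t^0$ is a standard Brownian motion, we see $\sigma$ is a standard CIR process. The assumption $k\theta > \frac{3}{4}\xi^2$ implies $p_0 = 1 - \frac{2k\theta}{\xi^2} < -\frac{1}{2}$, so the hypothesis on $\sigma_0$ guarantees that all positive moments of $\sigma_0$ are finite, and the standard Feller condition holds.

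For $p \geq 1$, I would apply Ito's formula to $\sigma_t^p$, after localising with stopping times $\tau_n = \inf\{t:\sigma_t > n\}$, to obtain
\[
d(\sigma_t^p) = p\left(k\theta + \tfrac{(p-1)\xi^2}{2}\right)\sigma_t^{p-1}dt - pk\sigma_t^p dt + p\xi\sigma_t^{p-1/2}d\tilde B_t.
\]
Taking expectation and inducting on $p$ (starting from $p=1$, where $\mathbb{E}[\sigma_t]$ satisfies a linear ODE with explicit bounded solution) gives $\sup_{t \leq T}\mathbb{E}[\sigma_t^p] < \infty$ for every $p \geq 1$; the case $p \in [0,1]$ follows from the elementary inequality $x^p \leq 1 + x$. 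To upgrade this to a sup moment, I would take the supremum inside the Ito expansion and apply BDG to the martingale term:
\[
\mathbb{E}\left[\sup_{s \leq t}\left|\int_0^s p\xi \sigma_u^{p-1/2}d\tilde B_u\right|\right] \leq C\,\mathbb{E}\left[\left(\int_0^t \sigma_u^{2p-1}du\right)^{1/2}\right].
\]
Using the pointwise estimate $\sigma_u^{2p-1} \leq \sup_{v \leq t}\sigma_v^p \cdot \sigma_u^{p-1}$ inside the square root, followed by Young's inequality, allows one to absorb a factor of $\tfrac{1}{2}\mathbb{E}[\sup_{s \leq t}\sigma_s^p]$ into the left-hand side, leaving a bound in terms of $\int_0^t \mathbb{E}[\sigma_s^{p-1}]ds$, which is already finite by the first step.

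For the Ornstein-Uhlenbeck estimate, the SDE has the explicit solution
\[
u_t = e^{-kt/2}\sqrt{\sigma_0} + \frac{\xi}{2}e^{-kt/2}\int_0^t e^{ks/2}\,d\tilde B_s,
\]
so writing $M_t = \frac{\xi}{2}\int_0^t e^{ks/2}\,d\tilde B_s$ (a continuous $L^2$-martingale with deterministic bracket $\langle M\rangle_T = \tfrac{\xi^2}{4k}(e^{kT}-1)$), we obtain $\sup_{t \leq T} u_t^2 \leq 2\sigma_0 + 2\sup_{t \leq T} M_t^2$. Doob's $L^2$-inequality then gives $\mathbb{E}[\sup_{t \leq T} M_t^2] \leq 4\mathbb{E}[M_T^2] < \infty$, while $\mathbb{E}[\sigma_0] < \infty$ follows immediately from the hypothesis on $\sigma_0$.

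The main obstacle is largely bookkeeping: the stopping-time localisation for the CIR Ito calculation, combined with monotone and dominated convergence, must be handled carefully so that each stochastic integral is a genuine martingale before expectations are taken and the limit $n \to \infty$ is passed. Once those technicalities are in place, the BDG/Young absorption is standard and the OU part is essentially routine from the explicit formula.
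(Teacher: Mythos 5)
Your proposal is correct, but it takes a somewhat different route from the paper's proof. The paper also starts from Ito's formula applied to $\sigma_t^n$ (integer $n$ only, with general $p$ reduced to this case), bounds the drift by a constant because the integrand is a polynomial with negative leading coefficient evaluated at a nonnegative process, and then takes the supremum and expectation; but instead of your BDG--Young absorption it uses Cauchy--Schwarz together with Doob's inequality to reduce everything to $\int_0^T \mathbb{E}^{1/2}\left[\sigma_s^{2n-1}\right]ds$, and it bounds $\mathbb{E}\left[\sigma_s^{2n-1}\right]$ uniformly on $[0,T]$ by invoking the explicit CIR moment estimates of Hurd--Kuznetsov (conditioning on $\sigma_0$ and using the hypergeometric-function bound), rather than deriving fixed-time moments internally. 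Your version replaces that external input with a self-contained Gronwall/induction argument for $\sup_{t\le T}\mathbb{E}[\sigma_t^p]$ and then upgrades to the running supremum via BDG plus the splitting $\sigma_u^{2p-1}\le \sup_v\sigma_v^p\,\sigma_u^{p-1}$ and Young's inequality; this buys independence from the closed-form moment formulas at the cost of the localisation bookkeeping you already flag (the absorption of $\tfrac12\mathbb{E}[\sup\sigma^p]$ must be performed on the stopped process, where that quantity is finite, before letting $n\to\infty$). Two minor points to tidy up: for noninteger $p\in(1,2)$ the function $x^p$ is not $C^2$ at the origin, so either restrict to integer $p$ as the paper does (general $p$ then follows from $x^p\le 1+x^{\lceil p\rceil}$) or add a localisation away from zero, which is legitimate here since the standing assumption $k\theta>\tfrac34\xi^2$ together with the negative-moment hypothesis on $\sigma_0$ keeps $\sigma$ strictly positive. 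The Ornstein--Uhlenbeck part is in the same spirit as the paper's (explicit solution formula; the paper dominates by the running maximum of a Brownian motion while you apply Doob's $L^2$ inequality to the martingale $\int_0^t e^{ks/2}d\tilde B_s$), and both are fine.
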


Next, we need a few results that involve the notion of Malliavin differentiability. The Malliavin
derivative of a random variable adapted to a Brownian path is a stochastic process measuring, in 
some sense, the rate of change of the random variable when the Brownian path changes at any time 
$t$. Extending this to random variables taking values in Banach spaces, we can define the $k$-th 
Malliavin derivative as a random function of $k$ time variables (provided that it exists). The 
existence and behaviour of these derivatives are inextricably connected to the existence of a 
regular density for the random variable. We refer to \cite{Nualart} for the basics of Malliavin 
calculus and, as in \cite{Nualart}, we denote by $\mathbb{D}^{n,p}(V)$ the space of $n$-times 
Malliavin differentiable random variables taking values in the Banach space $V$, whose $k$-th 
Malliavin derivative has an $L^2$ norm (as a function in $k$ time variables taking values 
in $V$) of a finite $L^p$ norm as a random variable, for all $0 \leq k \leq n$.  

In \cite{AE1} and \cite{AE2} it is proven that the CIR process
has a Malliavin derivative under the probability measure $\mathbb{P}$, which is given by a quite 
similar formula. In \cite{AE1} it is also proven that under our strong assumption $k\theta > 
\frac{3}{4}{\xi}^2$, a second Malliaving derivative with some regularity also exists. The next two 
lemmas extend these results to the case where the path of the market noise $B_{\cdot}$ is given. 
This is exactly what we need in order to prove Theorem~\ref{thm:3.1}. The proofs of these extensions are 
more or less based on the same ideas as the corresponding initial results (except that 
Lemma~\ref{lem:3.2} is also needed at some points) and can be found in the Appendix. 

\begin{lem}\label{lem:3.3}
There exists an $\Omega_{1}\subset\Omega$ with $\mathbb{P}\left(\Omega_{1}\right)=1$,
such that for all $\omega\in\Omega_{1}$ the random probability measure
$\mathbb{P}(\cdot\,|\,B_{\cdot}^{0},\,\mathcal{G})$ has the following
property: Under $\mathbb{P}(\cdot\,|\,B_{\cdot}^{0},\,\mathcal{G})$,
the process $\{\sigma_{t}:\,t\geq0\}$ has a Malliavin derivative
with respect to the Brownian Motion $B_{\cdot}^{1}$ which is given
by
\begin{equation}\label{eq:3.2}
D_{t'}\sigma_{t}=\xi\sqrt{1-\rho^{2}_{2}}e^{-\int_{t'}^{t}\left[\left(\frac{k\theta}{2}-\frac{\xi^{2}}{8}\right)\frac{1}{\sigma_{s}}+\frac{k}{2}\right]ds}\sqrt{\sigma_{t}},
\end{equation}
for all $t>0$ and $0\leq t'\leq t$. This is a process in $t'$
which belongs to $L_{B_{\cdot}^{0},\,\mathcal{G}}^{2}\left([0,\,t]\times\Omega\right)$
for any fixed $t\geq0$, where the notation $L_{B_{\cdot}^{0},\,\mathcal{G}}^{q}$
is used for any $L^{q}$ space when the probability measure $\mathbb{P}$
is replaced by $\mathbb{P}(\cdot\,|\,B_{\cdot}^{0},\,\mathcal{G})$. 
\end{lem}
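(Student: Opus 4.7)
The plan is to adapt the Alòs--Ewald Malliavin calculus approach for the CIR process to the conditional setting in which $(B^0_{\cdot},\mathcal{G})$ is frozen. Since $B^1$ is independent of $(B^0_{\cdot},\mathcal{G})$ under $\mathbb{P}$, it remains a standard Brownian motion under $\mathbb{P}(\cdot\,|\,B^0_{\cdot},\mathcal{G})$, so the usual Malliavin calculus on the $B^1$-Wiener space is available pathwise in $B^0$. The key reduction is the Lamperti transform $u_t = \sqrt{\sigma_t}$, which by It\^o's formula turns \eqref{eq:3.1} into
\begin{equation*}
du_t = \left(\frac{k\theta/2-\xi^{2}/8}{u_{t}}-\frac{k}{2}u_{t}\right)dt+\frac{\xi\sqrt{1-\rho_{2}^{2}}}{2}\,dB_{t}^{1}+\frac{\xi\rho_{2}}{2}\,dB_{t}^{0},
\end{equation*}
whose diffusion coefficients are constant in $u$. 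This removes the only genuine difficulty (the non-Lipschitz square-root diffusion of the CIR) and leaves only the drift singularity at $u=0$, which is precisely what the strengthened Feller condition $k\theta > \frac{3}{4}\xi^{2}$ is designed to tame.

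I would proceed in three concrete steps. First, regularise by replacing the singular drift with $(k\theta/2-\xi^{2}/8)/(u\vee n^{-1})$, yielding an SDE with globally Lipschitz drift and constant diffusion whose solution $u^{(n)}_t$ lies in $\mathbb{D}^{1,p}$ for every $p$ by classical theory. Because the diffusion coefficients do not depend on $u$, the Malliavin derivative $D_{t'}^{B^1}u^{(n)}_t$ in the $B^1$-direction satisfies a purely pathwise linear ODE started from $\frac{\xi\sqrt{1-\rho_{2}^{2}}}{2}$ at time $t'$, whose solution is the exponential
\begin{equation*}
D_{t'}^{B^1}u^{(n)}_t = \frac{\xi\sqrt{1-\rho_{2}^{2}}}{2}\exp\!\left(-\int_{t'}^{t}\left[\frac{(k\theta/2-\xi^{2}/8)\,\mathbb{I}_{\{u_s^{(n)}>n^{-1}\}}}{(u_s^{(n)}\vee n^{-1})^{2}}+\frac{k}{2}\right]ds\right).
\end{equation*}
Second, use the strengthened Feller condition together with Lemma~\ref{lem:3.2} to deduce that $\inf_{s\leq T}u_s>0$ on a full-probability set conditional on $(B^0_{\cdot},\mathcal{G})$, so that for $n$ sufficiently large (depending on $\omega$) the truncation is inactive on $[0,T]$ and $u^{(n)}\equiv u$; this yields pathwise convergence $u^{(n)}_t\to u_t$ and, via closedness of the Malliavin derivative on $L^{2}_{B^0_{\cdot},\mathcal{G}}([0,t]\times\Omega)$, convergence of $D_{t'}^{B^1}u^{(n)}_t$ to the analogous exponential in $u_s$. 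Third, apply the chain-rule identity $D_{t'}^{B^1}\sigma_t = 2u_t D_{t'}^{B^1}u_t$ to recover the formula \eqref{eq:3.2}.

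The $L^{2}_{B^0_{\cdot},\mathcal{G}}$ membership statement is actually the easy bookkeeping part: under the Feller condition the exponent in \eqref{eq:3.2} is non-positive, so $|D_{t'}^{B^1}\sigma_t|\leq \xi\sqrt{\sigma_t}$ and $\int_{0}^{t}|D_{t'}^{B^1}\sigma_t|^{2}dt'\leq \xi^{2}t\,\sigma_t$, whose conditional expectation is finite by Lemma~\ref{lem:3.2}. The main obstacle is the limit passage to obtain the explicit formula: one must control the rate at which $\{\min_{s\leq T}u_s\leq n^{-1}\}$ shrinks, uniformly enough to pass to the limit inside the exponentials in $L^{2}_{B^0_{\cdot},\mathcal{G}}$. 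The comparison $\sqrt{\sigma_t}\leq$ auxiliary OU process supplied by the second statement of Lemma~\ref{lem:3.2}, combined with negative-moment estimates for the CIR process (which require $k\theta > \frac{3}{4}\xi^{2}$ to hold at the relevant exponents), delivers the uniform integrability and is the true reason for the strengthened hypothesis. Finally, the set $\Omega_1$ in the statement is obtained by intersecting the full-probability sets produced above over a countable dense family of $(t',t)$ and extending the identity to all $0\leq t'\leq t$ by continuity of $s\mapsto\sigma_s$.
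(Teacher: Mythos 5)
Your proposal follows essentially the same route as the paper's proof: the Lamperti transform $v_t=\sqrt{\sigma_t}$, truncation of the singular drift to get a Lipschitz SDE whose Malliavin derivative is an explicit exponential, identification of the truncated and true processes up to the (a.s.\ infinite) hitting time of level $n^{-1}$, passage to the limit via the uniform bound $|D_{t'}v^{(n)}_t|\leq \tfrac{\xi\sqrt{1-\rho_2^2}}{2}$ and closedness of the derivative operator under $\mathbb{P}(\cdot\,|\,B^0_{\cdot},\mathcal{G})$, and finally the chain rule $D_{t'}\sigma_t=2v_tD_{t'}v_t$ (which the paper justifies by one more truncation of $x\mapsto x^2$, since it is not Lipschitz, rather than invoking the chain rule directly). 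The only small discrepancy is motivational: the limit passage here needs only the non-negativity of $k\theta/2-\xi^2/8$ and non-attainment of zero, not negative-moment estimates — the strengthened condition $k\theta>\tfrac34\xi^2$ is really consumed in the second-derivative Lemma~\ref{lem:3.4}.
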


\begin{lem}\label{lem:3.4}
For any $1\leq q'<\frac{4k\theta}{3\xi^{2}}$ and $T>0$,
there exists an $\Omega_{2}\subset\Omega$ with $\mathbb{P}\left(\Omega_{2}\right)=1$,
such that for all $\omega\in\Omega_{2}$ the random probability measure
$\mathbb{P}(\cdot\,|\,B_{\cdot}^{0},\,\mathcal{G})$ has the following
property: Under $\mathbb{P}(\cdot\,|\,B_{\cdot}^{0},\,\mathcal{G})$,
the process $\{\sigma_{t}:\,0<t\le T\}$ belongs to the space $\mathbb{D}^{2,q'}$
with respect to the Brownian Motion $B_{\cdot}^{1}$, and the second
order Malliavin derivative is given by
\begin{equation}\label{eq:3.11}
D_{t',t''}^{2}\sigma_{t}=D_{t'}\sigma_{t}\times\left[\int_{t'}^{t}\left(\frac{k\theta}{2}-\frac{\xi^{2}}{8}\right)\frac{1}{\left(\sigma_{s}\right)^{2}}D_{t''}\sigma_{s}ds+\frac{1}{2\sigma_{t}}D_{t''}\sigma_{t}\right],
\end{equation}
for all $0<t\leq T$ and $0\leq t',\,t''\leq t$, where the first
order derivatives are given by Lemma 3.3. Furthermore we have
\begin{equation}\label{eq:3.12}
\mathbb{E}\left[\sup_{0\leq t',t''\leq t\leq T}|D_{t',t''}^{2}\sigma_{t}|^{q'}\right]<\infty.
\end{equation}
The same holds for the process $v_{t}=\sqrt{\sigma_{t}}$, but this time the second Malliavin Derivative is given by
\begin{equation}\label{eq:3.13}
D_{t',t''}^{2}v_{t}=D_{t'}v_{t}\times\left[\int_{t'}^{t}\left(\frac{k\theta}{2}-\frac{\xi^{2}}{8}\right)\frac{1}{\sigma_{s}^{2}}D_{t''}\sigma_{s}ds\right].
\end{equation}
\end{lem}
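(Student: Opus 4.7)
The plan is to derive the two second-order formulas by applying the Malliavin product and chain rules to the first-order identity \eqref{eq:3.2} from Lemma~\ref{lem:3.3}, and then to establish the integrability bound \eqref{eq:3.12} via the exponential-bound trick combined with the moment estimates of Lemma~\ref{lem:3.2} and standard inverse-moment estimates for the CIR process. Everything must be carried out under the conditional law $\mathbb{P}(\cdot\,|\,B^0_\cdot,\mathcal{G})$, exactly as in Lemma~\ref{lem:3.3}, and the hard part is justifying the chain rule: the exponent in \eqref{eq:3.2} contains the singular term $1/\sigma_s$, and the square-root diffusion coefficient of the CIR SDE is not Lipschitz.

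For the formal computation, write $D_{t'}\sigma_t = \xi\sqrt{1-\rho_2^2}\,\sqrt{\sigma_t}\,e^{F_{t',t}}$, where
$$F_{t',t} \;=\; -\int_{t'}^t \left[\Bigl(\tfrac{k\theta}{2}-\tfrac{\xi^2}{8}\Bigr)\tfrac{1}{\sigma_s} + \tfrac{k}{2}\right] ds.$$
The product rule, together with $D_{t''}(\sqrt{\sigma_t}) = D_{t''}\sigma_t/(2\sqrt{\sigma_t})$ and
$$D_{t''} F_{t',t} \;=\; \int_{t'}^t \Bigl(\tfrac{k\theta}{2}-\tfrac{\xi^2}{8}\Bigr)\tfrac{1}{\sigma_s^2}\,D_{t''}\sigma_s\, ds,$$
yields \eqref{eq:3.11} once one recognises $\xi\sqrt{1-\rho_2^2}\,\sqrt{\sigma_t}\,e^{F_{t',t}}$ as $D_{t'}\sigma_t$ and combines the $\sqrt{\sigma_t}$-terms into $D_{t''}\sigma_t/(2\sigma_t)$. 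For $v_t = \sqrt{\sigma_t}$, the first-order derivative simplifies to $D_{t'}v_t = (\xi/2)\sqrt{1-\rho_2^2}\,e^{F_{t',t}}$, which carries no $\sqrt{\sigma_t}$-factor; differentiating again therefore only acts on the exponential and produces the clean cancellation that yields \eqref{eq:3.13}.

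To make this rigorous I would follow the approximation strategy of \cite{AE1}, adapted to the conditional setting as in the proof of Lemma~\ref{lem:3.3}. One replaces $\sqrt{\sigma}$ and $1/\sigma$ by smooth, globally Lipschitz truncations, so that the corresponding approximating processes possess second Malliavin derivatives by standard calculus and satisfy the analogues of \eqref{eq:3.11} and \eqref{eq:3.13}. Passing to the limit then requires uniform-in-$n$ control of these solutions and of their reciprocals, which is where Lemma~\ref{lem:3.2} enters for the moments of $\sigma$ and $u=\sqrt{\sigma}$, and where the reinforced Feller condition $k\theta>\tfrac{3}{4}\xi^2$ is used to secure sufficiently high inverse moments of $\sigma_s$.

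For the bound \eqref{eq:3.12}, the key observation is that $k\theta>\xi^2/4$ forces $F_{t',t}\le 0$, so $e^{F_{t',t}}\le 1$ and hence $D_{t'}\sigma_t \le \xi\sqrt{1-\rho_2^2}\,\sqrt{\sigma_t}$. Substituting into \eqref{eq:3.11} and using the same bound for $D_{t''}\sigma_s$ gives
$$|D^2_{t',t''}\sigma_t| \;\le\; C\sqrt{\sigma_t}\left(\int_0^T \sigma_s^{-3/2}\,ds \;+\; \sigma_t^{-1/2}\right).$$
After taking the supremum in $t',t'',t$ and the $L^{q'}$ norm, Hölder's inequality with a conjugate exponent $b>1$ close enough to one that $3bq'/2 < 2k\theta/\xi^2$ — which is permitted by the assumption $q'<4k\theta/(3\xi^2)$ — reduces the inverse-moment factor to $\int_0^T \mathbb{E}[\sigma_s^{-3bq'/2}]\,ds$ (via Jensen), finite by standard CIR estimates, while $\mathbb{E}[(\sup_t\sigma_t)^{aq'/2}]<\infty$ for the dual exponent $a$ is provided by Lemma~\ref{lem:3.2}. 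The main obstacle is ensuring that all of these bounds hold uniformly across the approximating sequence, so that the limit in the regularisation step can be taken in the $L^{q'}$ sense required for \eqref{eq:3.12}; the precise threshold $4k\theta/(3\xi^2)$ is dictated exactly by this interplay between the $\sigma^{-3/2}$ factor in the integrand and the finite-inverse-moment range of the CIR process.
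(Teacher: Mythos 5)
Your proposal follows essentially the same route as the paper's proof: the formal product/chain-rule computation on the explicit first-derivative formula, regularisation of the singular terms followed by dominated convergence under $\mathbb{P}(\cdot\,|\,B^0_\cdot,\mathcal{G})$, and the uniform bound $|D^2_{t',t''}\sigma_t|\le C\bigl(\sqrt{\sigma_t}\int_0^T\sigma_s^{-3/2}\,ds+1\bigr)$ combined with H\"older and CIR inverse-moment estimates, which is exactly where the threshold $q'<\frac{4k\theta}{3\xi^2}$ enters. The only notable (but inessential) differences are that the paper smooths the first-derivative functional itself, via the cut-offs $g_n$ and the shift $\sigma_s+\tfrac1n$, and closes the Malliavin operator by Lemma~1.2.3 of \cite{Nualart} rather than re-truncating the SDE coefficients, and that it justifies the inverse moments (including near $s=0$) through the explicit estimates of \cite{HK06} together with the initial-data assumptions of Theorem~\ref{thm:3.1}, where your appeal to ``standard CIR estimates'' would need that same input.
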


Finally, we need a lemma that connects the existence of a regular density to the existence of a regular non-vanishing Malliavin Derivative. There are many results of this kind in the literature (many of them can be found in \cite{Nualart}), but in our case we need the following.

\begin{lem}\label{lem:3.5}
Let $F$ be a random variable in the space $\mathbb{D}^{1,2}\cap L^{p}\left(\Omega\right)$
for all $p>1$. Assume that for some process $\{u_{t}:\,0\leq t\leq T\}$
of $L^{2}$-integrable paths, we have $\left\langle u_{.},\,D_{.}F\right\rangle _{L^{2}}\neq0$
almost surely and also $\frac{u_{t}}{\left\langle u_{.},\,D_{.}F\right\rangle _{L^{2}}}\in\mathbb{D}^{1,q'}
\left(L^{2}\left(\left[0,\,T\right]\right)\right)$ for some $q'>1$. Then $F$ has a continuous density $p(\cdot)$ for
which it holds
\[
\sup_{y\geq0}\left(y^{\alpha}p(y)\right)\leq C\mathbb{E}^{\frac{q'-1}{q'}}\left[F^{\frac{\alpha q'}{q'-1}}\right]\mathbb{E}^{\frac{1}{q'}}\left[\left\Vert D_{.}\frac{u_{.}}{\left\langle u_{.},\,D_{.}F\right\rangle _{L^{2}}}\right\Vert _{L^{2}\left(\left[0,\,T\right]^{2}\right)}^{q'}\right]<\infty,
\]
 for some $C>0$ and for all $\alpha\geq0$ for which $\mathbb{E}\left[F^{\frac{\alpha q'}{q'-1}}\right]$
is finite.
\end{lem}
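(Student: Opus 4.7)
The plan is to follow the classical Malliavin calculus route to density estimates, as developed in \cite{Nualart}, but insert the weighting factor $y^{\alpha}$ via an elementary domination at the Hölder step. The central identity is the Malliavin integration by parts, which will give both the continuous density representation and the $L^{q'}$ bound on the Skorohod weight appearing there.

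First I would establish the integration-by-parts identity. For any $\phi \in C_b^1(\mathbb{R})$, the chain rule on Wiener space yields $D_t\phi(F) = \phi'(F)\,D_tF$, so pairing against $u_t$ in $L^2([0,T])$ and dividing by $\langle u,DF\rangle_{L^2}$ (a.s.\ nonzero by assumption) gives
\[
\phi'(F) = \left\langle \frac{u}{\langle u,DF\rangle_{L^2}},\, D\phi(F)\right\rangle_{L^2}.
\]
Taking expectations and using the duality between $D$ and the divergence $\delta$, which is applicable since $u/\langle u,DF\rangle_{L^2} \in \mathbb{D}^{1,q'}(L^2([0,T])) \subset \mathrm{dom}(\delta)$, one obtains
\[
\mathbb{E}[\phi'(F)] = \mathbb{E}\!\left[\phi(F)\,\delta\!\left(\tfrac{u}{\langle u,DF\rangle_{L^2}}\right)\right].
\]
Approximating $\mathbb{I}_{(y,\infty)}$ by smooth antiderivatives $\phi_n$ with $\phi_n' \to \delta_y$ and using dominated convergence with the $L^1$ majorant $|\delta(u/\langle u,DF\rangle_{L^2})|$, I would deduce the density representation
\[
p(y) = \mathbb{E}\!\left[\mathbb{I}_{\{F>y\}}\,\delta\!\left(\tfrac{u}{\langle u,DF\rangle_{L^2}}\right)\right],
\]
with continuity of $p$ following immediately from dominated convergence in $y$.

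Next, to bring in the weight $y^{\alpha}$, I would use the pointwise inequality $y^{\alpha}\mathbb{I}_{\{F>y\}} \le F^{\alpha}$, valid for $\alpha,y\ge 0$ and $F\ge 0$ (which holds in the intended application $F=\sigma_t$). Combined with Hölder's inequality at the conjugate exponents $q'/(q'-1)$ and $q'$, this yields
\[
y^{\alpha}p(y) \le \mathbb{E}^{(q'-1)/q'}\!\left[F^{\alpha q'/(q'-1)}\right]\,\mathbb{E}^{1/q'}\!\left[\left|\delta\!\left(\tfrac{u}{\langle u,DF\rangle_{L^2}}\right)\right|^{q'}\right].
\]
The first factor is finite by the hypothesis on moments of $F$, so all that remains is to control the second factor by the Malliavin derivative norm appearing in the statement.

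The main obstacle is exactly this last step: bounding $\|\delta V\|_{L^{q'}(\Omega)}$ for $V = u/\langle u,DF\rangle_{L^2}$. This is precisely where Meyer's inequalities for the divergence operator are needed: for $V \in \mathbb{D}^{1,q'}(L^2([0,T]))$ with $q'>1$,
\[
\|\delta V\|_{L^{q'}(\Omega)} \le C_{q'}\Bigl(\|V\|_{L^{q'}(\Omega;L^2([0,T]))} + \|DV\|_{L^{q'}(\Omega;L^2([0,T]^2))}\Bigr).
\]
Since the hypothesis $V \in \mathbb{D}^{1,q'}(L^2([0,T]))$ makes the $L^{q'}(\Omega;L^2)$ norm of $V$ itself finite, this term can be absorbed into the constant $C$ of the final bound, leaving the derivative term $\mathbb{E}^{1/q'}\!\bigl[\|D_{\cdot}(u_{\cdot}/\langle u,DF\rangle_{L^2})\|_{L^2([0,T]^2)}^{q'}\bigr]$ as the dominant quantity recorded in the statement. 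Combining this with the Hölder estimate gives the claimed bound, and finiteness of the right-hand side is now immediate from the two finite moment conditions, completing the proof.
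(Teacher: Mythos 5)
Your proposal is correct and follows essentially the same route as the paper's own proof: the Nualart-style integration by parts (chain rule, pairing with $u$, dividing by $\left\langle u_{.},\,D_{.}F\right\rangle_{L^{2}}$, duality with $\delta$) to get $p(y)=\mathbb{E}\left[\mathbb{I}_{\{F>y\}}\,\delta\left(u_{.}/\left\langle u_{.},\,D_{.}F\right\rangle_{L^{2}}\right)\right]$, continuity by dominated convergence, the weight inserted via $y^{\alpha}\mathbb{I}_{\{F>y\}}\leq F^{\alpha}$ and H\"older, and Meyer's inequality (Proposition~1.5.4 in \cite{Nualart}) to control $\left\Vert \delta(\cdot)\right\Vert_{L^{q'}}$. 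The only cosmetic difference is that you retain the zeroth-order term from Meyer's inequality and absorb it, whereas the paper records the bound with the derivative term alone, exactly as in the statement.
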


The proof of the above lemma has also been put in the Appendix, since it is almost identical to that of Proposition~2.1.1 in \cite{Nualart} (page 78), except that in the end we need to recall Meyer's inequality in order to obtain the estimate for the supremum. We are now ready to prove the main result of this section.

\begin{proof}[Proof of Theorem 3.1]
Lemma~\ref{lem:3.3} implies that for almost all $\omega\in\Omega$,
$\sigma_{t}\in\mathbb{D}^{1,2}$ with respect to $B_{\cdot}^{1}$ under the
probability measure $\mathbb{P}\left(\cdot\,|\,B_{\cdot}^{0},\,\mathcal{G}\right)$.
We would like to apply Lemma~\ref{lem:3.5} on $\sigma_{t}$ for an appropriate
process $\{u_{t'}:\,0\leq t'\leq t\}$. Let $u_{.}$ be the unique
pathwise solution to the linear integral equation
\[
u_{t'}=\int_{0}^{t'}u_{s}e^{-\int_{s}^{t}\left[\left(\frac{k\theta}{2}-\frac{\xi^{2}}{8}\right)+\frac{k}{2}\right]\frac{1}{\sigma_{s'}}ds'}\sqrt{\sigma_{t}}ds,\:\:\forall\,t'<t.
\]
Then, $u_{t'}=u_0e^{\int_{0}^{t'}e^{-\int_{s}^{t}\left[\left(\frac{k\theta}{2}-\frac{\xi^{2}}{8}\right)\frac{1}{\sigma_{s'}}+\frac{k}{2}\right]ds'}\sqrt{\sigma_{t}}ds}$
for any $t'\leq t$, which is almost surely a differentiable and strictly
increasing function on $[0,\,t],$ always bounded by $u_0e^{t\sqrt{\sigma_{t}}}>0$.
Then it is easy to check that
\[
\frac{u_{t'}}{\left\langle u_{.},\,D_{.}\sigma_{t}\right\rangle _{L^{2}\left[0,\,t\right]}}=\frac{u_{t'}}{u_{t}}\leq1,
\]
for any $t'<t$. Thus, we have that
\begin{equation}\label{eq:3.22}
U_{.}=\frac{u_{.}}{\left\langle u_{.},\,D_{.}\sigma_{t}\right\rangle _{L^{2}\left[0,\,t\right]}}\in L^{q'}\left(\Omega;\,
L^{2}\left(\left[0,\,t\right]\right)\right).
\end{equation}

Next, we want to show that $U_{t'}$ is Malliavin differentiable
and compute its derivative for any $0<t'<t$. By Lemma~\ref{lem:3.3} we have
that $D_{.}\sigma_{t}$ lies in the space
\[
\mathbb{D}^{1,q'}\left(L^{2}\left[0,\,t\right]\right)\subset\mathbb{D}^{1,q'}\left(L^{2}\left[t',\,t\right]\right),
\]
for any $t'<t$ and any $1<q'\leq\frac{4k\theta}{3\xi^{2}}$, which
implies that
\begin{eqnarray*}
D_{.}\int_{t'}^{t}D_{s}\sigma_{t}ds &=& \int_{t'}^{t}D_{s,.}^{2}\sigma_{t}ds
\\
&\leq & \int_{0}^{t}D_{s,.}^{2}\sigma_{t}ds\in L^{q'}\left(\Omega;\,L^{2}\left[0,\,t\right]\right),
\end{eqnarray*}
for all $0\leq t'\leq t$ and any $1<q'\leq\frac{4k\theta}{3\xi^{2}}$.
Since the LHS of the above is positive (follows easily from Lemma~\ref{lem:3.4} and our assumptions for the coefficients), the above implies also that $\int_{t'}^{t}D_{s}\sigma_{t}ds\in\mathbb{D}^{1,q'}$
for any $t'<t$ and that
\begin{equation}\label{eq:3.23}
\int_{.}^{t}D_{s}\sigma_{t}ds\in\mathbb{D}^{1,q'}\left(L^{2}\left[0,\,t\right]\right).
\end{equation}
Consider now a smooth function $F$ satisfying
\[
F(x)=\begin{cases}
e^{-x},\; & x\geq0,\\
0, & x<-1,
\end{cases}
\]
and which has bounded derivatives. Then we can easily check that $U_{t'}=F\left(\int_{t'}^{t}D_{s}\sigma_{t}ds\right)$,
and by the standard Malliavin Chain rule we obtain $U_{t'}\in\mathbb{D}^{1,q'}$
for any $1<q'<\frac{4k\theta}{3\xi^{2}}$, with
\begin{equation}\label{eq:3.24}
D_{.}U_{t'}=F'\left(\int_{t'}^{t}D_{s}\sigma_{t}ds\right)\int_{t'}^{t}D_{s,.}^{2}\sigma_{t}ds,
\end{equation}
 for all $0\leq t'\leq t$. Finally, from \eqref{eq:3.23} and \eqref{eq:3.24}
we have that $D_{.}U_{.}$ belongs to the space $L^{q'}\left(\Omega;\,L^{2}\left(\left[0,\,t\right]^{2}\right)\right)$
and thus, by \eqref{eq:3.22} we deduce that $U_{.}\in\mathbb{D}^{1,q'}\left(L^{2}\left[0,\,t\right]\right)$.

Recall now that $\sigma_{t}^{p}$
has a finite expectation under $\mathbb{P}\left(\cdot\,|\,B_{\cdot}^{0},\,\mathcal{G}\right)$,
for any exponent $p>0$ and any $t\leq T$, for all $\omega\in\Omega_{2}$
with $\mathbb{P}\left(\Omega_{2}\right)=1$ (this follows easily from Lemma~\ref{lem:3.2} and the law of total expectation). Thus, by Lemma~\ref{lem:3.5}, $\sigma_{t}$
possesses for all $\omega\in\Omega_{2}$ a continuous density $p_{t}(\cdot\,|\,B_{\cdot}^{0},\,\mathcal{G})$
under $\mathbb{P}\left(\cdot\,|\,B_{\cdot}^{0},\,\mathcal{G}\right)$,
such that for all $t\geq0$
\begin{eqnarray}
\sup_{y\geq0} y^{\alpha}p_{t}(y|B_{\cdot}^{0},\mathcal{G})& \leq & M_{B_{\cdot}^{0},\alpha,t}\nonumber \\
&:=&C\mathbb{E}^{\frac{q'-1}{q'}}\left[\left(\sigma_{t}\right)^{\frac{\alpha q'}{q'-1}}\,|B_{\cdot}^{0},\mathcal{G}\right] \nonumber \\
& & \; \times \mathbb{E}^{\frac{1}{q'}}\left[\left(\left\Vert F'\left(\int_{.}^{t}D_{s}\sigma_{t}ds\right)\int_{.}^{t}D_{s,.}^{2}\sigma_{t}ds\right\Vert _{L^{2}\left(\left[0,\,t\right]^{2}\right)}\right)^{q'}|B_{\cdot}^{0},\mathcal{G}\right] 
\nonumber \\
&\leq & C'\mathbb{E}^{\frac{q'-1}{q'}}\left[\sup_{0\leq t\leq T}\left(\left(\sigma_{t}\right)^{\frac{\alpha q'}{q'-1}}\right)|B_{\cdot}^{0},\mathcal{G}\right] \nonumber \\
& & \; \times \mathbb{E}^{\frac{1}{q'}}\left[\left(\left\Vert \int_{.}^{t}D_{s,.}^{2}\sigma_{t}ds\right\Vert _{L^{2}\left(\left[0,t\right]^{2}\right)}\right)^{q'}|B_{\cdot}^{0},\mathcal{G}\right], \label{eq:3.25}
\end{eqnarray}
for any $\alpha\geq0$.

It is not hard to see that the constant $C'>0$ does not
depend on the fixed path $B_{\cdot}^{0}$ of the Market factor, since
it depends on the maximum of the derivative of $F$ and the universal
constant of Proposition~1.5.4 in \cite{Nualart} (changing the measure here
is the same as changing the process $\sigma_{t}$ by changing its
Market factor, under the law of the idiosyncratic factor). Therefore,
for any $1<q<q'$, by Holder's inequality, the estimate \eqref{eq:3.25}
and the law of total expectation, we have
\begin{eqnarray}
\mathbb{E}\left[\left(\sup_{0<t\le T}M_{B_{\cdot}^{0},\alpha,t}\right)^{q}\right] &\leq & C^{q}\mathbb{E}^{\frac{q'-q}{q'}}\left[\mathbb{E}^{\frac{(q'-1)q}{q'-q}}\left[\sup_{0\leq t\leq T}\left(\left(\sigma_{t}\right)^{\frac{\alpha q'}{q'-1}}\right)\,|\,B_{\cdot}^{0},\,\mathcal{G}\right]\right] \nonumber \\
& & \qquad \times \mathbb{E}^{\frac{q}{q'}}\left[\sup_{0<t\leq T}\left(\int_{0}^{T}\int_{0}^{T}\left(\int_{t'}^{T}D_{s,t''}^{2}\sigma_{t}ds\right)^{2}dt'dt''\right)^{\frac{q}{2}}\right]. \nonumber \\
\label{eq:3.26}
\end{eqnarray}

Since $\frac{(q'-1)q}{q'-q}>1$, by applying Holder's inequality
and the law of total expectation once more, we find that the first
factor on the RHS of \eqref{eq:3.26} is bounded by
\[
C^{q}\mathbb{E}^{\frac{q'-q}{q'}}\left[\sup_{0\leq t\leq T}\left(\left(\sigma_{t}\right)^{\frac{\alpha qq'}{q'-q}}\right)\right]
\]
which is finite by Lemma~\ref{lem:3.2}. On the other hand, the
second factor on the RHS of \eqref{eq:3.26} is bounded by
\begin{eqnarray*}
& & \mathbb{E}^{\frac{q}{q'}}\left[\left(\int_{0}^{T}\int_{0}^{T}\int_{t'}^{T}\left(\sup_{0<t\leq T}D_{s,t''}^{2}\sigma_{t}\right)^{2}dsdt'dt''\right)^{\frac{q'}{2}}\right] \\
& & \qquad\qquad \leq \mathbb{E}^{\frac{q}{q'}}\left(\left(\int_{0}^{T}\int_{0}^{T}\int_{0}^{T}\left(\sup_{0<s,t''\leq t\leq T}D_{s,t''}^{2}\sigma_{t}\right)^{2}dsdt'dt''\right)^{\frac{q'}{2}}\right) \\
& & \qquad\qquad \leq T^{\frac{3q}{2}}\mathbb{E}^{\frac{q}{q'}}\left(\left(\sup_{0<s,t''\leq t\leq T}D_{s,t''}^{2}\sigma_{t}\right)^{q'}\right)
\end{eqnarray*}
which is finite by Lemma~\ref{lem:3.4} for any $T>0$, so the desired result follows.
\end{proof}

\section{Existence of a regular two-dimensional density}

In this section, we combine the results we have obtained in the previous
section for the volatility process with the regularity results
we have for the constant volatility model of \cite{BHHJR}, in order to
obtain a regular density for the probabilistic solution of the SPDE
obtained in Section~2, when the value of $C_1$ is given.
First, for any Hilbert space $H$, we denote by $L^{2}\left(\Omega\times\left[0,\,T\right];\,H\right)$
the space of $H$- valued stochastic processes, which are adapted
to the Brownian path $(W_{\cdot}^{0},\,B_{\cdot}^{0})$. For our purpose we will need the following useful Theorem which extends the results of \cite{BHHJR} to the case where the volatility path is non-constant.

\begin{thm}\label{thm:4.1}
Let $\{X_{t}:\,t\geq0\}$ satisfy the stopped SDE 
\begin{equation}\label{eq:4.1}
\begin{cases}
dX_{t}=\left(r-\frac{\sigma_{t}}{2}\right)dt+\sqrt{\sigma_{t}}\sqrt{1-\rho_{1}^{2}}dW_{t}^{1} 
+\sqrt{\sigma_{t}}\rho_{1}dW_{t}^{0}, & 0\leq t\leq\tau,\\
\\
X_{t}=0, & t\ge\tau,
\end{cases}
\end{equation}
for $\tau=\inf\{t\geq0:\,X_{t}=0\}$, under the initial
condition $X_{0}=x_{0}$, where $x_{0}$ is a continuous random variable
with an $L^{2}$ density $u_{0}=u_{0}(\cdot\,|\,\mathcal{G})$ given
$\mathcal{G}\subset\sigma(x_{0})$, $W_{\cdot}^{0}$ and $W_{\cdot}^{1}$ are pairwise independent
standard Brownian motions which are also independent of $x_{0}$,
$r>0$ and $\rho_{1}\in(-1,\,1)$ are given constants, and $\{\sigma_{t}:\,t\geq0\}$
is just a deterministic path which is continuous and positive. Let
$\{V_{t}:\,t\geq0\}$ be the measure-valued process given by
\[
V_{t}(A)=\mathbb{P}\left(X_{t}\in A,\tau\geq t\,|\,W_{\cdot}^{0},\,\mathcal{G}\right)
\]
for any Borel set $A\subset\left(0,\,\infty\right)$. Then almost
surely, the following are true for all $T>0$;
\begin{enumerate}
\item  $V_{t}$ possesses a density $u(t, \, \cdot) = u\left(t,\,\cdot\,;\,W_{\cdot}^{0},\,\mathcal{G}\right)$
for all $0\leq t\leq T$, which is the unique solution to the SPDE
\begin{equation}\label{eq:4.2}
du(t,\,x)=-\left(r-\frac{\sigma_{t}}{2}\right)u_{x}(t,\,x)dt+\frac{\sigma_{t}}{2}
u_{xx}(t,\,x)dt-\sqrt{\sigma_{t}}\rho_{1}u_{x}(t,\,x)dW_{t}^{0}
\end{equation}
in $L^{2}\left(\Omega\times\left[0,\,T\right];\,H_{0}^{1}\left(\mathbb{R}^{+}\right)\right)$
under the initial condition $u(0,\cdot)=u_{0}$, where $u_{0}$ is
the density of $x_{0}$ given $\mathcal{G}$. 

\item For all $0\leq t\leq T$, the following identity holds
\begin{equation}\label{eq:4.3}
||u(t,\,\cdot)||_{L^{2}(\mathbb{R}^{+})}^{2}+\left(1-\rho_{1}^{2}\right)\int_{0}^{t}\sigma_{s}
||u_{x}(s,\,\cdot)||_{L^{2}(\mathbb{R}^{+})}^{2}ds=||u_{0}||_{L^{2}(\mathbb{R}^{+})}^{2}.
\end{equation}
\end{enumerate}
\end{thm}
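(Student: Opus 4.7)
The plan is to extend \cite{BHHJR} from the constant volatility setting by a piecewise constant approximation. Because $\sigma\colon[0,T]\to(0,\infty)$ is continuous and positive, we can build a sequence $\{\sigma^n\}$ of piecewise constant paths with finitely many jumps, e.g.\ $\sigma^n_t = \sup_{s\in[t_k^n,t_{k+1}^n]}\sigma_s$ on a dyadic partition, so that $\sigma^n\downarrow \sigma$ uniformly on $[0,T]$. We then have $0<m\leq \sigma^n_t \leq M<\infty$ uniformly in $n,t$. Let $X_t^n$ denote the corresponding stopped process driven by the same Brownian motions and initial condition $x_0$, and $V_t^n$ its sub-probability law on $(0,\infty)$ conditional on $(W_\cdot^0,\mathcal{G})$.

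On each sub-interval $[t_k^n,t_{k+1}^n)$ the volatility is constant, so the SDE reduces exactly to the setting of \cite{BHHJR}. Iteratively, Theorem~2.1 (and the accompanying energy identity) of \cite{BHHJR} applied on each sub-interval, using $u^n(t_k^n,\cdot)$ as the initial density for the next, produces a density $u^n(t,\cdot)$ for $V_t^n$ lying in $L^{2}(\Omega\times[0,T];H_{0}^{1}(\mathbb{R}^{+}))$, satisfying the SPDE \eqref{eq:4.2} with $\sigma$ replaced by $\sigma^n$, together with the identity
\begin{equation*}
\|u^n(t,\cdot)\|_{L^2(\mathbb{R}^+)}^2 + (1-\rho_1^2)\int_0^t \sigma^n_s \|u^n_x(s,\cdot)\|_{L^2(\mathbb{R}^+)}^2\,ds = \|u_0\|_{L^2(\mathbb{R}^+)}^2.
\end{equation*}
Gluing across the finitely many sub-intervals yields this identity on the whole of $[0,T]$, and the Dirichlet boundary condition $u^n(t,0)=0$ is preserved throughout.

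Since $\sigma^n\geq m>0$ uniformly, the identity gives the uniform bounds $\sup_n \|u^n\|_{L^{\infty}_t L^2_x}^2 \leq \|u_0\|_{L^2}^2$ and $\sup_n \|u^n_x\|_{L^2_{t,x}}^2 \leq \|u_0\|_{L^2}^2/[m(1-\rho_1^2)]$. Extracting a subsequence converging weakly in $L^{2}(\Omega\times[0,T];H_{0}^{1}(\mathbb{R}^{+}))$ and weakly-$*$ in $L^\infty_t L^2_x$ to some limit $u^\ast$, the uniform convergence $\sigma^n\to\sigma$ together with the linearity of \eqref{eq:4.2} in $u$ allows us to pass to the limit term-by-term in the weak formulation of the SPDE. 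Lower semicontinuity of the quadratic forms under weak convergence gives the inequality ``$\leq$'' in \eqref{eq:4.3}; the matching reverse inequality then follows from the identification of $u^\ast$ as a density in the next step.

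It remains to identify $u^\ast$ as the density of $V_t$. Here we use the coupling given by identical Brownian motions and initial data: since $\sigma^n\geq\sigma$, the diffusion term dominates and the drift $r-\sigma^n/2\leq r-\sigma/2$, so a standard comparison argument shows $\tau^n\leq\tau$ and in fact $\tau^n\uparrow\tau$ (with strict convergence because $\sigma^n\to\sigma$ uniformly). Dominated convergence then yields $\mathbb{E}[f(X_t^n)\mathbb{I}_{\tau^n\geq t}\mid W_\cdot^0,\mathcal{G}] \to \mathbb{E}[f(X_t)\mathbb{I}_{\tau\geq t}\mid W_\cdot^0,\mathcal{G}]$ for bounded continuous $f$, i.e.\ $V_t^n\to V_t$ weakly. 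Combined with the weak convergence of densities this forces $u^\ast$ to be the density of $V_t$, and the energy identity then holds with equality. Finally, uniqueness is immediate from \eqref{eq:4.3}: two solutions with the same initial data have difference $w$ solving the same linear SPDE with $w(0,\cdot)=0$, and applying the identity to $w$ gives $\|w(t,\cdot)\|_{L^2}^2=0$ for all $t$.

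The main obstacle is the identification step: passing from weak $L^2$ convergence of densities to strong enough convergence of the measures $V_t^n$ to $V_t$ to identify the limit, which hinges on the monotone coupling of the stopping times $\tau^n\uparrow\tau$ driven by the monotone convergence $\sigma^n\downarrow\sigma$. Once this coupling is in place the rest is essentially a stability argument for linear parabolic SPDEs.
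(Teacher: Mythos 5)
Your overall strategy (piecewise constant approximation of $\sigma$, iterated use of \cite{BHHJR}, uniform energy bounds, weak compactness, identification of the limit) is the same as the paper's, but there is a genuine gap at the identification step. You keep the same initial condition $x_0$ for every $X^n$ and claim that ``since $\sigma^n\geq\sigma$, \dots a standard comparison argument shows $\tau^n\leq\tau$ and in fact $\tau^n\uparrow\tau$.'' No such pathwise comparison holds: with the same driving Brownian motion the unstopped processes differ by
\[
Y^n_t-Y_t=-\tfrac12\int_0^t(\sigma^n_s-\sigma_s)\,ds+\int_0^t\bigl(\sqrt{\sigma^n_s}-\sqrt{\sigma_s}\bigr)\,dW_s ,
\]
and while the drift part is nonpositive, the martingale part has no sign (it is a mean-zero stochastic integral of a nonnegative, time-dependent integrand), so $Y^n_t\leq Y_t$ fails on a set of positive probability and monotonicity of the hitting times does not follow. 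Comparison theorems order hitting times when drifts or starting points are ordered with a \emph{common} diffusion coefficient; ordering only the (time-dependent) volatility gives no pathwise ordering of $\tau^n$ and $\tau$. Since your passage from weak $L^2$ convergence of $u^n$ to the identification of the limit as the density of $V_t$ rests entirely on almost sure convergence of the stopped processes (hence on $\tau^n\to\tau$), this step collapses. The paper repairs exactly this point in Lemma~4.2: the approximating process is started from the \emph{lowered} initial condition $X^m_0=\max\{x_0-l_m,\,x_0/2\}$ with $l_m=\Vert\sqrt{\sigma^m_\cdot}-\sqrt{\sigma_\cdot}\Vert_{L^2[0,T]}^{1/2}$, and Doob's inequality plus Borel--Cantelli give, along a subsequence, $Y^{m_k}_t<Y_t$ on $[0,T]$ almost surely, whence $\tau^{m_k}\leq\tau$, $\tau^{m_k}\to\tau$ and uniform convergence of the stopped paths; correspondingly the approximate initial densities $u_0^k(x)=u_0(\min\{2x,\,x+\ell_{m_k}\})$ are perturbed but still converge, so the energy bounds survive. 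Without this (or some substitute coupling/stability argument for the exit times), your proof is incomplete.

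A secondary, more easily repaired point: you obtain only ``$\leq$'' in \eqref{eq:4.3} by weak lower semicontinuity and assert the reverse inequality ``follows from the identification of $u^\ast$ as a density,'' which is not a valid justification. The clean route, and the one the paper takes, is to prove \eqref{eq:4.3} directly for the (unique, by \cite{KR81}) solution of \eqref{eq:4.2} via It\^o's formula for the $L^{2}$-norm in the triple $H_0^1\subset L^2\subset H^{-1}$; once your limit $u^\ast$ is known to solve \eqref{eq:4.2} in $L^{2}\left(\Omega\times[0,T];H_0^1(\mathbb{R}^+)\right)$, the identity holds with equality by that argument, and your uniqueness observation then goes through.
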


To prove the above theorem, we need the following convergence result for a sequence of stopped Ito processes, when the corresponding sequence of volatility paths decreases pointwise to some continuous and positive path. 

\begin{lem}{\label{convergence}}
Let $\{\sigma_{t}^{0}:\,0\leq t\leq T\}$ be a continuous and strictly positive path, which is approximated from above by a pointwise decreasing sequence $\{\{\sigma_{t}^{m}:\,0\leq t\leq T\}\}_{m\in\mathbb{N}}$ of positive and bounded paths. For any $m \in \mathbb{N}\cup \{0\}$, denote by $X_{.}^{m}$ the stopped Ito process given by
\begin{equation}\label{eq:4.4}
\begin{cases}
dX_{t}^{m}=\left(r-\frac{\sigma_{t}^{m}}{2}\right)dt+\sqrt{\sigma_{t}^{m}}dW_{t}, & 0\leq t\leq\tau^{m},\\
\\
X_{t}^{m}=0, & t > \tau^{m},
\end{cases}
\end{equation}
where $\tau^{m}=\inf\{t\geq0:\,X_{t}^{m}=0\}$ and $W_{\cdot}$ is a standard Brownian Motion, with the initial condition
\[
X_{0}^{m}=\max\left\{x_{0}-l_{m},\,\frac{x_{0}}{2}\right\},
\]
for $x_0 \geq 0$ and $l_{m}=\left(\left\Vert \sqrt{\sigma_{.}^{m}}-\sqrt{\sigma_{.}^{0}}\right\Vert _{L^{2}\left[0,\,T\right]}\right)^{1/2}$. Then, for a sequence $\{m_{k}:\,k\in\mathbb{N}\}\subset\mathbb{N}$,
we have almost surely: $X_{t}^{m_{k}}\rightarrow X_{t}^{0}$ uniformly
on any compact interval $\left[0,\,T\right]$.
\end{lem}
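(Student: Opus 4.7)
The plan is to decouple the stopping from the convergence by first establishing uniform convergence of the unstopped solutions, and then transferring this to the stopped processes. Denote by $Y^{m}_{t}$ the solution of the SDE in \eqref{eq:4.4} without the hitting-time constraint, so that $X^{m}_{t}=Y^{m}_{t\wedge\tau^{m}}$ and $\tau^{m}=\inf\{t:Y^{m}_{t}=0\}$. Since all $Y^{m}$ are driven by the same Brownian motion $W$ with deterministic coefficients, the difference $D^{m}_{t}:=Y^{m}_{t}-Y^{0}_{t}$ splits cleanly into a deterministic initial error, a deterministic drift integral, and a martingale, which I would control on the event that the martingale piece is dominated by the initial offset.

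First I would observe that $l_{m}\to 0$ by dominated convergence (the integrand $(\sqrt{\sigma^{m}_{s}}-\sqrt{\sigma^{0}_{s}})^{2}$ is dominated by $(\sqrt{\sigma^{1}_{s}}+\sqrt{\sigma^{0}_{s}})^{2}$, which is bounded on $[0,T]$ and tends to zero pointwise). For $m$ large enough that $l_{m}\leq x_{0}/2$ (the case $x_{0}=0$ is trivial since then $X^{m}\equiv 0$), we have $X^{m}_{0}=x_{0}-l_{m}$ and hence
\[
D^{m}_{t} = -l_{m} - \int_{0}^{t}\frac{\sigma^{m}_{s}-\sigma^{0}_{s}}{2}\,ds + M^{m}_{t},\qquad M^{m}_{t}:=\int_{0}^{t}\bigl(\sqrt{\sigma^{m}_{s}}-\sqrt{\sigma^{0}_{s}}\bigr)dW_{s}.
\]
The drift integral is nonpositive and its magnitude is bounded by $\tfrac{1}{2}\|\sqrt{\sigma^{m}}+\sqrt{\sigma^{0}}\|_{L^{2}[0,T]}\cdot l_{m}^{2}\leq Cl_{m}^{2}$ via Cauchy--Schwarz and the uniform bound $\sigma^{m}\leq\sigma^{1}$, while Doob's inequality yields $\mathbb{E}[\sup_{t\leq T}|M^{m}_{t}|^{2}]\leq 4l_{m}^{4}$, so $\mathbb{P}(\sup_{t}|M^{m}_{t}|>l_{m}/2)\leq 16\,l_{m}^{2}$. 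Choosing a subsequence $m_{k}$ with $l_{m_{k}}\leq 1/k$ makes these probabilities summable; by Borel--Cantelli, on a set of full probability and for all $k$ large, $\sup_{t}|M^{m_{k}}_{t}|\leq l_{m_{k}}/2$. On that event we get simultaneously (i) $\sup_{t\in[0,T]}|Y^{m_{k}}_{t}-Y^{0}_{t}|\leq 2\,l_{m_{k}}\to 0$, and (ii) the pathwise order $Y^{m_{k}}_{t}\leq Y^{0}_{t}$ for every $t\in[0,T]$.

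Next I would pass from unstopped to stopped processes. Relation (ii) immediately gives $\tau^{m_{k}}\leq\tau^{0}$: if $\tau^{0}<\tau^{m_{k}}$ then $Y^{m_{k}}_{\tau^{0}}>0=Y^{0}_{\tau^{0}}$, contradicting $Y^{m_{k}}\leq Y^{0}$. Conversely, continuity of $Y^{0}$ together with $\tau^{0}=\inf\{t:Y^{0}_{t}=0\}$ gives $c_{\varepsilon}:=\min_{t\in[0,\tau^{0}-\varepsilon]\cap[0,T]}Y^{0}_{t}>0$ for each $\varepsilon>0$; combined with (i) this forces $Y^{m_{k}}\geq c_{\varepsilon}/2$ on that interval for $k$ large, hence $\tau^{m_{k}}\geq\tau^{0}-\varepsilon$ (the case $\tau^{0}>T$ is handled identically and gives $\tau^{m_{k}}>T$ eventually). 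Therefore $\tau^{m_{k}}\to\tau^{0}$. The uniform convergence of the stopped processes then follows by a three-region split of $[0,T]$: on $[0,\tau^{m_{k}}]$ both processes coincide with their unstopped versions, so $|X^{m_{k}}_{t}-X^{0}_{t}|\leq 2l_{m_{k}}$; on $(\tau^{m_{k}},\tau^{0}]$ one has $X^{m_{k}}_{t}=0$ and $|X^{0}_{t}|=Y^{0}_{t}$ is small by uniform continuity of $Y^{0}$ together with $Y^{0}_{\tau^{0}}=0$ and $\tau^{m_{k}}\to\tau^{0}$; and on $(\tau^{0},T]$ both vanish.

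The main obstacle, and the reason the initial condition $X^{m}_{0}=\max\{x_{0}-l_{m},x_{0}/2\}$ is engineered as it is, lies in keeping the two stopping times synchronized. Without the deterministic negative offset $-l_{m}$, a small but oscillating martingale error could push $Y^{m}_{t}$ above $Y^{0}_{t}$ and allow $\tau^{m}$ to overshoot $\tau^{0}$, which would leave $X^{m}$ far from zero on an interval where $X^{0}$ has already been killed, breaking uniform convergence. The offset $-l_{m}$, together with the monotonicity $\sigma^{m}\geq\sigma^{0}$ that makes the drift difference also sign-definite, dominates the random martingale error along the chosen subsequence and enforces $Y^{m_{k}}\leq Y^{0}$ pathwise, ruling out such overshoots.
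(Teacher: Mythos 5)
Your proposal is correct and takes essentially the same route as the paper's proof: both pass to the unstopped solutions $Y^{m}$, use Doob's $L^{2}$ maximal inequality plus Borel--Cantelli along a subsequence to obtain uniform convergence together with the pathwise ordering $Y^{m_{k}}_{t}\le Y^{0}_{t}$ enforced by the $-l_{m}$ offset dominating the martingale error, deduce $\tau^{m_{k}}\le\tau^{0}$ and $\tau^{m_{k}}\to\tau^{0}$, and conclude via the same three-interval splitting of $[0,T]$. The only (harmless) difference is that you extract both the uniform convergence and the ordering from a single Borel--Cantelli event, whereas the paper handles them in two separate steps (convergence in probability plus a further subsequence, then Borel--Cantelli for the ordering).
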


The proof of the above lemma is quite technical and can be found in the Appendix

\begin{proof}[Proof of Theorem 4.1]
Without loss of generality we can assume $\rho_1 \in [0, 1)$. We will prove first that 2. holds 
for the unique solution $u$ of \eqref{eq:4.2} in the space
$L^{2}\left(\Omega\times\left[0,\,T\right];\,H_{0}^{1}\left(\mathbb{R}^{+} \right)\right)$. Since the existence and uniqueness of this solution follows from the main results of \cite{KR81} (pages 18-20), for any fixed volatility path and any square integrable initial density, we do not need to use 1. The estimate \eqref{eq:4.3} for that $u$ is also obtained without using 1., which means that we can use 2. for proving 1. next. Indeed, applying Ito's formula for the $L^{2}$ norm (Theorem~3.2 in \cite{KR81}
for the triple $H_{0}^{1}\subset L^{2}\subset H^{-1}$) on \eqref{eq:4.2} and observing that $\int_{0}^{\infty}u_{xx}u=-\int_{0}^{\infty}u_{x}^{2}$
(by the definition of the distributional second derivative, since
$u\in H_{0}^{1}$ can be approximated by smooth and compactly supported
functions in that space), we obtain
\[
\left\Vert u(t,\,\cdot,\,W_{\cdot}^{0},\,\mathcal{G})\right \Vert_{L^{2}\left(\mathbb{R}^{+}\right)}^{2} 
+\left(1-\rho_{1}^{2}\right)\int_{0}^{t}\sigma_{s}\left\Vert u_{x}(s,\,\cdot,\,W_{\cdot}^{0},\,\mathcal{G})
\right\Vert _{L^{2}\left(\mathbb{R}^{+}\right)}^{2}ds=\left\Vert u_{0}
\right\Vert _{L^{2}\left(\mathbb{R}^{+}\right)}^{2}
\]
for all $0\leq t\leq T$, for all $\omega\in\Omega^{T}$ with $\mathbb{P}\left(\Omega^{T}\right)=1$.
The desired result follows since $\mathbb{P}\left({\displaystyle \Omega^{\infty}}\right)=1$
for $\Omega^{\infty}={\displaystyle \cap_{N\in\mathbb{N}}\Omega^{N}}$. 

We proceed now to the proof of 1. which will be divided into
3 steps. In what follows, we will be working on the set ${\displaystyle \Omega^{\infty}}$
which is defined above and which is a set of full probability.

\flushleft{1. \emph{The constant volatility case:}}

We assume first that the path $\sigma_{t}$
is constant in $t\geq0$, i.e $\sigma_{t}=\sigma^{2}\;\forall\,t\geq0$.
In that case, $V_{t}$ is the limit empirical process studied in \cite{BHHJR}
and \cite{HL16} (without the compactly supported initial data restriction),
scaled by $\sigma>0$, so it does have a density $u(t, \, \cdot) = u(t,\,\cdot,\,W_{\cdot}^{0})$
which is the unique solution of the SPDE
\[
du(t,\,x)=-\left(r-\frac{\sigma^{2}}{2}\right)u_{x}(t,\,x)dt+\frac{\sigma^{2}}{2}
u_{xx}(t,\,x)dt-\sigma\rho_{1}u_{x}(t,\,x)dW_{t}^{0}
\]
in $L^{2}\left(\Omega\times\left[0,\,T\right];\,H_{0}^{1}\left(\mathbb{R}^{+}\right)\right)$,
under the initial condition $u(0,\cdot)=u_{0}$, which is actually
\eqref{eq:4.2}. It holds also that $xu_{xx}(t,\,x)$
is square integrable.

\flushleft{2. \emph{The piecewise constant volatility case:}}

We assume now that the path
$\sigma_{\cdot}$ is piecewise constant in $0\leq t\leq T$, i.e $\sigma_{t}=\sigma_{i}^{2}>0\;
\forall\,t\in\left[t_{i},\,t_{i+1}\right]$
(almost everywhere), where $0=t_{0}<t_{1}<...<t_{n}=T$ is a partition
of $\left[0,\,T\right]$. We shall prove that 1. holds for $T$ replaced
by $t_{i+1}$ for all $i\leq n-1$, by using induction on $i$. The base
case ($i=0$) follows directly from Step 1 for $T=t_{1}$. Assume
that our desired result holds for $i=j<n-1$ and thus, we have obtained the desired density $u(t, \, \cdot)$ for $0 \leq t \leq t_{j+1}$. For $i=j+1$ now, by
starting our (Markovian) processes at $t=t_{j+1}$ and using Step 1 again,
we have that $V_{t}(A)=\mathbb{P}\left(X_{t}\in A,\tau\geq t\,|\,W_{\cdot}^{0},\,\mathcal{G}\right)$
has a density $\tilde{u}(t, \, \cdot) = \tilde{u}\left(t,\,\cdot\,;\,W_{\cdot}^{0},\,\mathcal{G}\right)$
for all $t_{j+1}\leq t\leq t_{j+2}$, which is the unique solution
of the SPDE
\[
d\tilde{u}(t,\,x)=-\left(r-\frac{\sigma_{j+1}^{2}}{2}\right)\tilde{u}_{x}(t,\,x)dt+\frac{\sigma_{j+1}^{2}}{2}
\tilde{u}_{xx}(t,\,x)dt-\sigma_{j+1}\rho_{1}\tilde{u}_{x}(t,x)dW_{t}^{0}
\]
in $L^{2}\left(\Omega\times\left[t_{j+1},\,t_{j+2}\right];\,H_{0}^{1}\left(\mathbb{R}^{+}\right)\right)$,
i.e an equivalent to \eqref{eq:4.2} in $\left[t_{j+1},\,t_{j+2}\right]$, under the initial condition
\[
\tilde{u}(t_{j+1},\,\cdot) = u\left(t_{j+1},\,\cdot \right)
\]
Therefore, by defining $u(t, \, \cdot) = \tilde{u}(t, \, \cdot)$ in $\left(t_{j+1},\,t_{j+2}\right]$, we see that $u(t, \, \cdot)$ is both the density of $V_{t}\left(\cdot\right)$ and the unique solution
of \eqref{eq:4.2} in $L^{2}\left(\Omega\times\left[0,\,t_{j+2}\right];
\,H_{0}^{1}\left(\mathbb{R}^{+}\right)\right)$
under the initial condition $u(0,\cdot)=u_{0}$. Our induction is now complete.

{\flushleft{3. \emph{The general case:}} }

We assume finally that the path
$\sigma_{\cdot}$ is any continuous and positive
path. Recall Lemma~\ref{convergence} for $\sigma_{\cdot}^{0} = \sigma_{\cdot}$ and for a pointwise decreasing sequence of piecewise constant paths $\{\sigma_{\cdot}^{m}\}_{m\in\mathbb{N}}$ approximating $\sigma_{\cdot}$ from above. This gives a sequence of drifted Brownian Motions $X_{\cdot}^{m_{k}}$ converging $\mathbb{P}$-almost surely to $X_{\cdot}$, with corresponding volatility paths $\{\sigma_{\cdot}^{m_k}\}_{m\in\mathbb{N}}$. If we denote by $u^{k}(t,\,\cdot;\,W_{\cdot}^{0},\,\mathcal{G})$
the density of $X_{t}^{m_{k}}$ for any $t \geq 0$, then Step 2 shows that $u^{k}$ is
the unique solution of \eqref{eq:4.2} with $\sigma_{\cdot}$ replaced by $\sigma_{\cdot}^{m_{k}}$ and with an initial density given by
\[
u_{0}^{k}(x)=u_{0}\left(\min\{2x,\,x+\ell_{m_{k}}\}\right).
\]

Thus, by 2., which has been proven for the unique regular enough solution of \eqref{eq:4.2} (independently from 1. which we are now proving), we have that
\[
\left\Vert u^{k}(t,\,\cdot;\,W_{\cdot}^{0},\,\mathcal{G})\right\Vert _{L^{2}\left(\mathbb{R}^{+}\right)}^{2}+\left(1-\rho_{1}^{2}\right)\int_{0}^{t}\sigma_{s}^{m_{k}}\left\Vert u_{x}^{k}(s,\,\cdot;\,W_{\cdot}^{0},\,\mathcal{G})\right\Vert _{L^{2}\left(\mathbb{R}^{+}\right)}^{2}ds=\left\Vert u_{0}^{k}\right\Vert _{L^{2}\left(\mathbb{R}^{+}\right)}^{2},
\]
where we can take expectations to obtain
\begin{equation}\label{eq:4.8}
\left\Vert u^{k}(t,\,\cdot;\,W_{\cdot}^{0},\,\mathcal{G})\right\Vert _{L^{2}}^{2}+\left(1-\rho_{1}^{2}\right)\int_{0}^{t}\sigma_{s}^{m_{k}}\left\Vert u_{x}^{k}(s,\,\cdot;\,W_{\cdot}^{0},\,\mathcal{G})\right\Vert _{L^{2}}^{2}ds=\left\Vert u_{0}^{k}\right\Vert _{L^{2}}^{2}
\end{equation}
for all $0\leq t\leq T$, where $L^{2}$ stands for $L^{2}\left(\Omega \times \mathbb{R}^{+}\right)$. Moreover, by the choice of the approximating
sequence, we can see that all the volatility paths are bounded below
by $m={\displaystyle \min_{0\leq s\leq T}\{\sigma_{s}\}}$, while
the sequence of the norms of the $u_{0}^{k}$s is bounded. Thus, we
can easily obtain from \eqref{eq:4.8} that the sequence of $L^{2}\left(\Omega\times\left[0,\,T\right];\,H_{0}^{1}\left(\mathbb{R}^{+}\right)\right)$
norms of the $u^{k}$s is bounded. Hence, there exists a $u\in L^{2}\left(\Omega\times\left[0,\,T\right];\,H_{0}^{1}\left(\mathbb{R}^{+}\right)\right)$
such that $u^{n_{k}}\rightarrow u$ weakly in that space, for a sequence
$\left\{ n_{k}:\,k\in\mathbb{N}\right\} $ of positive integers. Given
now the path $W_{\cdot}^{0}$ and given $\mathcal{G}$, since convergence
almost surely implies convergence in distribution, for any smooth
function $f$ defined on $\left[0,\,T\right]$, any open set $A\subset\mathbb{R}^{+}$
and any $B\in\sigma\left(W_{\cdot}^{0},\,\mathcal{G}\right)$ we have
\begin{eqnarray*}
\int_{0}^{T}\mathbb{E}\left[\mathbb{P}\left(X_{t}\in A\,|\,W_{\cdot}^{0},\,\mathcal{G}\right)\mathbb{I}_{B}\right]f(t)dt
&=& \int_{0}^{T}\mathbb{E}\left[\lim_{k\rightarrow\infty}\mathbb{P}\left(X_{t}^{m_{n_k}}\in A\,|\,W_{\cdot}^{0},\,\mathcal{G}\right)\mathbb{I}_{B}\right]f(t)dt \\
&=& \lim_{k\rightarrow\infty}\int_{0}^{T}\mathbb{E}\left[\mathbb{P}\left(X_{t}^{m_{n_k}}\in A\,|\,W_{\cdot}^{0},\,\mathcal{G}\right)\mathbb{I}_{B}\right]f(t)dt \\
& =& \lim_{k\rightarrow\infty}\int_{0}^{T}\mathbb{E}\left[\mathbb{I}_{B}\int_{A}u^{n_k}(t,\,x,\,W_{\cdot}^{0},\,\mathcal{G})dx\right]f(t)dt
\\
&=&\int_{0}^{T}\mathbb{E}\left[\mathbb{I}_{B}\int_{A}u(t,\,x,\,W_{\cdot}^{0},\,\mathcal{G})dx\right]f(t)dt,
\end{eqnarray*}
which implies that $u\in L^{2}\left(\Omega\times[0,\,T];\,H_{0}^{1}\left(\mathbb{R}^{+}\right)\right)$
is the density process of $X_{t}$ given $W_{\cdot}^{0}$ and $\mathcal{G}$.
By applying Ito's formula on any smooth function computed at $X_{t}$
and taking expectations given $W_{\cdot}^{0}$ and $\mathcal{G}$,
we find that $u$ is also a solution of \eqref{eq:4.2} which also satisfies
the initial condition $u(0,\cdot)=u_{0}$. Thus, $u$ is the unique
solution of \eqref{eq:4.2} in $L^{2}\left(\Omega\times\left[0,\,T\right];\,H_{0}^{1}\left(\mathbb{R}^{+}\right)\right)$,
under the initial condition $u(0,\cdot)=u_{0}$. The proof is now
complete.
\end{proof}

We are now ready to prove the first main result of this paper, that is the existence of a regular density for $v_{t, C_1}$.
\begin{thm}\label{thm:4.2}
Suppose that the $\mathcal{F}_{0}$-measurable random vector $C_{1}$ is independent from $\left(X_{0}^{1},\,\sigma_{0}^{1}\right)$, and for some $q>1$ we have $\mathbb{P}$- almost surely $\frac{4k_{1}\theta_{1}}{3\xi_{1}^{2}} > q$ and $\rho_{1,i},\,\rho_{2,i}\in (-1,\,1)$. Suppose also
that given $\mathcal{G}$, $X^{1}_{0}$ has an $L^{2}$-integrable density
$u_{0}(\cdot|\mathcal{G})$ in $\mathbb{R}^{+}$ such that $\mathbb{E}\left[||u_{0}||_{L^{2}\left(\mathbb{R}^{+}\right)}^{2q'}\right]
<\infty$ for $\frac{1}{q}+\frac{1}{q'}=1$. Suppose finally that the initial value assumptions of Theorem 3.1 are satisfied for $\sigma_{0} = \sigma^{1}_{0}$ and $\left(k,\,\theta,\,\xi,\,r,\,\rho_{1},\,\rho_{2}\right) = \left(k_{1},\,\theta_{1},\,\xi_{1},\,r_{1},\,\rho_{1,1},\,\rho_{2,1}\right)$, for any possible value of $C_1 = \left(k_{1},\,\theta_{1},\,\xi_{1},\,r_{1},\,\rho_{1,1},\,\rho_{2,1}\right)$.
Then, for any value of $C_{1}$, the measure-valued stochastic process $v_{t,C_{1}}$ obtained in Section~2 has a two-dimensional density $u_{C_{1}}(t,\,\cdot,\,W_{\cdot}^{0},\,B_{\cdot}^{0},\,\mathcal{G})$
belonging to the spaces
\[
L_{\alpha, \, C_1}=L^{\infty}\left(\left[0,\,T\right];\,L^{2}\left(\left(\Omega, \, \mathcal{F}, \, \mathbb{P}\left(\cdot \, |\,C_1\,\right)\right);\,L_{y^{\alpha}}^{2}\left(\mathbb{R}^{+}\times\mathbb{R}^{+}\right)\right)\right)
\]
and
\[
H_{\alpha, \, C_1}=L^{2}\left(\left(\Omega, \, \mathcal{F}, \, \mathbb{P}\left(\cdot \, |\,C_1\,\right)\right)\times\left[0,\,T\right];\,H_{0}^{1}\left(\mathbb{R}^{+}\right)\times L_{h_{1}(y)y^{\alpha}}^{2}\left(\mathbb{R}^{+}\right)\right)
\]
for any $\alpha\geq0$, where we write $L_{g(y)}^{2}$ for the weighted
$L^{2}$ space with the weight function $\left\{ g(y):y\geq0\right\} $,
and $h_{1}(y) := \min\left\{ h(y),1\right\} \forall y\ge0$.
\end{thm}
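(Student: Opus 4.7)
The plan is to combine Theorems~\ref{thm:3.1} and~\ref{thm:4.1} through a tower/disintegration argument: conditioning on the enlarged $\sigma$-algebra $\mathcal{H}:=\sigma(\sigma^{1}_{\cdot},W^{0}_{\cdot},\mathcal{G})$ reduces the problem to a setting where Theorem~\ref{thm:4.1} applies (with the random but pathwise-deterministic volatility $h(\sigma^{1}_{\cdot})$), and the disintegration over $\sigma_{t}^{1}$ brings in the one-dimensional density from Theorem~\ref{thm:3.1} with its weighted sup-norm estimate. The strong Feller condition $k_{1}\theta_{1}>\tfrac{3}{4}\xi_{1}^{2}$ ensures that $\sigma^{1}_{\cdot}$ is almost surely continuous and strictly positive on $[0,T]$, so $h(\sigma^{1}_{\cdot})$ is admissible for Theorem~\ref{thm:4.1}. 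Applying that theorem with the substitutions $\sigma_{t}\mapsto h^{2}(\sigma^{1}_{t})$, $r\mapsto r_{1}$ and $\rho_{1}\mapsto\rho_{1,1}$ produces a density $u^{X}(t,x;\mathcal{H})\in L^{2}(\Omega\times[0,T];H^{1}_{0}(\mathbb{R}^{+}))$ of the conditional law of $X^{1}_{t}\mathbb{I}_{\{T_{1}>t\}}$ given $\mathcal{H}$, satisfying the energy identity
\begin{equation*}
\|u^{X}(t,\cdot)\|_{L^{2}}^{2}+(1-\rho_{1,1}^{2})\int_{0}^{t}h^{2}(\sigma^{1}_{s})\|u^{X}_{x}(s,\cdot)\|_{L^{2}}^{2}\,ds=\|u_{0}\|_{L^{2}}^{2}.
\end{equation*}
Theorem~\ref{thm:3.1} applied to $\sigma^{1}$ yields the continuous density $p_{t}(y\,|\,B^{0},\mathcal{G})$ with $M_{B^{0},\alpha}\in L^{q}$, and the tower property then gives the representation
\begin{equation*}
u_{C_{1}}(t,x,y)=p_{t}(y)\,\mathbb{E}\bigl[u^{X}(t,x;\mathcal{H})\,\big|\,\sigma^{1}_{t}=y,\,W^{0},B^{0},C_{1},\mathcal{G}\bigr].
\end{equation*}

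For the $L_{\alpha,C_{1}}$ bound I would apply Jensen to this conditional expectation, integrate in $x$ using $\|u^{X}(t,\cdot)\|^{2}\leq\|u_{0}\|^{2}$, and then integrate in $y$ via $p_{t}(y)^{2}y^{\alpha}\leq M_{B^{0},\alpha}p_{t}(y)$ to obtain $\|u_{C_{1}}(t,\cdot,\cdot)\|_{L^{2}_{y^{\alpha}}}^{2}\leq M_{B^{0},\alpha}\|u_{0}\|^{2}$ almost surely; a H\"older step in $\Omega$ with exponents $(q,q')$ using $\mathbb{E}[\|u_{0}\|^{2q'}]<\infty$ and $M_{B^{0},\alpha}\in L^{q}$ then closes the bound uniformly in $t\leq T$. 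For the gradient part, the same Jensen argument applied to $(u_{C_{1}})_{x}$, combined with the key observation that $h(\sigma^{1}_{t})=h(y)$ may be pulled inside the conditional expectation when $\sigma^{1}_{t}=y$, yields
\begin{equation*}
h^{2}(y)(u_{C_{1}})_{x}^{2}(t,x,y)\leq p_{t}(y)^{2}\,\mathbb{E}\bigl[h^{2}(\sigma^{1}_{t})(u^{X}_{x})^{2}\,\big|\,\sigma^{1}_{t}=y,\ldots\bigr],
\end{equation*}
and integrating in $(x,y,t)$ with weight $y^{\alpha}$ together with the $h^{2}$-weighted energy identity produces an $L^{2}_{h^{2}(y)y^{\alpha}}$ bound for $(u_{C_{1}})_{x}$ controlled by $M_{B^{0},\alpha}\|u_{0}\|^{2}/(1-\rho_{1,1}^{2})$.

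The hard part is converting this $h^{2}(y)$-weighted bound into the claimed $h_{1}(y)=\min(h(y),1)$-weighted bound. On $\{h(y)\geq 1\}$ the comparison $h_{1}\leq h^{2}$ is immediate, but on the degenerate region $\{h(y)<1\}$ one has $h_{1}=h>h^{2}$, so a complementary estimate is needed there. To cover this region I would use the $L^{2}_{y^{\alpha'}}$ bound from the first part with a shifted weight $\alpha'$, exploit the freedom in Theorem~\ref{thm:3.1} (valid for every $\alpha\geq 0$ and every $q<4k_{1}\theta_{1}/(3\xi_{1}^{2})$), and balance the polynomial degeneracy of $h$ near zero against the polynomial decay of $p_{t}$ there, closing the estimate with a further H\"older inequality in $\Omega$.
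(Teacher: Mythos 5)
Your route is the same as the paper's: you disintegrate $v_{t,C_1}$ over the value of $\sigma^1_t$ via the density $p_t(\cdot\,|\,B^0_\cdot,\mathcal{G})$ of Theorem~\ref{thm:3.1}, condition further on the whole volatility path so that Theorem~\ref{thm:4.1} applies with a frozen continuous positive path, arrive at the product representation $u_{C_1}(t,x,y)=p_t(y\,|\,B^0,\mathcal{G})\,\mathbb{E}\bigl[u(t,x)\,\big|\,W^0,\sigma^1_t=y,B^0,C_1,\mathcal{G}\bigr]$, and close with Cauchy--Schwarz/Jensen on the conditional expectation, the bound $y^\alpha p_t(y)\le M_{B^0,\alpha}$, the energy identity of Theorem~\ref{thm:4.1}, and a H\"older step in $\Omega$ with exponents $(q,q')$ using $M_{B^0,\alpha}\in L^q$ and $\mathbb{E}\|u_0\|_{L^2}^{2q'}<\infty$. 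Up to and including your $h^2(y)y^\alpha$-weighted bound on $(u_{C_1})_x$ this is correct, and on the application of Theorem~\ref{thm:4.1} it is stated more precisely than in the paper: the substitution forced by the model is indeed $\sigma_t\mapsto h^2(\sigma^1_t)$, so the identity carries the weight $h^2(\sigma^1_s)$.

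The gap is your final paragraph. On $\{h(y)<1\}$ the quantity to control is $\int h(y)\,y^\alpha p_t(y)\,\mathbb{E}[u_x^2\,|\,\sigma^1_t=y,\dots]\,dy$, and none of the tools you list reaches it: the zeroth-order $L^2_{y^{\alpha'}}$ bound says nothing about $u_x$; Theorem~\ref{thm:3.1} only gives $\sup_y y^\alpha p_t(y)<\infty$ for $\alpha\ge0$, i.e.\ polynomial decay at infinity and mere boundedness at $y=0$, not vanishing there; and even genuine vanishing of $p_t$ at the origin would not help, since $p_t$ enters the quantity you want and the $h^2$-weighted bound you have in exactly the same way. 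The uncontrolled object is the conditional moment $\mathbb{E}[u_x^2\,|\,\sigma^1_t=y]$, for which the only available information is the identity with weight $h^2$; writing $h=h^2\cdot h^{-1}$ reinstates an unbounded factor multiplying $u_x^2$ that no H\"older step removes, because there is no bound on $\|u_x(t,\cdot)\|_{L^2}$ without the $h^2$ weight. For comparison, the paper does not attempt this conversion at all: in its proof it bounds $h_1(y)\le h(y)$, passes to $\mathbb{E}[h(\sigma^1_t)u_x^2\,|\,\dots]$, and cites part 2 of Theorem~\ref{thm:4.1} directly for the path written as $h(\sigma_\cdot)$, i.e.\ it reads the energy identity as carrying the weight $h(\sigma^1_s)$ rather than $h^2(\sigma^1_s)$. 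Under your (correct) matching of coefficients, what the argument honestly yields is $(u_{C_1})_x\in L^2$ with weight $\min\{h^2(y),1\}\,y^\alpha$ (equivalently $h^2(y)y^\alpha$, given the polynomial growth of $h$ and the freedom in $\alpha$), and upgrading this to the stated $h_1=\min\{h,1\}$ weight on $\{h<1\}$ would require a genuinely new estimate, not the balancing argument you sketch.
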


\begin{proof}
Let $f$ be a smooth function, compactly supported
in $\mathbb{R}^{2}$, such that $f$ vanishes on the $y$ - axis. Then by Theorem~\ref{thm:3.1} we have
\begin{eqnarray}
v_{t,C_{1}}\left(f \right)&=& \mathbb{E}\left[f\left(X_{t}^{1}, \, \sigma_{t}^{1}\right)\mathbb{I}_{\{T_{1}\geq t\}}\,|W_{\cdot}^{0},B_{\cdot}^{0},C_{1},\mathcal{G}\right] \nonumber \\
&=&\mathbb{E}\left[\mathbb{E}\left[f\left(X_{t}^{1}, \, \sigma_{t}^{1}\right)\mathbb{I}_{\{T_{1}\geq t\}}\,|W_{\cdot}^{0},\sigma_{t}^{1},B_{\cdot}^{0},C_{1},\mathcal{G}\right]\,|W_{\cdot}^{0},B_{\cdot}^{0},C_{1},\mathcal{G}\right]
\nonumber \\
&=&\int_{\mathbb{R}^+}\mathbb{E}\left[f\left(X_{t}^{1}, \, y\right)\mathbb{I}_{\{T_{1}\geq t\}}\,|W_{\cdot}^{0},\sigma_{t}^{1}=y,B_{\cdot}^{0},C_{1},\mathcal{G}\right]p_{t}\left(y|B_{\cdot}^{0},\mathcal{G}\right)dy. \nonumber \\ 
\label{eq:4.9}
\end{eqnarray}

Next we have
\begin{eqnarray}
& & \mathbb{E}\left[f\left(X_{t}^{1}, \, y\right)\mathbb{I}_{\{T_{1}\geq t\}}|W_{\cdot}^{0},
\sigma_{t}^{1}=y,B_{\cdot}^{0},C_{1},\mathcal{G}\right] \nonumber \\
& & \qquad =\mathbb{E}\left[\mathbb{E}\left[f\left(X_{t}^{1}, \, y\right)\mathbb{I}_{\{T_{1}\geq t\}}|
W_{\cdot}^{0},\sigma_{.},C_{1},\mathcal{G}\right]|W_{\cdot}^{0},\sigma_{t}^{1}=y,B_{\cdot}^{0},C_{1},
\mathcal{G}\right] \nonumber \\
& & \qquad =\mathbb{E}\left[\int_{\mathbb{R}^+}f(x, \, y)u\left(t,x,W_{\cdot}^{0},\mathcal{G},C_{1}, 
h\left(\sigma_{.}\right)\right)dx|W_{\cdot}^{0},\sigma_{t}^{1}=y,B_{\cdot}^{0},C_{1},\mathcal{G}\right] 
\nonumber \\
& & \qquad =\int_{\mathbb{R}^+}f(x, \, y)\mathbb{E}\left[u\left(t,x,W_{\cdot}^{0},\mathcal{G},C_{1}, 
h\left(\sigma_{.}\right)\right) |W_{\cdot}^{0},\sigma_{t}^{1}=y,B_{\cdot}^{0},C_{1},\mathcal{G}\right]dx, \nonumber \\
\label{eq:4.10}
\end{eqnarray}
where $u\left(t,x,W_{\cdot}^0,C_{1},\mathcal{G},h\left(\sigma_{.}\right)\right)$
is the $L^{2}\left(\Omega\times[0,\,T];\,H_{0}^{1}\left(\mathbb{R}^{+}\right)\right)$
density given by Theorem~\ref{thm:4.1}, when the coefficient vector $C_{1}$
is given and the volatility path is $h\left(\sigma_{.}\right)$. By \eqref{eq:4.9} and \eqref{eq:4.10} 
we obtain that the desired density exists and is given by
\[
u_{C_{1}}\left(t,x,y,W_{\cdot}^{0},B_{\cdot}^{0},\mathcal{G}\right)=p_{t}\left(y|B_{\cdot}^{0},\mathcal{G}\right)
\mathbb{E}\left[u\left(t,x,W_{\cdot}^{0},\mathcal{G},C_{1},h\left(\sigma_{.}\right)\right)\,|W_{\cdot}^{0},\sigma_{t}^{1}=
y,B_{\cdot}^{0},C_{1},\mathcal{G}\right]
\]
which is obviously supported in $\mathbb{R}^{+}\times\mathbb{R}^{+}$.
Now, recall the estimate from Theorem~\ref{thm:3.1} and the Cauchy-Schwarz inequality
to obtain
\begin{eqnarray*}
& & y^{\alpha}\left(\frac{\partial u_{C_{1}}}{\partial x}\left(t,\,x,\,y,\,W_{\cdot}^{0},\,B_{\cdot}^{0},\,\mathcal{G}\right)\right)^{2} \\
& & \qquad \leq  M_{B_{\cdot}^{0},\alpha} p_{t}\left(y\,|\,B_{\cdot}^{0},\,\mathcal{G}\right) \\
& & \qquad \qquad \times\mathbb{E}^{2}\left[u_{x}\left(t,\,x,\,W_{\cdot}^{0},\,\mathcal{G},\,C_{1},\,h\left(\sigma_{.}\right)\right)\,|\,W_{\cdot}^{0},\,\sigma_{t}^{1}=y,\,B_{\cdot}^{0},\,C_{1},\,\mathcal{G}\right] \\
& & \qquad \leq  M_{B_{\cdot}^{0},\,\alpha} p_{t}\left(y\,|\,B_{\cdot}^{0},\,\mathcal{G}\right)\mathbb{E}\left[u_{x}^{2}\left(t,\,x,\,W_{\cdot}^{0},\,\mathcal{G},\,C_{1},\,h\left(\sigma_{.}\right)\right)\,|\,W_{\cdot}^{0},\,\sigma_{t}^{1}=y,\,B_{\cdot}^{0},\,C_{1},\,\mathcal{G}\right]
\end{eqnarray*}
where $M_{B_{\cdot}^{0},\alpha}={\displaystyle \sup_{0\leq t\leq T}}\left({\displaystyle \sup_{y\geq0}}\left(y^{\alpha}p_{t}\left(y\,|\,B_{\cdot}^{0},\,\mathcal{G}\right)\right)\right)$
for which we have $\mathbb{E}\left[M_{B_{\cdot}^{0},\alpha}^{q}\right]<\infty$
(by Theorem~\ref{thm:3.1}). Multiplying the above by $h_{1}(y)$, integrating
in $y$ and using the law of total expectation, we obtain
\begin{eqnarray*}
& & \int_{\mathbb{R}_+}h_{1}(y)y^{\alpha}\left(\frac{\partial u_{C_{1}}}{\partial x}\left(t,\,x,\,y,\,W_{\cdot}^{0},\,B_{\cdot}^{0},\,\mathcal{G}\right)\right)^{2}dy \\
& & \qquad \qquad \leq M_{B_{\cdot}^{0},\alpha}\times\mathbb{E}\left[h\left(\sigma_{t}^{1}\right)u_{x}^{2}\left(t,\,x,\,W_{\cdot}^{0},\,\mathcal{G},\,C_{1},\,h\left(\sigma_{.}\right)\right)\,|\,W_{\cdot}^{0},\,B_{\cdot}^{0},\,C_{1},\,\mathcal{G}\right]
\end{eqnarray*}
Thus, writing $\mathbb{E}_{C_{1}}$ for the expectation given $C_{1}$,
we have
\begin{eqnarray*}
& & \mathbb{E}_{C_{1}}\left[\int_{0}^{T}\int_{\mathbb{R}^+}\int_{\mathbb{R}^+} h_{1}(y) y^{\alpha} 
\left(\frac{\partial u_{C_{1}}}{\partial x}\left(t,\,x,\,y,\,W_{\cdot}^{0},\,B_{\cdot}^{0},\,
\mathcal{G}\right)\right)^{2}dydxdt\right]
\\
& & \quad \leq\mathbb{E}_{C_{1}}\left[M_{B_{\cdot}^{0},\alpha}\int_{0}^{T}\int_{\mathbb{R}^{+}}
\mathbb{E}\left[h\left(\sigma_{t}^{1}\right)u_{x}^{2}\left(t,x,W_{\cdot}^{0},\mathcal{G},C_{1},h\left(\sigma_{.}\right)\right)\,|
W_{\cdot}^{0},B_{\cdot}^{0},C_{1},\mathcal{G}\right]dxdt\right] \\
& & \quad =\mathbb{E}_{C_{1}}\left[M_{B_{\cdot}^{0},\alpha}\mathbb{E}\left[\int_{0}^{T}h\left(\sigma_{t}^{1}\right)
\int_{\mathbb{R}^{+}}u_{x}^{2}\left(t,x,W_{\cdot}^{0},\mathcal{G},C_{1},h\left(\sigma_{.}\right)\right)dxdt\,|
W_{\cdot}^{0},B_{\cdot}^{0},C_{1},\mathcal{G}\right]\right]
\end{eqnarray*}
by Tonelli's Theorem. Then, by 2. of Theorem~\ref{thm:4.1}, the above quantity
is bounded by a multiple of
\begin{eqnarray*}
\mathbb{E}_{C_{1}}\left[M_{B_{\cdot}^{0},\alpha}\mathbb{E}\left[\int_{\mathbb{R}^{+}}u_{0}^{2}(x)dx\,|\,W_{\cdot}^{0},\,B_{\cdot}^{0},\,C_{1},\,\mathcal{G}\right]\right] 
& =& \mathbb{E}_{C_{1}}\left[M_{B_{\cdot}^{0},\alpha}\int_{\mathbb{R}^{+}}u_{0}^{2}(x)dx\right] \\
& \leq &\mathbb{E}_{C_{1}}\left[M_{B_{\cdot}^{0},\alpha}^{q}\right]\mathbb{E}\left[||u_{0}||_{L^{2}\left(\mathbb{R}^{+}\right)}^{2q'}\right]<\infty
\end{eqnarray*}
by our assumptions. This is the estimate for the $x$-derivative of
$u_{C_{1}}$. To obtain an estimate for the density itself, we follow
the same steps without multiplying by $h_{1}(y)$ and without integrating
in $t$. In that case, when we recall 2. from Theorem~\ref{thm:4.1}, we drop
the integral term of the LHS and our upper bound is again $\mathbb{E}_{C_{1}}\left[M_{B_{\cdot}^{0},\alpha}^{q}\right]\mathbb{E}\left[||u_{0}||_{L^{2}\left(\mathbb{R}^{+}\right)}^{2q'}\right]<\infty$
which is independent of $t\in\left[0,\,T\right]$. Thus we obtain
\begin{eqnarray*}
& & \sup_{0\leq t\leq T}\mathbb{E}_{C_{1}}\left[\int_{\mathbb{R}^+}\int_{\mathbb{R}^+}y^{\alpha}u_{C_{1}}^{2}\left(t,\,x,\,y,\,W_{\cdot}^{0},\,B_{\cdot}^{0},\,\mathcal{G}\right)dydx\right] \\
&& \qquad \leq\mathbb{E}_{C_{1}}\left[M_{B_{\cdot}^{0},\alpha}^{q}\right]\mathbb{E}\left[||u_{0}||_{L^{2}\left(\mathbb{R}^{+}\right)}^{2q'}\right]<\infty
\end{eqnarray*}
Adding our two estimates we can easily deduce that
\[
\left\Vert u_{C_{1}}\right\Vert _{H_{\alpha, \, C_1}\cap L_{\alpha, \, C_1}}^{2}\leq C\mathbb{E}_{C_{1}}\left[M_{B_{\cdot}^{0},\alpha}^{q}\right]\mathbb{E}\left[||u_{0}||_{L^{2}\left(\mathbb{R}^{+}\right)}^{2q'}\right]<\infty
\]
for some $C>0$ (independent of $C_1$) and our proof is complete.
\end{proof}
  
If $C_1$ has a nice distribution such that  $\mathbb{E}\left[\mathbb{E}_{C_{1}}\left[M_{B_{\cdot}^{0},\alpha}^{q}\right]\right] < \infty$, we can take expectations on the last estimate to deduce the existence of a regular density for the limiting empirical measure process, justifying the validity of the approximate computation in \eqref{eq:1.8}. Substituting now $\int_{\mathbb{R}^{2}}f\cdot dv_{t,C_{1}}=\int_{\mathbb{R}^{2}}f(x,y)u_{C_{1}}(t,\,x,\,y)dxdy$
in the distributional SPDE of Theorem~\ref{thm:2.6} and integrating by parts, we obtain the SPDE for the density of $v_{t,C_1}$:
\begin{eqnarray}
u_{C_{1}}(t,\,x,\,y)&=& U_{0}(x, \, y \,|\,G)-r_{1}\int_{0}^{t}\left(u_{C_{1}}(s,\,x,\,y)\right)_{x}ds \nonumber \\
& & \qquad +\frac{1}{2}\int_{0}^{t}h^{2}(y)\left(u_{C_{1}}(s,\,x,\,y)\right)_{x}ds-k_{1}\theta_{1}\int_{0}^{t}
\left(u_{C_{1}}(s,\,x,\,y)\right)_{y}ds \nonumber \\
& & \qquad +k_{1}\int_{0}^{t}\left(yu_{C_{1}}(s,\,x,\,y)\right)_{y}ds+\frac{1}{2}\int_{0}^{t}h^{2}(y)\left(u_{C_{1}}(s,\,x,\,y)\right)_{xx}ds
\nonumber \\
& & \qquad +\xi_{1}\rho_{3}\rho_{1,1}\rho_{2,1}\int_{0}^{t}\left(h\left(y\right)\sqrt{y}u_{C_{1}}(s,\,x,\,y)\right)_{xy}ds \nonumber \\
& & \qquad +\frac{\xi_{1}^{2}}{2}\int_{0}^{t}\left(yu_{C_{1}}(s,\,x,\,y)\right)_{yy}ds-\rho_{1,1}\int_{0}^{t}h(y)\left(u_{C_{1}}(s,\,x,\,y)\right)_{x}dW_{s}^{0}\nonumber \\
& & \qquad -\xi_{1}\rho_{2,1}\int_{0}^{t}\left(\sqrt{y}u_{C_{1}}(s,\,x,\,y)\right)_{y}dB_{s}^{0}, \label{eq:4.11}
\end{eqnarray}
where the derivatives in $y$ and the second derivative in $x$ are considered in the distributional sense (over the test space $C_{0}^{test}$ defined in Theorem~\ref{thm:2.6}), while $U_0(x, \, y \,|\,G)$ stands for the initial density with marginals $u_{0}(x\,|\,G)$ and $p_{0}(y\,|\,\mathcal{G})$.

\section{Using the SPDE to improve the regularity}

In this section we write $\tilde{L}^2_{\delta} = L_{y^{\delta}}^{2}\left(\mathbb{R}^{+}
\times\mathbb{R}^{+}\right)$. Observe that under this notation, the space $L_{\alpha, \, C_1}$ defined in Theorem~\ref{thm:4.2} can be written as
\[
L_{\alpha, \, C_1}=L^{\infty}\left(\left[0,\,T\right];\,L^{2}\left(\left(\Omega, \, \mathcal{F}, \, \mathbb{P}\left(\cdot \, |\,C_1\,\right)\right);\,\tilde{L}^2_{\alpha}\right)\right),
\]
which means that the $L_{\alpha, \, C_1}$ - norm of a function equals the supremum in $t \in \left[0, \, T\right]$ of the $L^{2}\left(\Omega, \, \mathcal{F}, \, \mathbb{P}\left(\cdot \, |\,C_1\,\right)\right)$ norm of its $\tilde{L}^2_{\alpha}$ - norm. All the expectations in this chapter are taken under the conditional probability measure $\mathbb{P}\left(\cdot \, |\,C_1\,\right)$. For any value of the coefficient vector $C_1$, we will write $\Omega$ for $\left(\Omega, \, \mathcal{F}, \, \mathbb{P}\left(\cdot \, |\,C_1\,\right)\right)$ and $\mathbb{P}$ for $\mathbb{P}\left(\cdot \, |\,C_1\,\right)$ for simplicity.  

In this section, we exploit the initial-boundary value
problem satisfied by $u_{C_{1}}$, in order to establish the best possible regularity for our density. First, we need to define the initial-boundary value
problem explicitly. We give the following definition of an $\alpha$-solution to our problem for $\alpha\geq0$, the properties of which
are all satisfied by the density function $u_{C_{1}}$ for all $\alpha\geq 0$
as we have shown in the previous section.
\begin{defn}
For a given value of the coefficient vector $C_{1}$, let $U_{0}\in L^{2}\left(\Omega;\,\tilde{L}_{\alpha}^{2}
\right)$ be a random function which is extended to be zero outside $\mathbb{R}^{+}\times\mathbb{R}^{+}$, $h$ a function having polynomial growth in $\mathbb{R}^{+}$, and $\rho$ a real number.
Given $C_{1}$, $\rho$ and the functions $U_{0}$ and $h$, we say that $u$ is an $\alpha$-solution to our problem when the following are satisfied;

\newpage{}

\begin{enumerate}
\item   $u$ is adapted to the filtration $\{\sigma\left(\mathcal{G},\,W_{t}^{0},\,B_{t}^{0}\right):\,t\geq0\}$
and belongs to the space $H_{\alpha, \, C_1}\cap L_{\alpha, \, C_1}$, where $L_{\alpha, \, C_1}$
and $H_{\alpha, \, C_1}$ are the spaces defined in Theorem~\ref{thm:4.2}.
\item   $u$ vanishes for negative $y$ and satisfies the SPDE
\begin{eqnarray}
u(t,x,y)&=& U_{0}(x,\,y)-r_{1}\int_{0}^{t}\left(u(s,x,y)\right)_{x}ds \nonumber \\
&& \qquad +\frac{1}{2}\int_{0}^{t}h^{2}(y)\left(u(s,x,y)\right)_{x}ds-k_{1}\theta_{1}\int_{0}^{t}\left(u(s,x,y)\right)_{y}ds 
\nonumber \\
&& \qquad +k_{1}\int_{0}^{t}\left(yu(s,\,x,\,y)\right)_{y}ds+\frac{1}{2}\int_{0}^{t}h^{2}(y)\left(u(s,x,y)\right)_{xx}ds
\nonumber \\
& & \qquad +\rho\int_{0}^{t}\left(h\left(y\right)\sqrt{y}u(s,\,x,\,y)\right)_{xy}ds \nonumber \\
& & \qquad +\frac{\xi_{1}^{2}}{2}\int_{0}^{t}\left(yu(s,x,y)\right)_{yy}ds-\rho_{1,1}\int_{0}^{t}h(y)
\left(u(s,x,y)\right)_{x}dW_{s}^{0} \nonumber \\
& & \qquad -\xi_{1}\rho_{2,1}\int_{0}^{t}\left(\sqrt{y}u(s,x,y)\right)_{y}dB_{s}^{0}, \label{eq:5.1}
\end{eqnarray}
for all $x\geq0$ and $y\in\mathbb{R}$, where $u_{y}$, $u_{yy}$
and $u_{xx}$ are considered in the distributional sense over the space of test functions
\[
C_{0}^{test}=\{g\in C_{b}^{2}(\mathbb{R}^{+}\times\mathbb{R}):\,g(0,y)=0, \;\forall y\in\mathbb{R}\}.
\]
\end{enumerate}
\end{defn}

Observe that for $\rho = \xi_{1}\rho_{3}\rho_{1,1}\rho_{2,1}$, where $\rho_3$ is the correlation between $W^0$ and $B^0$ (i.e $dW^0_t \cdot dB^0_t = \rho_3dt$), we obtain the SPDE obtained in the previous section. The main result of this section is given in the following theorem.

\begin{thm}\label{thm:5.2}
Fix the value of the coefficient vector $C_1$, the real number $\rho$ and the initial data function $U_{0}$. Let $u$ be an $\alpha$-solution to our problem, for all $\alpha\geq 0$. Then, the weak derivative $u_{y}$ of $u$ exists and we have
\[
u_{y}\in L^{2}\left(\left[0,\,T\right]\times\Omega;\,\tilde{L}^2_{\alpha}\right)
\]
for all $\alpha\geq 2$.
\end{thm}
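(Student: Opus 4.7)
The natural approach is kernel smoothing followed by an energy estimate, exploiting the SPDE \eqref{eq:5.1} in the spirit outlined in the section preamble. Because the coefficients in $y$ degenerate as $\sqrt{y}$ and $y$, a standard mollifier in $y$ will not interact cleanly with them; instead I would compose a standard mollifier with the substitution $z=\sqrt{y}$. Concretely, for standard mollifiers $\phi_\epsilon,\psi_\epsilon$ set
\[
u^{\epsilon}(t,x,y) \;=\; \int \phi_\epsilon(x-x')\,\psi_\epsilon\!\left(\sqrt{y}-\sqrt{y'}\right) u(t,x',y')\,\frac{dx'\,dy'}{2\sqrt{y'}}.
\]
The known regularity from Theorem~\ref{thm:4.2} ensures $u^\epsilon$ is well-defined, smooth in both variables, inherits $L_{\alpha,C_1}\cap H_{\alpha,C_1}$ bounds uniformly in $\epsilon$, and satisfies a smoothed version of \eqref{eq:5.1} modulo commutator remainders that vanish in the appropriate weighted $L^2$ as $\epsilon\to 0$.

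Next I would apply Ito's formula to $\|u^\epsilon(t,\cdot)\|^2_{\tilde L^2_{\alpha-1}}$ for $\alpha\ge 2$. The decisive computation is this: integrating by parts twice, the drift contribution $\frac{\xi_1^2}{2}(yu^\epsilon)_{yy}$ paired with $y^{\alpha-1}u^\epsilon$ produces $-\frac{\xi_1^2}{2}\int y^{\alpha}(u^\epsilon_y)^2\,dx\,dy$ plus lower-order multiples of $\int y^{\alpha-2}(u^\epsilon)^2$. The quadratic variation of the $B^0$-noise term $\xi_1\rho_{2,1}(\sqrt{y}\,u^\epsilon)_y\,dB^0$ contributes $+\xi_1^2\rho_{2,1}^2\int y^{\alpha}(u^\epsilon_y)^2$ plus similar lower-order terms. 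After cancellation the net coefficient of $\int y^\alpha(u^\epsilon_y)^2$ is $-\frac{\xi_1^2}{2}(1-\rho_{2,1}^2)$, which is \emph{strictly negative} thanks to $\rho_{2,1}\in(-1,1)$; this negative sign is the heart of the estimate. The remaining drift terms are either lower order in weight or involve $u^\epsilon_x$; the $W^0$-noise contributes a quadratic variation proportional to $\int y^{\alpha-1}h(y)^2(u^\epsilon_x)^2$. The cross drift $\rho\int y^{\alpha-1}u^\epsilon(h(y)\sqrt{y}u^\epsilon)_{xy}$ is treated by Young's inequality, split as $\delta\int y^{\alpha}(u^\epsilon_y)^2 + C_\delta \int y^{\alpha}h(y)^2(u^\epsilon_x)^2$; the first piece is absorbed into the dissipation by choosing $\delta$ small, and the second is bounded via the $L^2_{h_1(y)y^{\alpha}}$-integrability of $u_x$ from Theorem~\ref{thm:4.2}.

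Taking expectation kills the martingale, Gronwall's inequality combined with the hypothesis that $u$ lies in $L_{\alpha-2,C_1}$ for the lower-order terms yields a uniform-in-$\epsilon$ bound
\[
\mathbb{E}\int_0^T\!\!\!\int_{(\mathbb{R}^+)^2} y^\alpha (u^\epsilon_y)^2\,dx\,dy\,dt \;\le\; C,
\]
for every $\alpha\ge 2$. Weak compactness in $L^2([0,T]\times\Omega;\tilde L^2_\alpha)$ then produces a subsequential limit, which coincides with the distributional $y$-derivative of $u$ (since $u^\epsilon\to u$ in $L^2$) and therefore identifies $u_y$ as a function in $L^2([0,T]\times\Omega;\tilde L^2_\alpha)$. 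The main obstacle I expect is the careful bookkeeping in the energy step: one must verify that the boundary integrals at $y=0$ vanish (this is precisely what forces $\alpha\ge 2$ rather than $\alpha\ge 1$), that the smoothing kernel composed with the square root genuinely satisfies the convergence and commuting properties required to pass between the SPDE for $u$ and the one for $u^\epsilon$ in the relevant weighted $L^2$ spaces, and finally that the coefficient of the dissipation is exactly $-\frac{\xi_1^2}{2}(1-\rho_{2,1}^2)$ with no sign-cancelling contributions hiding in the commutator or cross terms.
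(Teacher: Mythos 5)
Your overall strategy is the same as the paper's: smooth only through a kernel composed with the square root in the $y$-direction, derive a weighted $L^2$ energy identity from the SPDE, observe that the net coefficient of the $y$-derivative dissipation is strictly negative because $\rho_{2,1}^2<1$ (in the paper it comes out as $-\tfrac{\xi_1^2}{4}(1-\rho_{2,1}^2)$ in the $\sqrt{y}$-coordinate, but the sign is what matters), absorb the mixed term by Young's inequality using the $H_{\alpha,C_1}$ bound on $u_x$ from Theorem~\ref{thm:4.2}, obtain a bound uniform in $\epsilon$, and identify the weak limit with $u_y$. Two points, however, need repair. First, your statement that $u^{\epsilon}$ ``satisfies a smoothed version of \eqref{eq:5.1} modulo commutator remainders that vanish'' is not something you can justify with the available regularity: any commutator between the mollification and the degenerate operators $y\partial_{yy}$, $\sqrt{y}\,\partial_y$ would have to be estimated using $u_y$, which is exactly what does not yet exist. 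The correct mechanism (and the one the paper uses) is that the SPDE is in distributional form, so testing against $\phi_\epsilon(z,y)$ moves all $z$-derivatives onto the kernel; the identities analogous to \eqref{eq:5.8}--\eqref{eq:5.9} then convert them into $y$-derivatives of smoothed quantities whose coefficients sit \emph{inside} the kernel (objects like $I_{\epsilon,z^{\pm 1/2}}$ rather than $\sqrt{y}\,I_{\epsilon,1}$). There are no remainders, but the resulting coefficient mismatch is what the weighted Cauchy--Schwarz/AM--GM bookkeeping of the $\delta$-identity has to absorb; this is more than a routine commutator estimate.

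Second, and more seriously, your claim of a uniform-in-$\epsilon$ bound ``for every $\alpha\ge 2$'' cannot be run directly at the endpoint $\alpha=2$. Your lower-order terms carry the weight $y^{\alpha-2}$; after the substitution $y=v^{2}$ this becomes the weight $v^{2\alpha-5}$ on the smoothed quantity $J_{u,\epsilon}$, which at $\alpha=2$ equals $v^{-1}$ — precisely the borderline case where the smoothing/convergence lemma fails, because $J_{u,\epsilon}$ need not vanish at $v=0$ even though its limit $J_u$ decays linearly, so $\int v^{-1}J_{u,\epsilon}^2\,dv$ can be infinite for every $\epsilon>0$ (this is the content of the paper's remark following the theorem). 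The paper therefore proves the estimate only for weights corresponding to $\alpha>2$ (its $\delta>1$), takes $\epsilon\to 0^{+}$ there, and only afterwards lets $\delta\to 1^{+}$, using dominated convergence and Fatou on the limiting (unsmoothed) quantities to capture $\alpha=2$. Your argument needs this extra two-step limit; as written, the energy inequality at $\alpha=2$ involves terms that may be $+\infty$ uniformly in $\epsilon$, so Gronwall and weak compactness have nothing to bite on at the endpoint.
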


To prove the above Theorem, we need to modify appropriately the kernel smoothing
method which has been developed in \cite{BHHJR, KX99, HL16}. The idea is
to test our SPDE against
\[
\phi_{\epsilon}(z,\,y)=\frac{1}{\sqrt{2\pi\epsilon}}e^{-\frac{(\sqrt{z}-y)^{2}}{2\epsilon}},
\;y,\,z\in\mathbb{R},
\]
in order to obtain a smoothed version of it. Keep in mind that we do not have to integrate over the negative numbers, where the square root is not defined, since by definition our solution vanishes there. From the smoothed version of our SPDE, we can obtain for any $\delta > 1$, an identity involving some finite $\tilde{L}^2_{\delta}$, $\tilde{L}^2_{\delta - 1}$ and $\tilde{L}^2_{\delta - 2}$ norms and inner products of smoothed quantities involving the solution and its derivatives (we shall refer to this as the $\delta$-identity). The finiteness of the $\tilde{L}^2$ norms appearing in the $\delta$-identity for any $\delta$, follows from the good global regularity of our functions when they are smoothed with $\phi_{\epsilon}(z,\,y)$. This good smoothing property of $\phi_{\epsilon}(z,\,y)$, which is not obvious and has to be established concretely, follows from the fact that $\phi_{\epsilon}(z,\,y)$ is just the standard heat kernel (used in the standard kernel smoothing method) composed with a square root function. Then, we can obtain the desired result by manipulating appropriately the $\delta$-identity for each $\delta > 1$ and by taking $\epsilon \rightarrow 0^{+}$, provided that $\phi_{\epsilon}(z,\,y)$ has the same convergence properties as the standard heat kernel. The composition with the square root function leads to the elimination of some bad terms in our $\delta$-identities (terms that could explode as $\epsilon \rightarrow 0^{+}$ under our weak regularity assumptions) which would appear if we used the standard kernel smoothing method. The intuition behind the choice of this composition is that our solution is expected to be the density of a law describing a CIR process in the $y$-direction, while the mapping $z \rightarrow \sqrt{z}$ transforms a CIR process into a process of a constant volatility (like the Brownian motion with drift in the constant volatility model, where the standard kernel smoothing method works).

As we have already mentioned, we need to show that $\phi_{\epsilon}(z,\,y)$ introduced above possesses all the nice smoothing and convergence (as $\epsilon \rightarrow 0^{+}$) properties of the standard heat kernel, and also for many different weighted $L^2$ norms. These natural extensions are given in the following technical lemmas, the proofs of which have been put in the Appendix since they are simple modifications of the proofs of the corresponding properties of the standard heat kernel under the standard $L^2$ norm. 

\begin{lem}\label{lem:5.3}
Suppose that $\left(\Lambda, \, \mu \right)$ is a measure space. For any function $u$ supported in $\Lambda \times \mathbb{R}^{+}$ we define
the functions
\[
J_{u,\epsilon}(\lambda,\,y)=\int_{\mathbb{R}}u(\lambda,\,z)\phi_{\epsilon}(z,\,y)dz
\]
and
\[
J_{u}(\lambda,\,y)=2yu(\lambda,\,y^{2}).
\]
Suppose that for all $\delta' >-1$ we have $J_{u} \in L^{2}\left(\Lambda;\,L_{y^{\delta'}}^{2}\left(\mathbb{R}^{+}
\right)\right)$. Then for all $\delta' >-1$ we have the following regularity and convergence results;
\begin{enumerate}
\item $J_{u,\epsilon}(\cdot,\,\cdot)$ is smooth and for all $n \in \mathbb{N}$ it holds that $\frac{\partial^{n}}{\partial y^{n}}J_{u,\epsilon}(\cdot,\,\cdot) \in L^{2}\left(\Lambda;\,L_{y^{\delta'}}^{2}\left(\mathbb{R}^{+}
\right)\right)$.

\item $J_{u,\epsilon}(\cdot,\,\cdot)\rightarrow J_{u}(\cdot,\,\cdot)$
strongly in $L^{2}\left(\Lambda;\,L_{y^{\delta'}}^{2}\left(\mathbb{R}^{+}
\right)\right)$, as $\epsilon\rightarrow0^{+}$.

\end{enumerate}
\end{lem}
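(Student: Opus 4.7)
The plan is to start from the structural observation that, under the substitution $w=\sqrt{z}$ (so $dz = 2w\,dw$) in the integral defining $J_{u,\epsilon}$, one obtains
\[
J_{u,\epsilon}(\lambda,\,y) \;=\; \int_{0}^{\infty} 2w\,u(\lambda,\,w^{2})\,\frac{1}{\sqrt{2\pi\epsilon}}e^{-(w-y)^{2}/(2\epsilon)}\,dw \;=\; \int_{0}^{\infty} J_{u}(\lambda,\,w)\,K_{\epsilon}(y-w)\,dw,
\]
where $K_{\epsilon}(x) = (2\pi\epsilon)^{-1/2}e^{-x^{2}/(2\epsilon)}$ is the standard heat kernel on $\mathbb{R}$ (using its evenness). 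Extending $J_{u}(\lambda,\,\cdot)$ by zero to the negative reals, $J_{u,\epsilon}(\lambda,\,\cdot)$ is thus exactly the standard heat kernel mollification of $J_{u}(\lambda,\,\cdot)$ restricted to $y\geq 0$. This reduces both conclusions of the lemma to classical facts about mollification, adapted to the weights $y^{\delta'}$ with $\delta' > -1$.

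For part (1), smoothness of $y\mapsto J_{u,\epsilon}(\lambda,\,y)$ follows from differentiation under the integral sign, justified because $\partial_{y}^{n}K_{\epsilon}(y-w)$ is Schwartz-class in $w$ uniformly on compacta in $y$, while $J_{u}(\lambda,\,\cdot)\in L^{2}$ by hypothesis. For the weighted $L^{2}$ bound on $\partial_{y}^{n} J_{u,\epsilon} = J_{u}(\lambda,\,\cdot)*\partial_{y}^{n}K_{\epsilon}$, I would apply a weighted Young-type inequality. Splitting the $y$-integration into the regions $\{y\leq 1\}$ and $\{y>1\}$ and handling the weight $y^{\delta'}$ via an elementary inequality such as $y^{\delta'}\leq C_{\delta'}(w^{\delta'}+|y-w|^{\delta'})$ for $\delta'\geq 0$ (and a symmetric estimate for $-1 < \delta' < 0$, exploiting the local integrability of $y^{\delta'}$ near the origin), one ends up bounding the weighted norm of the convolution by an $\epsilon$-dependent constant (given by finitely many moments of $\partial_{y}^{n}K_{\epsilon}$) times $\|J_{u}(\lambda,\,\cdot)\|_{L^{2}_{y^{\delta'}}}$. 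Taking expectations over $\Lambda$ then yields the claim.

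For part (2), I would first establish a bound $\|J_{u}(\lambda,\,\cdot)*K_{\epsilon}\|_{L^{2}_{y^{\delta'}}}\leq C_{\delta'}\|J_{u}(\lambda,\,\cdot)\|_{L^{2}_{y^{\delta'}}}$ \emph{uniform in $\epsilon$} (the same weighted Young argument with $n=0$ and uniform-in-$\epsilon$ moments of $K_{\epsilon}$). On the dense subclass of $J_{u}(\lambda,\,\cdot)$ smooth and compactly supported in $(0,\,\infty)$, the pointwise convergence $(J_{u}*K_{\epsilon})(y)\to J_{u}(y)$ is uniform and dominated, so the weighted $L^{2}$ convergence is immediate. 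A standard $3\epsilon$-approximation argument, combined with the uniform bound above, then extends the convergence to general $J_{u}\in L^{2}(\Lambda;\,L^{2}_{y^{\delta'}}(\mathbb{R}^{+}))$, and dominated convergence over $\Lambda$ concludes. The main technical obstacle is precisely the weighted Young inequality in the regime $-1<\delta'<0$, where $y^{\delta'}$ blows up at the origin and the convolution smears mass across zero: here the condition $\delta'>-1$ guarantees the local integrability needed for the splitting argument to go through, and the vanishing of $J_{u}$ at $y=0$ (built into the factor $2y$ in its definition) ensures no pathology arises there.
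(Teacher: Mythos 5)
Your proposal is correct, and for part (1) it follows essentially the paper's own argument: both reduce, via Cauchy--Schwarz and Fubini, to the weighted Gaussian moment bound $\int_{0}^{\infty}y^{\delta'}|y-v|^{2n}\frac{e^{-(v-y)^{2}/(2\epsilon)}}{\sqrt{2\pi\epsilon}}\,dy=\mathcal{O}\left(v^{\delta'}+1\right)$, treated separately for $\delta'\geq 0$ (your inequality $y^{\delta'}\leq C_{\delta'}(v^{\delta'}+|y-v|^{\delta'})$ is exactly the paper's) and for $-1<\delta'\leq 0$; for the latter there is no literal ``symmetric'' pointwise inequality, and you should spell out the region-splitting estimate (split at $y=v/2$, use $y^{\delta'}\leq (v/2)^{\delta'}$ on $\{y>v/2\}$, and local integrability of $y^{\delta'}$ together with the Gaussian decay on $\{y\leq v/2\}$), which is what the paper computes and which is precisely where $\delta'>-1$ enters. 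For part (2) your route is genuinely different: you propose a uniform-in-$\epsilon$ weighted Young bound, convergence on a dense class of smooth functions compactly supported in $(0,\infty)$, and a $3\varepsilon$ argument in the Bochner space, whereas the paper proves $\limsup_{\epsilon\rightarrow 0^{+}}\Vert J_{u,\epsilon}\Vert\leq\Vert J_{u}\Vert$ by dominated convergence, gets weak convergence by testing against products $\mathbb{I}_{A}(\lambda)f(y)$, and concludes strong convergence from uniform convexity (the Radon--Riesz argument via Proposition III.30 of Br\'ezis). Your approach is more elementary in that it avoids the Radon--Riesz step, but two technical points should be made explicit: (i) for $\delta'>0$ the natural uniform bound is $\Vert g*K_{\epsilon}\Vert_{L^{2}_{y^{\delta'}}}\leq C\left(\Vert g\Vert_{L^{2}_{y^{\delta'}}}+\Vert g\Vert_{L^{2}}\right)$ (the kernel smears mass toward large $y$), so the approximant must be chosen close in both norms simultaneously --- harmless here, since the hypothesis covers all $\delta'>-1$ and the smooth compactly supported class is dense in the intersection; and (ii) density of simple functions of the form $\sum_{i}\mathbb{I}_{A_{i}}(\lambda)g_{i}(y)$ in $L^{2}\left(\Lambda;\,L^{2}_{y^{\delta'}}(\mathbb{R}^{+})\right)$, which holds by separability of the fibre space. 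The paper's route trades these density and measurability checks for the convexity argument, needing only the $\limsup$ norm inequality plus weak convergence; both are sound.
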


\begin{lem}\label{lem:5.4}
In the notation of lemma~\ref{lem:5.3}, assume that for some $\delta' > 0$, there exists a constant $C>0$ and an $n\in\mathbb{N}$ such that for any $\epsilon>0$ we have
\begin{equation}\label{eq:5.5}
\left\Vert \frac{\partial^{l}}{\partial y^{l}}J_{u,\epsilon}\left(s,\,\cdot\right)\right\Vert _{L^{2}\left(\Lambda;\,L_{y^{\delta'}}^{2}\left(\mathbb{R}^{+}
\right)\right)}^{2} \leq C
\end{equation}
for some function $u$ supported in $\Lambda \times \mathbb{R}^{+}$ and all $l\in\{1,\,2,\,...,\,n\}$. Then we have $\frac{\partial^{l}}{\partial y^{l}}J_{u}\in L^{2}\left(\Lambda;\,L_{y^{\delta'}}^{2}\left(\mathbb{R}^{+}
\right)\right)$ and also $\frac{\partial^{l}}{\partial y^{l}}J_{u,\epsilon}\rightarrow\frac{\partial^{l}}{\partial y^{l}}J_{u}$
strongly in $L^{2}\left(\Lambda;\,L_{y^{\delta'}}^{2}\left(\mathbb{R}^{+}
\right)\right)$
as $\epsilon\rightarrow0^{+}$, for all $l\in\{1,\,2,\,...,\,n\}$.
\end{lem}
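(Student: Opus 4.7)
The plan is threefold: extract a weakly convergent subsequence of $\{\partial^l_y J_{u,\epsilon}\}$ from the uniform bound \eqref{eq:5.5}, identify its limit as the weak derivative $\partial^l_y J_u$ using Lemma~\ref{lem:5.3}, and finally upgrade weak convergence to strong convergence by exploiting the mollifier structure of the map $u \mapsto J_{u,\epsilon}$.

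For the first two steps, the hypothesis \eqref{eq:5.5} makes $\{\partial^l_y J_{u,\epsilon}\}_{\epsilon > 0}$ a bounded sequence in the Hilbert space $L^2(\Lambda; L^2_{y^{\delta'}}(\mathbb{R}^+))$ for each $l \in \{1, \ldots, n\}$. Banach--Alaoglu, together with a diagonal extraction over the finitely many values of $l$, yields a subsequence $\epsilon_k \to 0^+$ and elements $v_l$ in the same space with $\partial^l_y J_{u,\epsilon_k} \rightharpoonup v_l$ weakly. For any test function $\varphi \in C_c^\infty(\Lambda \times (0, \infty))$, $\partial^l_y \varphi$ is bounded and compactly supported in $(0, \infty)$ and hence lies in $L^2_{y^{-\delta'}}$. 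Applying Cauchy--Schwarz together with the strong convergence $J_{u,\epsilon_k} \to J_u$ from Lemma~\ref{lem:5.3}(2) gives
\[
\int v_l\, \varphi\, dy\, d\mu = \lim_{k\to\infty} (-1)^l \int J_{u,\epsilon_k}\, \partial^l_y\varphi\, dy\, d\mu = (-1)^l \int J_u\, \partial^l_y\varphi\, dy\, d\mu,
\]
so $v_l = \partial^l_y J_u$ in the distributional sense. This shows $\partial^l_y J_u \in L^2(\Lambda; L^2_{y^{\delta'}})$, and uniqueness of the weak limit ensures the full net (not just the subsequence) converges weakly.

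To upgrade to strong convergence I would use that, after the substitution $z = w^2$, one has $J_{u,\epsilon}(\lambda, y) = (\tilde{J}_u \star \rho_\epsilon)(\lambda, y)$, where $\tilde{J}_u$ is the extension of $J_u$ by zero to $\mathbb{R}$ and $\rho_\epsilon$ is the standard Gaussian in $y$. Since $J_u(\lambda, 0) = 0$ directly from $J_u(\lambda, y) = 2y u(\lambda, y^2)$, the extension $\tilde{J}_u$ has no delta-type singularity at $y = 0$ in its first distributional derivative on $\mathbb{R}$; combined with the $L^2_{y^{\delta'}}$-regularity of $\partial_y J_u$ established above, this allows the commutation $\partial_y J_{u,\epsilon} = (\partial_y \tilde{J}_u) \star \rho_\epsilon$. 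Strong convergence $\partial_y J_{u,\epsilon} \to \partial_y J_u$ in $L^2(\Lambda; L^2_{y^{\delta'}})$ then follows by repeating the argument of Lemma~\ref{lem:5.3}(2), this time applied with $\partial_y J_u$ in place of the original function. For higher $l$ the same reasoning is iterated.

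The main obstacle will be verifying that no delta-function boundary terms appear at $y = 0$ when extending the higher-order distributional derivatives $\partial^l_y \tilde{J}_u$ to $\mathbb{R}$ for $l \geq 2$: this requires that $\partial^{l-1}_y J_u$ vanishes appropriately as $y \to 0^+$. I would address this by induction on $l$, using at step $l$ the already-established strong $L^2_{y^{\delta'}}$-convergence and regularity of the derivatives of order $j < l$. A cutoff argument near the origin, aided by the suppressing effect of the weight $y^{\delta'}$ with $\delta' > 0$, should close the induction step and complete the proof.
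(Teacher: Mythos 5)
Your first two steps (the uniform bound giving weak subsequential limits $v_l$, and the identification of $v_l$ with the weak derivative $\partial^l_y J_u$ by testing against functions supported in $\Lambda\times(0,\infty)$ together with Lemma~\ref{lem:5.3}) coincide with the paper's argument and are fine. The gap is in the strong-convergence step, and it sits exactly at what you yourself label the ``main obstacle'': the commutation identity $\partial^l_y J_{u,\epsilon}(\lambda,y)=\int_0^\infty \partial^l_y J_u(\lambda,v)\,\frac{1}{\sqrt{2\pi\epsilon}}e^{-(v-y)^2/(2\epsilon)}\,dv$, i.e.\ the absence of boundary contributions at $v=0$ when the $y$-derivatives are moved onto $J_u$ inside the half-line convolution. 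Your plan --- induction on $l$, resting on $\partial^{l-1}_y J_u$ vanishing as $y\to 0^+$, to be closed by an unspecified cutoff argument --- is not carried out, and its key premise does not follow from the hypotheses: the lemma only provides control in $L^2_{y^{\delta'}}$ with $\delta'>0$, a weight that degenerates at the origin, so $\partial^{l-1}_y J_u$ need not even be locally integrable up to $y=0$ and need not possess a trace there; likewise ``$J_u(\lambda,0)=0$ directly'' is only formal, since $u$ is an $L^2$-class function and pointwise evaluation is not available. As written, the load-bearing step therefore does not close.

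For comparison, the paper does not argue via traces or an induction on $l$. After the weak-limit and identification step (identical to yours) it treats all $l$ at once: it rewrites $\partial^l_y J_{u,\epsilon}$ as the half-line Gaussian mollification of the weak derivative $J_u^l$ --- moving all $l$ derivatives from the kernel onto $J_u$ in a single stroke, invoking the weak-derivative property against the kernel rather than first-order commutations --- then splits the Gaussian, applies Cauchy--Schwarz and dominated convergence to obtain $\limsup_{\epsilon\to0^+}\bigl\Vert \partial^l_y J_{u,\epsilon}\bigr\Vert \le \bigl\Vert J_u^l\bigr\Vert$ in $L^{2}\left(\Lambda;\,L_{y^{\delta'}}^{2}\left(\mathbb{R}^{+}\right)\right)$, and finally upgrades the already-established weak convergence to strong convergence by uniform convexity (Proposition III.30 of \cite{Brezis}), exactly as in the proof of Lemma~\ref{lem:5.3}. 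Your proposal to ``repeat the argument of Lemma~\ref{lem:5.3}(2) with $\partial_y J_u$ in place of the original function'' is the same mechanism, so your outline would close once the commutation identity is justified; but that justification is precisely what is missing, and the route you sketch for it (vanishing traces of the lower-order derivatives at the origin) is neither proved nor clearly available under the stated assumptions.
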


We will use these two lemmas for $\Lambda = \Omega \times \mathbb{R}^{+}$ and $\Lambda = \left[0, \, t \right] \times \Omega \times \mathbb{R}^{+}$ for $t \geq 0$, with the corresponding product of measures (where $\Omega$ is equipped with the measure $\mathbb{P}\left(\cdot \, |\,C_1\,\right)$ and both $\left[0, \, t \right]$ and $\mathbb{R}^{+}$ are equipped with the standard Lebesgue measure). This means that, in the notation we introduced at the beginning of this section, the two lemmas will be used for functions in the spaces $L^{2}\left(\Omega \times \mathbb{R}^{+};\,L_{y^{\delta'}}^{2}\left(\mathbb{R}^{+}
\right)\right) = L^{2}\left(\Omega;\,\tilde{L}_{\delta'}^{2}\right)$ and $L^{2}\left(\left[0, \, t \right] \times \Omega \times \mathbb{R}^{+};\,L_{y^{\delta'}}^{2}\left(\mathbb{R}^{+}
\right)\right) = L^{2}\left(\left[0, \, t \right] \times \Omega;\,\tilde{L}_{\delta'}^{2}\right)$. 

We fix now a function $u$ which is an $\alpha$-solution to our problem for all $\alpha \geq 0$ and we set:
\[
I_{\epsilon,g(z)}(s,\,x,\,y)=\int_{\mathbb{R}}g(z)u(s,\,x,\,z)\phi_{\epsilon}(z,\,y)dz
\]
for any function $g$ of $z$ and $\epsilon > 0$. Under this notation, the $\delta$-identity (for any $\delta > 1$) for our solution $u$ is given in the following lemma:

\begin{lem}[\textbf{the $\delta$-identity}]

The following estimate holds for any $\delta > 1$,
\begin{eqnarray}
\left\Vert I_{\epsilon,1}(t,\,\cdot)\right\Vert _{L^{2}\left(\Omega;\,\tilde{L}_{\delta}^2\right)}^{2} &=& \left\Vert \int_{\mathbb{R}}U_{0}(\cdot,\,z)\phi_{\epsilon}(z,\,\cdot)dz\right\Vert _{L^{2}\left(\Omega;\,\tilde{L}_{\delta}^2\right)}^{2} \nonumber \\
& & \qquad +\int_{0}^{t}\left\langle \frac{\partial}{\partial x}I_{\epsilon,h^{2}(z)}(s,\,\cdot),\,I_{\epsilon,1}(s,\,\cdot)\right\rangle _{L^{2}\left(\Omega;\,\tilde{L}_{\delta}^2\right)}ds \nonumber \\
& & \qquad +\delta\left(k_{1}\theta_{1}-\frac{\xi_{1}^{2}}{4}\right)\int_{0}^{t}\left\langle I_{\epsilon,z^{-\frac{1}{2}}}(s,\,\cdot),\,I_{\epsilon,1}(s,\,\cdot)\right\rangle _{L^{2}\left(\Omega;\,\tilde{L}^2_{\delta-1}\right)}ds \nonumber \\
& & \qquad +\left(k_{1}\theta_{1}-\frac{\xi_{1}^{2}}{4}\right)\int_{0}^{t}\left\langle I_{\epsilon,z^{-\frac{1}{2}}}(s,\,\cdot),\,\frac{\partial}{\partial y}I_{\epsilon,1}(s,\,\cdot)\right\rangle _{L^{2}\left(\Omega;\,\tilde{L}_{\delta}^2\right)}ds \nonumber \\
& & \qquad -\delta k_{1}\int_{0}^{t}\left\langle I_{\epsilon,z^{\frac{1}{2}}}(s,\,\cdot),\,I_{\epsilon,1}(s,\,\cdot)\right\rangle _{L^{2}\left(\Omega;\,\tilde{L}^2_{\delta-1}\right)}ds \nonumber \\
& & \qquad -k_{1}\int_{0}^{t}\left\langle I_{\epsilon,z^{\frac{1}{2}}}(s,\,\cdot),\,\frac{\partial}{\partial y}I_{\epsilon,1}(s,\,\cdot)\right\rangle _{L^{2}\left(\Omega;\,\tilde{L}_{\delta}^2\right)}ds \nonumber \\
& & \qquad -\int_{0}^{t}\left\langle \frac{\partial}{\partial x}I_{\epsilon,h^{2}(z)}(s,\,\cdot),\,\frac{\partial}{\partial x}I_{\epsilon,1}(s,\,\cdot)\right\rangle _{L^{2}\left(\Omega;\,\tilde{L}_{\delta}^2\right)}ds \nonumber \\
& & \qquad -\delta\rho\int_{0}^{t}\left\langle \frac{\partial}{\partial x}I_{\epsilon,h\left(z\right)}(s,\cdot),\,I_{\epsilon,1}(s,\cdot)\right\rangle _{L^2\left(\Omega;\, \tilde{L}_{\delta-1}^2 \right)}ds \nonumber \\
& & \qquad +\rho_{1,1}^{2}\int_{0}^{t}\left\Vert \frac{\partial}{\partial x}I_{\epsilon,h(z)}(s,\,\cdot)\right\Vert _{L^{2}\left(\Omega;\,\tilde{L}_{\delta}^2\right)}^{2}ds
\nonumber \\
& & \qquad +\delta(\delta-1)\frac{\xi_{1}^{2}}{8}\int_{0}^{t}\left\Vert I_{\epsilon,1}(s,\,\cdot)\right\Vert_{L^2\left(\Omega;\,\tilde{L}^2_{\delta-2}\right)}^{2}ds
\nonumber \\
& & \qquad -\frac{\xi_{1}^{2}}{4}\left(1-\rho_{2,1}^{2}\right)\int_{0}^{t}\left\Vert \frac{\partial}{\partial y}I_{\epsilon,1}(s,\,\cdot)\right\Vert _{L^{2}\left(\Omega;\,\tilde{L}_{\delta}^2\right)}^{2}ds. \nonumber \\
& & \qquad - \left(\rho - \xi_{1}\rho_{3}\rho_{1,1}\rho_{2,1}\right)\int_{0}^{t}\left\langle \frac{\partial}{\partial x}I_{\epsilon,h\left(z\right)}(s,\cdot),\,\frac{\partial}{\partial y}I_{\epsilon,1}(s,\cdot)\right\rangle _{L^2\left(\Omega; \, \tilde{L}_{\delta}^2\right)}ds. \nonumber \\
\label{eq:5.17}
\end{eqnarray} 

All the terms in the above identity are finite. 

\end{lem}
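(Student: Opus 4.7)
The plan is to smooth the SPDE \eqref{eq:5.1} in the $y$-variable against $\phi_\epsilon(z,y)$, apply Ito's formula in Krylov--Rozovskii form to $\|I_{\epsilon,1}(t,\cdot)\|^2_{\tilde L^2_\delta}$, take expectations under $\mathbb{P}(\cdot|C_1)$ to kill the martingale contributions, and match the resulting terms to the lines of \eqref{eq:5.17}. The key structural identity enabling the smoothing is
\[
\partial_z\phi_\epsilon(z,y) \;=\; -\frac{1}{2\sqrt z}\,\partial_y\phi_\epsilon(z,y), \qquad z>0,
\]
which converts each distributional $z$-derivative on $u$ in \eqref{eq:5.1} into a classical $y$-derivative on a weighted smoothing $I_{\epsilon,g(z)}$, making the smoothed equation genuinely classical in $y$.

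Applying this identity (iterated for the $(zu)_{zz}$ term via $\partial_{zz}\phi_\epsilon = \frac{1}{4z^{3/2}}\partial_y\phi_\epsilon + \frac{1}{4z}\partial_{yy}\phi_\epsilon$) and integrating by parts in $z$, I obtain for $I_{\epsilon,1}$ the SPDE
\begin{align*}
dI_{\epsilon,1} = \Bigl[&-r_1\partial_x I_{\epsilon,1} + \tfrac12\partial_x I_{\epsilon,h^2(z)} + \tfrac12\partial_{xx}I_{\epsilon,h^2(z)} + \bigl(\tfrac{\xi_1^2}{8}-\tfrac{k_1\theta_1}{2}\bigr)\partial_y I_{\epsilon,z^{-1/2}} \\
&+\tfrac{k_1}{2}\partial_y I_{\epsilon,z^{1/2}} + \tfrac{\rho}{2}\partial_x\partial_y I_{\epsilon,h(z)} + \tfrac{\xi_1^2}{8}\partial_{yy}I_{\epsilon,1}\Bigr]dt \\
&-\rho_{1,1}\partial_x I_{\epsilon,h(z)}\,dW^0_t - \tfrac{\xi_1\rho_{2,1}}{2}\partial_y I_{\epsilon,1}\,dB^0_t.
\end{align*}
Ito's formula for $\|I_{\epsilon,1}(t)\|^2_{\tilde L^2_\delta}$ in $L^2(\Omega)$ then yields $2\langle I_{\epsilon,1},\mathrm{drift}\rangle_{L^2(\Omega;\tilde L^2_\delta)}\,ds$ plus a quadratic-variation contribution using $dW^0_t\,dB^0_t=\rho_3\,dt$. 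The $-r_1\partial_x I_{\epsilon,1}$ piece contributes zero because $\int y^\delta\int\partial_x(I_{\epsilon,1}^2)\,dx\,dy=0$ by the Dirichlet condition $u|_{x=0}=0$; one integration by parts in $y$ with weight $y^\delta$ (the boundary term at $y=0$ vanishing for $\delta>0$ and decay at infinity) splits each $\partial_y$ drift term into an inner product in $\tilde L^2_{\delta-1}$ and one in $\tilde L^2_\delta$, producing lines 3--6 of \eqref{eq:5.17} along with the $-\delta\rho$ piece of line 8 and the $-\rho$ summand of the last line; integration by parts in $x$ on $\partial_{xx}I_{\epsilon,h^2(z)}$ gives line 7; and a double integration by parts on $\partial_{yy}I_{\epsilon,1}$ delivers both the $\tfrac{\xi_1^2\delta(\delta-1)}{8}\|I_{\epsilon,1}\|^2_{\tilde L^2_{\delta-2}}$ summand (where $\delta>1$ is needed so that $[y^{\delta-1}I_{\epsilon,1}^2]_{y=0}$ vanishes) and a $-\tfrac{\xi_1^2}{4}\|\partial_y I_{\epsilon,1}\|^2_{\tilde L^2_\delta}$ summand. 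Combining this last with $\tfrac{\xi_1^2\rho_{2,1}^2}{4}\|\partial_y I_{\epsilon,1}\|^2_{\tilde L^2_\delta}$ from the quadratic variation produces the $-\tfrac{\xi_1^2}{4}(1-\rho_{2,1}^2)$ coefficient of \eqref{eq:5.17}, while combining the $-\rho\langle\partial_x I_{\epsilon,h(z)},\partial_y I_{\epsilon,1}\rangle$ piece from the $\partial_x\partial_y$ integration by parts with the cross-variation term $\xi_1\rho_3\rho_{1,1}\rho_{2,1}\langle\partial_x I_{\epsilon,h(z)},\partial_y I_{\epsilon,1}\rangle$ yields the factor $-(\rho-\xi_1\rho_3\rho_{1,1}\rho_{2,1})$ of the last line.

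Finiteness of every term in \eqref{eq:5.17} is supplied by Lemmas~\ref{lem:5.3} and \ref{lem:5.4}: since $u$ is an $\alpha$-solution for all $\alpha\ge 0$, the transforms $J_{g(z)u}$ for $g(z)\in\{1,h(z),h^2(z),z^{\pm 1/2}\}$ and also $J_{u_x}$ all lie in $L^2(\Omega\times\mathbb{R}^+;L^2_{y^{\delta'}}(\mathbb{R}^+))$ for every $\delta'>-1$ (using the polynomial growth of $h$ and the $H^1_0$-regularity in $x$), so Lemma~\ref{lem:5.3} provides $L^2$-bounded smoothings in the relevant weighted spaces and Lemma~\ref{lem:5.4} provides $L^2$-bounded $y$-derivatives. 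The principal obstacle is the rigorous justification of the $z$-integrations by parts, because $\phi_\epsilon(\cdot,y)$ fails to be $C^1$ at $z=0$; this is resolved by the change of variables $z=w^2$, under which $I_{\epsilon,1}(t,x,y)=\int_0^\infty J_u(t,x,w)\tilde\phi_\epsilon(w,y)\,dw$ for the standard Gaussian kernel $\tilde\phi_\epsilon(w,y)=(2\pi\epsilon)^{-1/2}e^{-(w-y)^2/(2\epsilon)}$, so every $z$-IBP reduces to a standard $w$-IBP for $J_u$ against $\tilde\phi_\epsilon$. The remaining delicate step, the application of the variational Ito formula in $L^2(\Omega;\tilde L^2_\delta)$, is handled by the Krylov--Rozovskii framework of \cite{KR81} in the Gelfand triple built from the $H^1_0(\mathbb{R}^+)$-regularity in $x$ inherited from the $\alpha$-solution definition and the classical $y$-smoothness of $I_{\epsilon,1}$ provided by Lemma~\ref{lem:5.3}.
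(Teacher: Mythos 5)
Your proposal is correct and follows essentially the same route as the paper: test the SPDE against $\phi_{\epsilon}$ using $\partial_z\phi_\epsilon=-\tfrac{1}{2\sqrt z}\partial_y\phi_\epsilon$ (and its iterate for the second-order term) to get exactly the smoothed equation \eqref{eq:5.10}, apply the Krylov--Rozovskii Ito formula for the squared weighted norm, take expectations, kill the $x$-transport term via the Dirichlet condition, integrate by parts in $x$ and in $y$ against $y^{\delta}$ (with $\delta>1$ precisely to remove the boundary term at $y=0$ in the double integration by parts), and merge the quadratic- and cross-variation terms with the integration-by-parts terms to produce the $-\tfrac{\xi_1^2}{4}(1-\rho_{2,1}^2)$ and $-(\rho-\xi_1\rho_3\rho_{1,1}\rho_{2,1})$ coefficients, with finiteness supplied by Lemma~\ref{lem:5.3}. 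The only differences are cosmetic (you apply Ito directly in $\tilde L^2_\delta$ rather than in $L^2(\mathbb{R}^+)$ followed by weighting in $y$, and you add an explicit $z=w^2$ justification of the $z$-integrations by parts), so this is the paper's argument.
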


\begin{proof}
Notice first that the finiteness of each term in the identity we need to prove is a consequence of 1. of Lemma~\ref{lem:5.3}. Next, we observe that by definition of $\phi_{\epsilon}$ we have
\begin{eqnarray}
\int_{\mathbb{R}}g(z)u(s,\,x,\,z)\frac{\partial}{\partial z}\phi_{\epsilon}(z,\,y)dz 
&=& -\frac{1}{2}\int_{\mathbb{R}}z^{-\frac{1}{2}}g(z)u(s,\,x,\,z)\frac{\partial}{\partial y}
\phi_{\epsilon}(z,\,y)dz \nonumber \\
&=& -\frac{1}{2}\frac{\partial}{\partial y} I_{\epsilon,g(z)z^{-\frac{1}{2}}}(s,\,x,\,y) \label{eq:5.8}
\end{eqnarray}
and also
\begin{eqnarray}
& &  \int_{\mathbb{R}}g(z)u(s,\,x,\,z)\frac{\partial^{2}}{\partial z^{2}}\phi_{\epsilon}(z,\,y)dz \nonumber \\
& & \quad = -\int_{\mathbb{R}}g(z)u(s,\,x,\,z)\frac{\partial}{\partial z}\left(\frac{\partial}{\partial y}\phi_{\epsilon}(z,\,y)\frac{1}{2\sqrt{z}}\right)dz \nonumber \\
&& \quad = -\int_{\mathbb{R}}g(z)u(s,\,x,\,z)\frac{\partial}{\partial y}\frac{\partial}{\partial z}\phi_{\epsilon}(z,\,y)\frac{1}{2\sqrt{z}}dz +\int_{\mathbb{R}}g(z)u(s,\,x,\,z)\frac{\partial}{\partial y}\phi_{\epsilon}(z,\,y)\frac{1}{4z\sqrt{z}}dz \nonumber \\
& & \quad = -\left(\frac{1}{2}\int_{\mathbb{R}}g(z)z^{-\frac{1}{2}}u(s,\,x,\,z)\frac{\partial}{\partial z}\phi_{\epsilon}(z,\,y)
dz\right)_{y} + \frac{1}{4}\int_{\mathbb{R}}g(z)z^{-\frac{3}{2}}u(s,\,x,\,z)\frac{\partial}{\partial y}\left(\phi_{\epsilon}(z,\,y)\right)dz \nonumber \\
& & \quad =\frac{1}{4}\left(I_{\epsilon,g(z)z^{-1}}(s,\,x,\,y)\right)_{yy}+\frac{1}{4}\left(I_{\epsilon,g(z)z^{-\frac{3}{2}}}(s,\,x,\,y)\right)_{y},
\label{eq:5.9}
\end{eqnarray}
for any $\epsilon>0$ and $\alpha\in\mathbb{R}$. Thus, after testing
\eqref{eq:5.1} against $\phi_{\epsilon}$, by substituting from \eqref{eq:5.8}
and \eqref{eq:5.9}, and by interchanging the $x$-derivatives with the integrals, we obtain
\begin{eqnarray}
I_{\epsilon,1}(t,\,x,\,y)&=& \int_{\mathbb{R}^{+}}U_{0}(x,\,z)\phi_{\epsilon}(z,\,y)dz-r_{1}\int_{0}^{t}\frac{\partial}{\partial x}I_{\epsilon,1}(s,\,x,\,y)ds \nonumber \\
& & \quad +\frac{1}{2}\int_{0}^{t}\frac{\partial}{\partial x}I_{\epsilon,h^{2}(z)}(s,\,x,\,y)ds -\frac{k_{1}\theta_{1}}{2}\int_{0}^{t}\frac{\partial}{\partial y}I_{\epsilon,z^{-\frac{1}{2}}}(s,\,x,\,y)ds \nonumber \\
& & \quad +\frac{k_{1}}{2}\int_{0}^{t}\frac{\partial}{\partial y}I_{\epsilon,z^{\frac{1}{2}}}(s,\,x,\,y)ds +\frac{1}{2}\int_{0}^{t}\frac{\partial^{2}}{\partial x^{2}}I_{\epsilon,h^{2}(z)}(s,\,x,\,y)ds \nonumber \\
& & \quad +\frac{\xi_{1}^{2}}{8}\int_{0}^{t}\frac{\partial^{2}}{\partial y^{2}}I_{\epsilon,1}(s,\,x,\,y)ds +\frac{\xi_{1}^{2}}{8}\int_{0}^{t}\frac{\partial}{\partial y}I_{\epsilon,z^{-\frac{1}{2}}}(s,\,x,\,y)ds \nonumber \\
& & \quad +\frac{\rho}{2}\int_{0}^{t}\frac{\partial^{2}}{\partial x \partial y}I_{\epsilon,h\left(z\right)}(s,\,x,\,y)ds -\rho_{1,1}\int_{0}^{t}\frac{\partial}{\partial x}I_{\epsilon,h(z)}(s,\,x,\,y)dW_{s}^{0} \nonumber \\
& & \quad -\frac{\xi_{1}}{2}\rho_{2,1}\int_{0}^{t}\frac{\partial}{\partial y}I_{\epsilon,1}(s,\,x,\,y)dB_{s}^{0}. \label{eq:5.10}
\end{eqnarray}
By applying Ito's formula for the $L^{2}(\mathbb{R}^{+})$ norm on
\eqref{eq:5.10} (Theorem~3.1 from \cite{KR81} for the triple $H_{0}^{1}\subset L^{2}\subset H^{-1}$),
multiplying by $y^{\delta}$ for $\delta > 1$ and integrating in
$y$ over $\mathbb{R}^{+}$, we obtain the equality
\begin{eqnarray}
& & \left\Vert I_{\epsilon,1}(t,\,\cdot)\right\Vert _{\tilde{L}_{\delta}^2}^{2}=\left\Vert \int_{\mathbb{R}}U_{0}(\cdot,z)
\phi_{\epsilon}(z,\cdot)dz\right\Vert _{\tilde{L}_{\delta}^2}^{2} \nonumber \\
& & \qquad -2r_{1}\int_{0}^{t}\left\langle \frac{\partial}{\partial x}I_{\epsilon,1}(s,\cdot),I_{\epsilon,1}(s,\cdot)\right\rangle _{\tilde{L}_{\delta}^2}ds+\int_{0}^{t}\left\langle \frac{\partial}{\partial x}I_{\epsilon,h^{2}(z)}(s,\cdot),I_{\epsilon,1}(s,\cdot)\right\rangle _{\tilde{L}_{\delta}^2}ds \nonumber \\
& & \qquad -k_{1}\theta_{1}\int_{0}^{t}\left\langle \frac{\partial}{\partial y}I_{\epsilon,z^{-\frac{1}{2}}}(s,\cdot),I_{\epsilon,1}(s,\cdot)\right\rangle _{\tilde{L}_{\delta}^2}ds+k_{1}\int_{0}^{t}\left\langle \frac{\partial}{\partial y}I_{\epsilon,z^{\frac{1}{2}}}(s,\cdot),I_{\epsilon,1}(s,\cdot)\right\rangle _{\tilde{L}_{\delta}^2}ds \nonumber \\
& & \qquad +\int_{0}^{t}\left\langle \frac{\partial^{2}}{\partial x^{2}}I_{\epsilon,h^{2}(z)}(s,\cdot),\,I_{\epsilon,1}(s,\cdot)\right\rangle _{\tilde{L}_{\delta}^2}ds+\frac{\xi_{1}^{2}}{4}\int_{0}^{t}\left\langle \frac{\partial^{2}}{\partial y^{2}}I_{\epsilon,1}(s,\cdot),
I_{\epsilon,1}(s,\cdot)\right\rangle _{\tilde{L}_{\delta}^2}ds \nonumber \\
& & \qquad +\rho\int_{0}^{t}\left\langle \frac{\partial^{2}}{\partial x \partial y}I_{\epsilon,h\left(z\right)}(s,\cdot),\,I_{\epsilon,1}(s,\cdot)\right\rangle _{\tilde{L}_{\delta}^2}ds \nonumber \\
& & \qquad +\xi_{1}\rho_{3}\rho_{1,1}\rho_{2,1}\int_{0}^{t}\left\langle \frac{\partial}{\partial x}I_{\epsilon,h\left(z\right)}(s,\cdot),\,\frac{\partial}{\partial y}I_{\epsilon,1}(s,\cdot)\right\rangle _{\tilde{L}_{\delta}^2}ds \nonumber \\
& & \qquad +\frac{\xi_{1}^{2}}{4}\int_{0}^{t}\left\langle \frac{\partial}{\partial y}I_{\epsilon,z^{-\frac{1}{2}}}(s,\cdot),I_{\epsilon,1}(s,\,\cdot)\right\rangle _{\tilde{L}_{\delta}^2}ds+\rho_{1,1}^{2}\int_{0}^{t}\left\Vert \frac{\partial}{\partial x}
I_{\epsilon,h(z)}(s,\cdot)\right\Vert _{\tilde{L}_{\delta}^2}^{2}ds \nonumber \\
& & \qquad +\frac{\xi_{1}^{2}}{4}\rho_{2,1}^{2}\int_{0}^{t}\left\Vert \frac{\partial}{\partial y}
I_{\epsilon,1}(s,\cdot)\right\Vert _{\tilde{L}_{\delta}^2}^{2}ds
\nonumber \\
& & \qquad -2\rho_{1,1}\int_{0}^{t}\left\langle \frac{\partial}{\partial x}I_{\epsilon,h(z)}(s,\cdot),I_{\epsilon,1}(s,\cdot)\right\rangle _{\tilde{L}_{\delta}^2}dW_{s}^{0} \nonumber \\
& &\qquad -\xi_{1}\rho_{2,1}\int_{0}^{t}\left\langle \frac{\partial}{\partial y}I_{\epsilon,1}(s,\cdot),I_{\epsilon,1}(s,\cdot)\right\rangle _{\tilde{L}_{\delta}^2}dB_{s}^{0}. \label{eq:5.11}
\end{eqnarray}
Observe now that by the definition of $u_{xx}$ in our SPDE, we have
\begin{eqnarray}
& & \int_{\mathbb{R}^+}\int_{\mathbb{R}}u_{xx}(s,\,x,\,z)\phi_{\epsilon}(z,\,y)f(x)dzdx=\int_{\mathbb{R}^+}\int_{\mathbb{R}}u(s,\,x,\,z)\phi_{\epsilon}(z,\,y)f_{xx}(x)dzdx \nonumber \\
& & \qquad =-\int_{\mathbb{R}^+}\int_{\mathbb{R}}u_{x}(s,\,x,\,z)\phi_{\epsilon}(z,\,y)f_{x}(x)dzdx \label{eq:5.12}
\end{eqnarray}
for any smooth $f$ vanishing at zero. Since $u\in H_{0}^{1}$, this
mapping over all such functions $f$ defines a distribution in $H^{-1}$,
and since those test functions are dense in $H_{0}^{1}$, we have
that \eqref{eq:5.12} holds for any $f\in H_{0}^{1}$. In particular, for
$f=I_{\epsilon,1}(s,\,\cdot,\,y)$, multiplying \eqref{eq:5.12} by $y^{\delta}$ and then integrating in $y$ and $t$ over $\mathbb{R}^{+}$, we obtain
\begin{equation}
\int_{0}^{t}\left\langle \frac{\partial^{2}}{\partial x^{2}}I_{\epsilon,h^{2}(z)}(s,\,\cdot),\,I_{\epsilon,1}(s,\,\cdot)\right\rangle _{\tilde{L}_{\delta}^2}ds
=-\int_{0}^{t}\left\langle \frac{\partial}{\partial x}I_{\epsilon,h^{2}(z)}(s,\,\cdot),\,\frac{\partial}{\partial x}I_{\epsilon,1}(s,\,\cdot)\right\rangle _{\tilde{L}_{\delta}^2}ds. \\
\label{eq:5.13}
\end{equation}
Next, integration by parts implies
\begin{eqnarray}
& & \int_{0}^{t}\left\langle \frac{\partial^{2}}{\partial y^{2}}I_{\epsilon,1}(s,\,\cdot),\,I_{\epsilon,1}(s,\,\cdot)\right\rangle _{\tilde{L}_{\delta}^2}ds
\nonumber \\
& & \qquad =\delta(\delta-1)\frac{1}{2}\int_{0}^{t}\left\Vert I_{\epsilon,1}(s,\,\cdot)\right\Vert _{\tilde{L}^2_{\delta-2}}^{2}
ds-\int_{0}^{t}\left\Vert \frac{\partial}{\partial y}I_{\epsilon,1}(s,\,\cdot)\right\Vert _{\tilde{L}_{\delta}^2}^{2}ds \nonumber \\
\label{eq:5.14}
\end{eqnarray}
and
\begin{eqnarray}
& & \int_{0}^{t}\left\langle \frac{\partial^{2}}{\partial x \partial y}I_{\epsilon,h\left(z\right)}(s,\cdot),\,I_{\epsilon,1}(s,\cdot)\right\rangle _{\tilde{L}_{\delta}^2}ds  \nonumber \\
& & \;\; = - \int_{0}^{t}\left\langle \frac{\partial}{\partial x}I_{\epsilon,h\left(z\right)}(s,\cdot),\,\frac{\partial}{\partial y}I_{\epsilon,1}(s,\cdot)\right\rangle _{\tilde{L}_{\delta}^2}ds  -\delta\int_{0}^{t}\left\langle \frac{\partial}{\partial x}I_{\epsilon,h\left(z\right)}(s,\cdot),\,I_{\epsilon,1}(s,\cdot)\right\rangle _{\tilde{L}_{\delta-1}^2}ds \nonumber \\
\label{eq:5.15}
\end{eqnarray}
and also
\begin{eqnarray}
& & \int_{0}^{t}\left\langle \frac{\partial}{\partial y}I_{\epsilon,z^{\alpha}}(s,\,\cdot),\,I_{\epsilon,1}(s,\,\cdot)
\right\rangle _{\tilde{L}_{\delta}^2}ds \nonumber \\
& & \qquad = -\delta\int_{0}^{t}\left\langle I_{\epsilon,z^{\alpha}}(s,\,\cdot),\,I_{\epsilon,1}(s,\,\cdot)\right\rangle _{\tilde{L}^2_{\delta-1}}ds -\int_{0}^{t}\left\langle I_{\epsilon,z^{\alpha}}(s,\,\cdot),\,\frac{\partial}{\partial y}I_{\epsilon,1}(s,\,\cdot)
\right\rangle_{\tilde{L}_{\delta}^2}ds, \nonumber \\
\label{eq:5.16}
\end{eqnarray}
for any $\alpha$ for which the above quantities are regular enough. Note that integrating by parts in the $y$-direction is possible without leaving any boundary term at infinity, since all the terms inside the inner products are rapidly decreasing in $y$. This is also a consequence of 1. of Lemma~\ref{lem:5.3}, since for any $n \in \mathbb{N}$ and any function $f$ having derivatives in polynomially weighted $L^2$ spaces, by Morrey's inequality we have:
\begin{eqnarray*}
&& y^nf(y) \leq \frac{1}{y}\sup_{z \in \mathbb{R}}{|z^{n+1}f(z)|} \nonumber \\
& &\qquad \qquad \leq \frac{1}{y}\left(\int_{\mathbb{R}}z^{2(n+1)}f^2(z)dz+\int_{\mathbb{R}}z^{2n}f^2(z)dz+\int_{\mathbb{R}}z^{2(n+1)}(f'(z))^2dz\right)^\frac{1}{2} \rightarrow 0
\end{eqnarray*} as $y \rightarrow \infty$. Of course, we do not have boundary terms at zero either, due to the weight function $y^{\delta}$.

We will use \eqref{eq:5.13} - \eqref{eq:5.15} to get rid of second order derivative terms in our estimate. Here, it becomes clear why we have chosen to compose the standard heat kernel with $\sqrt{z}$: In \eqref{eq:5.11}, substituting the second term in the fourth row from \eqref{eq:5.14} gives again the term of the eighth row but with a negative coefficient of a bigger absolute value, which allows us to control $y$-derivative terms. It is not hard to check that that this wouldn't have been the case if we had composed the standard heat kernel with another function, when the existence of $u_{y}$ is not assumed (as in our case). By observing now that the first inner product
of the RHS of \eqref{eq:5.11} is zero, substituting also \eqref{eq:5.13}, \eqref{eq:5.15}
and \eqref{eq:5.16} for $\alpha=\pm\frac{1}{2}$ in \eqref{eq:5.11} and
taking expectations, we obtain the desired.
\end{proof}

Now that we have obtained the $\delta$-identity for all $\delta > 1$, we can proceed to the proof of our main Theorem. Our strategy is 
to establish the regularity result by controlling the derivative terms in the $\delta$-identity for all $\delta > 1$, by taking
$\epsilon \rightarrow 0^{+}$ and by using Lemma~\ref{lem:5.3} and Lemma~\ref{lem:5.4} (which gives the regularity of the limits). 

\begin{proof}[Proof of Theorem 5.2]

For all the inner products in the $\delta$-identity except the first and the seventh, we
can use the Cauchy-Schwartz inequality in the form
\[
\left\langle u_{1},\,u_{2}\right\rangle _{\tilde{L}_{\delta}^{2}}\leq\left\Vert u_{1}\right\Vert _{\tilde{L}_{2\delta_{1}}^{2}}\left\Vert u_{2}\right\Vert _{\tilde{L}_{2\delta_{2}}}
\]
for $\delta=\delta_{1}+\delta_{2}$, and then the AM-GM inequality
($ab\leq\frac{a^{2}}{4C}+Cb^{2}$) for the products of norms to obtain

\newpage{}

\begin{eqnarray}
&& \left\Vert I_{\epsilon,1}(t,\,\cdot)\right\Vert _{L^{2}\left(\Omega;\,\tilde{L}_{\delta}^2\right)}^{2} \nonumber \\
& & \qquad\leq\left\Vert \int_{\mathbb{R}}U_{0}(\cdot,\,z)\phi_{\epsilon}(z,\,\cdot)dz\right\Vert _{L^{2}\left(\Omega;\,\tilde{L}_{\delta}^2\right)}^{2} \nonumber \\
& & \qquad\qquad +\int_{0}^{t}\left\langle \frac{\partial}{\partial x}I_{\epsilon,h^{2}(z)}(s,\,\cdot),\,I_{\epsilon,1}(s,\,\cdot)\right\rangle _{L^{2}\left(\Omega;\,\tilde{L}_{\delta}^2\right)}ds \nonumber \\
& & \qquad\qquad -\delta\rho\int_{0}^{t}\left\langle \frac{\partial}{\partial x}I_{\epsilon,h\left(z\right)}(s,\cdot),\,I_{\epsilon,1}(s,\cdot)\right\rangle _{L^2\left(\Omega;\, \tilde{L}_{\delta-1}^2 \right)}ds \nonumber \\
&  & \qquad\qquad +\left|k_{1}\theta_{1}-\frac{\xi_{1}^{2}}{4}\right|\int_{0}^{t}C_1\left\Vert I_{\epsilon,z^{-\frac{1}{2}}}(s,\,\cdot)\right\Vert _{L^{2}\left(\Omega;\,\tilde{L}_{\delta}^2\right)}^{2}ds \nonumber \\
&  & \qquad\qquad +\left|k_{1}\theta_{1}-\frac{\xi_{1}^{2}}{4}\right|\int_{0}^{t}\frac{1}{4C_1}\left\Vert \frac{\partial}{\partial y}I_{\epsilon,1}(s,\,\cdot)\right\Vert _{L^{2}\left(\Omega;\,\tilde{L}_{\delta}^2\right)}^{2}ds \nonumber \\
&  & \qquad\qquad+\delta\left|k_{1}\theta_{1}-\frac{\xi_{1}^{2}}{4}\right|\int_{0}^{t}C_2\left\Vert I_{\epsilon,z^{-\frac{1}{2}}}(s,\,\cdot)\right\Vert _{L^{2}\left(\Omega;\,\tilde{L}_{\delta}^2\right)}^{2}ds
\nonumber \\
&  & \qquad\qquad +\delta\left|k_{1}\theta_{1}-\frac{\xi_{1}^{2}}{4}\right|\int_{0}^{t}\frac{1}{4C_2}\left\Vert I_{\epsilon,1}(s,\,\cdot)\right\Vert _{L^{2}\left(\Omega;\,\tilde{L}_{\delta-2}^{2}\right)}^{2}ds
\nonumber \\
&  & \qquad\qquad +k_{1}\left(C_1\int_{0}^{t}\left\Vert I_{\epsilon,z^{\frac{1}{2}}}(s,\,\cdot)\right\Vert _{L^{2}\left(\Omega;\,\tilde{L}_{\delta}^2\right)}^{2}ds+\frac{1}{4C_1}\int_{0}^{t}\left\Vert \frac{\partial}{\partial y}I_{\epsilon,1}(s,\,\cdot)\right\Vert _{L^{2}\left(\Omega;\,\tilde{L}_{\delta}^2\right)}^{2}ds\right)
\nonumber \\
&  & \qquad\qquad +\delta k_{1}\left(C_2\int_{0}^{t}\left\Vert I_{\epsilon,z^{\frac{1}{2}}}(s,\,\cdot)\right\Vert _{L^{2}\left(\Omega;\,\tilde{L}_{\delta-1}^{2}\right)}^{2}ds+\frac{1}{4C_2}\int_{0}^{t}\left\Vert I_{\epsilon,1}(s,\,\cdot)\right\Vert _{L^{2}\left(\Omega;\,\tilde{L}_{\delta-1}^{2}\right)}^{2}ds\right) \nonumber \\
&  & \qquad\qquad
-\int_{0}^{t}\left\langle \frac{\partial}{\partial x}I_{\epsilon,h^{2}(z)}(s,\,\cdot),\,\frac{\partial}{\partial x}I_{\epsilon,1}(s,\,\cdot)\right\rangle _{L^{2}\left(\Omega;\,\tilde{L}_{\delta}^2\right)}ds \nonumber \\
&  & \qquad\qquad +\rho_{1,1}^{2}\int_{0}^{t}\left\Vert \frac{\partial}{\partial x}I_{\epsilon,h(z)}(s,\,\cdot)\right\Vert _{L^{2}\left(\Omega;\,\tilde{L}_{\delta}^2\right)}^{2}ds \nonumber \\
&  & \qquad\qquad+\delta(\delta-1)\frac{\xi_{1}^{2}}{8}\int_{0}^{t}\left\Vert I_{\epsilon,1}(s,\,\cdot)\right\Vert _{L^{2}
\left(\Omega;\,\tilde{L}_{\delta-2}^{2}\right)}^{2}ds  \nonumber \\
&  & \qquad\qquad -\frac{\xi_{1}^{2}}{4}\left(1-\rho_{2,1}^{2}\right)\int_{0}^{t}\left\Vert \frac{\partial}{\partial y}I_{\epsilon,1}(s,\,\cdot)\right\Vert _{L^{2}\left(\Omega;\,\tilde{L}_{\delta}^2\right)}^{2}ds \nonumber \\
& & \qquad \qquad + C_1\left|\rho - \xi_{1}\rho_{3}\rho_{1,1}\rho_{2,1}\right| \int_{0}^{t}\left\Vert \frac{\partial}{\partial x}I_{\epsilon,h(z)}(s,\,\cdot)\right\Vert _{L^{2}\left(\Omega;\,\tilde{L}_{\delta}^2\right)}^{2}ds \nonumber \\
& & \qquad \qquad + \frac{1}{4C_1}\left|\rho - \xi_{1}\rho_{3}\rho_{1,1}\rho_{2,1}\right| \int_{0}^{t}\left\Vert \frac{\partial}{\partial y}I_{\epsilon,1}(s,\,\cdot)\right\Vert _{L^{2}\left(\Omega;\,\tilde{L}_{\delta}^2\right)}^{2}ds \label{eq:5.18}
\end{eqnarray}
and this for any $C_1,\, C_2>0$. If we choose $C_2=1$ and a large enough $C_1$ to have \begin{eqnarray*}\left(\left|k_{1}\theta_{1}-\frac{\xi_{1}^{2}}{4}\right|+\left|\rho - \xi_{1}\rho_{3}\rho_{1,1}\rho_{2,1}\right|+k_{1}\right)\frac{1}{4C_1}<\frac{\xi_{1}^{2}}{4}\left(1-\rho_{2,1}^{2}\right),
\end{eqnarray*}
then from \eqref{eq:5.18} we can obtain the following estimate
\begin{eqnarray}
& & \left\Vert I_{\epsilon,1}(t,\,\cdot)\right\Vert _{L^{2}\left(\Omega;\,\tilde{L}_{\delta}^2\right)}^{2}+M_1\int_{0}^{t}\left\Vert \frac{\partial}{\partial y}I_{\epsilon,1}(s,\,\cdot)\right\Vert _{L^{2}\left(\Omega;\,\tilde{L}_{\delta}^2\right)}^{2}ds \nonumber \\
& & \quad \leq\left\Vert \int_{\mathbb{R}}U_{0}(\cdot,\,z)\phi_{\epsilon}(z,\,\cdot)dz\right\Vert _{L^{2}\left(\Omega;\,\tilde{L}_{\delta}^2\right)}^{2}+M_2\sum_{\alpha\in\left\{ 0,-\frac{1}{2},\frac{1}{2}\right\} }\int_{0}^{t}\left\Vert I_{\epsilon,z^{\alpha}}(s,\,\cdot)\right\Vert _{L^{2}\left(\Omega;\,\tilde{L}_{\delta}^2\right)}^{2}ds \nonumber \\
& & \qquad +\delta M_2\sum_{\alpha\in\{0,\frac{1}{2}\}}\int_{0}^{t}\left\Vert I_{\epsilon,z^{\alpha}}(s,\,\cdot)\right\Vert _{L^{2}\left(\Omega;\,\tilde{L}_{\delta-1}^{2}\right)}^{2}ds \nonumber \\
& & \qquad +\delta M_2\int_{0}^{t}\left\Vert I_{\epsilon,1}(s,\,\cdot)\right\Vert _{L^{2}\left(\Omega;\,\tilde{L}_{\delta-2}^{2}\right)}^{2}ds
+ M_2\int_{0}^{t}\left\Vert \frac{\partial}{\partial x}I_{\epsilon,h(z)}(s,\,\cdot)\right\Vert _{L^{2}\left(\Omega;\,\tilde{L}_{\delta}^2\right)}^{2}ds \nonumber \\
& & \qquad +\int_{0}^{t}\left\langle \frac{\partial}{\partial x}I_{\epsilon,h^{2}(z)}(s,\,\cdot),\,I_{\epsilon,1}(s,\,\cdot)\right\rangle _{L^{2}\left(\Omega;\,\tilde{L}_{\delta}^2\right)}ds \nonumber \\
& & \qquad -\delta\rho\int_{0}^{t}\left\langle \frac{\partial}{\partial x}I_{\epsilon,h\left(z\right)}(s,\cdot),\,I_{\epsilon,1}(s,\cdot)\right\rangle _{L^2\left(\Omega;\, \tilde{L}_{\delta-1}^2 \right)}ds \nonumber \\
& & \qquad -\int_{0}^{t}\left\langle \frac{\partial}{\partial x}I_{\epsilon,h^{2}(z)}(s,\,\cdot),\,\frac{\partial}{\partial x}I_{\epsilon,1}(s,\,\cdot)\right\rangle _{L^{2}\left(\Omega;\,\tilde{L}_{\delta}^2\right)}ds, \label{eq:5.19}
\end{eqnarray}
for some positive constants $M_1$ and $M_2$. Now, for any function $g$, it is easy to check that with the notation of Lemma~\ref{lem:5.3} we have $I_{\epsilon,g(z)}=J_{g\cdot u,\epsilon}$ and $\frac{\partial}{\partial x}I_{\epsilon,g(z)}=J_{g\cdot u_{x},\epsilon}$.
Then, for any $\delta' \in \{\delta, \, \delta - 1, \, \delta - 2\}$ (since $\delta - 2 > -1$), by 2. of Lemma~\ref{lem:5.3} we can compute the limits
of these quantities in $L^{2}\left(\left[0, \, t \right] \times \Omega;\,\tilde{L}_{\delta}^{2}\right)$, which are equal to
\[
J_{g\cdot u}(s,\,x,\,v)=2vg(v^{2})u(s,\,x,\,v^{2})
\]
and 
\[
J_{g\cdot u_{x}}(s,\,x,\,v)=2vg(v^{2})u_{x}(s,\,x,\,v^{2})
\]
respectively, provided that they belong to $L^{2}\left(\left[0, \, t \right] \times \Omega;\,\tilde{L}_{\delta}^{2}\right)$. This can be verified by computing their norms in that space. For $g(z)=g_{1}(z)=z^{\alpha}$ and $g(z)=g_{2}(z)=h^{\beta}(z)$ and for all $0\leq t\leq T$, this computation gives
\begin{eqnarray}
 \left\Vert J_{g_{1}\cdot u}\right\Vert _{L_{y^{\delta'}}^{2}\left(\left[0,\,t\right]\times\Omega\times\mathbb{R}^{+}\times\mathbb{R}^{+}\right)}^{2}&=& 4\int_{0}^{t}\mathbb{E}\left[\int_{\mathbb{R}^+}\int_{\mathbb{R}^{+}}v^{4\alpha+2+\delta'}u^{2}(s,\,x,\,v^{2})dvdx\right]ds \nonumber \\
&=& 2\int_{0}^{t}\mathbb{E}\left[\int_{\mathbb{R}^+}\int_{\mathbb{R}^{+}}y^{2\alpha+\frac{1+\delta'}{2}}u^{2}(s,\,x,\,y)dydx\right]ds
\label{eq:5.20}
\end{eqnarray}
and
\begin{eqnarray}
\left\Vert J_{g_{2}\cdot u_{x}}\right\Vert _{L_{y^{\delta'}}^{2}\left(\left[0,\,t\right]\times\Omega\times\mathbb{R}^{+}\times\mathbb{R}^{+}\right)}^{2}&=& 4\int_{0}^{t}\mathbb{E}\left[\int_{\mathbb{R}^+}\int_{\mathbb{R}^{+}}v^{2+\delta'}h^{2\beta}\left(v^{2}\right)u^{2}(s,\,x,\,v^{2})dvdx\right]ds \nonumber \\
&=& 2\int_{0}^{t}\mathbb{E}\left[\int_{\mathbb{R}^+}\int_{\mathbb{R}^{+}}y^{\frac{1+\delta'}{2}}h^{2\beta}\left(y\right)u^{2}(s,\,x,\,y)dydx\right]ds, \nonumber \\
\label{eq:5.21}
\end{eqnarray}
which are both finite for all the combinations of $\alpha$, $\beta$ and $\delta'$ appearing in the norm terms of the RHS of \eqref{eq:5.19}, since it is easy to verify then that the exponent of $y$ in the RHS of both \eqref{eq:5.20} and \eqref{eq:5.21} is positive, and since $h$ has polynomial growth. Hence, 2. of Lemma~\ref{lem:5.3} and the continuity of the inner products imply that all the terms in the RHS of \eqref{eq:5.19} are convergent as $\epsilon\rightarrow 0^{+}$. Therefore, the RHS of \eqref{eq:5.19} is also bounded in $\epsilon$ and thus, Lemma~\ref{lem:5.4} applied on the $y$-derivative term in the LHS of that estimate implies that $\frac{\partial}{\partial v}J_{u}=\left(2vu(s,\,x,\,v^{2})\right)_{v}$
exists in $L^{2}\left(\left[0, \, t \right] \times \Omega;\,\tilde{L}_{\delta}^{2}\right)$ and that in this space we have $\frac{\partial}{\partial v}I_{\epsilon,1} = \frac{\partial}{\partial v}J_{u,\epsilon} \rightarrow \frac{\partial}{\partial v}J_u$ as $\epsilon \rightarrow 0^{+}$.

From the above we can easily deduce that $\left(u\left(s,\,x,\,v^{2}\right)\right)_{v}$
exists, which implies that the weak derivative $u_{y}(s,\,x,\,y)=\frac{1}{2\sqrt{y}}\frac{\partial}{\partial v}u\left(s,\,x,\,v^{2}\right)|{}_{v=\sqrt{y}}$ also
exists. Moreover, by using the standard inequality $(a - b)^2 \leq 2(a^2 + b^2)$ we have
\begin{eqnarray}
& & \int_{0}^{t}\mathbb{E}\left[\int_{\mathbb{R}^+}\int_{\mathbb{R}^{+}}y^{\frac{\delta+3}{2}}u_{y}^{2}(s,\,x,\,y)dydx\right]ds \nonumber \\
&& \qquad=\frac{1}{2}\int_{0}^{t}\mathbb{E}\left[\int_{\mathbb{R}^+}\int_{\mathbb{R}^{+}}v^{\delta+2}\left(\left(u\left(s,\,x,\,v^{2}\right)\right)_{v}\right)^{2}dvdx\right]ds \nonumber \\
&&  \qquad = \frac{1}{2}\int_{0}^{t}\mathbb{E}\left[\int_{\mathbb{R}^+}\int_{\mathbb{R}^{+}}v^{\delta}\left(\left(vu\left(s,\,x,\,v^{2}\right)\right)_{v}-u\left(s,\,x,\,v^{2}\right)\right)^{2}dvdx\right]ds \nonumber \\
& & \qquad \leq \frac{1}{2}\int_{0}^{t}\left\Vert \frac{\partial}{\partial y}J_{u}(s,\,\cdot)\right\Vert _{L_{y^{\delta}}^{2}\left(\Omega\times\mathbb{R}^{+}\times\mathbb{R}^{+}\right)}^{2}ds \nonumber \\
& & \qquad\qquad +\frac{1}{2}\int_{0}^{t}\mathbb{E}\left[\int_{\mathbb{R}^+}\int_{\mathbb{R}^{+}}y^{\frac{\delta-1}{2}}u^{2}\left(s,\,x,\,y\right)dydx\right]ds, \label{eq:5.22}
\end{eqnarray}
which is clearly finite since $\frac{\partial}{\partial v}J_{u} \in L^{2}\left(\left[0, \, t \right] \times \Omega;\,\tilde{L}_{\delta}^{2}\right)$. This gives the regularity result for weight exponents $\alpha = \frac{\delta+3}{2} \in \left(2, +\infty \right)$. Observe however that the limit of the RHS of \eqref{eq:5.19} as $\epsilon \rightarrow 0^{+}$ gives a bound for the first summand of the RHS of \eqref{eq:5.22} consisting of weighted $L^2$ norms of $u$ with weights $y^{\delta}$, $y^{\delta - 1}$ and $y^{\delta - 2}$ for $\delta > 1$. Then, by our regularity assumptions for $u$, we see that each of these norms is also finite for $\delta = 1$, and the same holds for the second summand of the RHS of \eqref{eq:5.22}. Hence, by using the Dominated Convergence Theorem, we see that the LHS of \eqref{eq:5.22} is bounded as $\delta \rightarrow 1^{+}$ and thus, it is also finite for $\delta = 1$ (by Fatou's lemma for example), which implies the desired regularity result also for $\alpha = 2$.
\end{proof}

\vspace*{.05in}

\begin{rem}
Observe that the smoothed quantities $J_{u,\epsilon}(\lambda,y)$ do not have to vanish as $y\to 0$, even 
though their limits $J_u(\lambda, y)$ decay linearly in $y$ near zero, so the integral of $J_{u,\epsilon}(\lambda,y)$ against 
$y^{\delta'}$ can explode at zero for $\delta' \leq -1$. It follows that Lemma~\ref{lem:5.3} does not work 
for $\delta' = -1$, since the weighted norms of the smoothed quantities can be infinite, while those 
of their limits are finite. This is why we had to work with the $\delta$-identity for $\delta > 1$ 
(implying $\delta' \geq \delta - 2 > -1$ wherever Lemma~\ref{lem:5.3} is used) and take $\delta 
\rightarrow 1^{+}$ only after taking $\epsilon \rightarrow 0^{+}$.
\end{rem}

\begin{rem}\label{rem:5.7} The flexibility in the choice of $\rho$ allows us to extend our results to the case where the idiosyncratic noises have nonzero correlation. Indeed, suppose that for any $i \geq 1$ we have $W^{i}_t = w_i\tilde{W}^{i}_t + \sqrt{1 - w^2_i}Z^i_t$ and $B^{i}_t = b_i\tilde{B}^{i}_t + \sqrt{1 - b^2_i}Z^i_t$, where $\tilde{W}_{\cdot}^{i}$, $\tilde{B}_{\cdot}^{i}$ and $Z_{\cdot}^i$ are pairwise independent standard Brownian Motions, and $w_i, b_i \in [-1, 0) \cup (0, 1]$. Then, we can obtain the convergence results of section 2 in exactly the same way, and the SPDE we obtain is the one treated in the previous section with 
\begin{eqnarray}
\rho = \xi_{1}\rho_{3}\rho_{1,1}\rho_{2,1} + \xi_{1}\sqrt{1 - \rho^{2}_{1,1}}\sqrt{1 - \rho^{2}_{2,1}}\sqrt{1 - w^{2}_{1}}\sqrt{1 - b^{2}_{1}} \label{eq:2017}
\end{eqnarray}
The extension will be complete if we manage to embed the measure-valued process $v_{t,C_1}$ in $L_{\alpha, \, C_1} \cap H_{\alpha, \, C_1}$ for all $\alpha \geq 0$ and for a given value of $C_1$, as we have done in Section~4 for the zero correlation case. Since $v_{t,C_1}$ can be expressed as a conditional law of the pair $(X_{\cdot}^1, \, \sigma_{\cdot}^1)$ as in the zero correlation case, this embedding can be done by conditioning on $Z_{\cdot}^1$ to reduce the problem to the zero correlation case, with $\sqrt{1 - \rho^{2}_{1,1}}W_{\cdot}^1$, $\sqrt{1 - \rho^{2}_{2,1}}B_{\cdot}^1$, $\rho_{1,1}W_{\cdot}^0$ and $\rho_{2,1}B_{\cdot}^0$ replaced by $w_1\sqrt{1 - \rho^{2}_{1,1}}\tilde{W}_{\cdot}^{1}$, $b_1\sqrt{1 - \rho^{2}_{2,1}}\tilde{B}_{\cdot}^{1}$, $\rho_{1,1}W_{\cdot}^0 + \sqrt{1 - w^{2}_1}Z_{\cdot}^1$ and $\rho_{2,1}B_{\cdot}^0 + \sqrt{1 - b^{2}_1}Z_{\cdot}^1$ respectively. This approach obviously fails when $w_1 = 0$ or $b_1 = 0$.
\end{rem}


\section{Discussion of Uniqueness}

The previous sections have established existence and regularity results for this class of stochastic volatility models arising from large portfolios. We would also like to prove that our problem has always a unique solution for a fixed coefficient vector $C_{1}$ and a fixed initial data function $U_{0}$. However the bad behaviour of the coefficients of the SPDE near zero render all the standard approaches to the question of uniqueness inapplicable.

Indeed, by following the same steps as in the proof of the $\delta$-identity but for $\delta = 0$, without integrating in $y$, and by using the product rule instead of integrating by parts, we can obtain $L_{\epsilon}(t, \, y) = R_{\epsilon}(t, \, y)$ where
\begin{eqnarray}
& & L_{\epsilon}(t, \, y) \nonumber \\
& & \qquad = \frac{\xi_{1}^{2}}{8} \left( \int_{0}^{t}\left\Vert I_{\epsilon,1}(s, \, \cdot, \, y)\right\Vert _{L^2 \left( \Omega \times \mathbb{R}^{+} \right)}^{2}ds \right)_{yy} \nonumber \\
& & \qquad \qquad +\rho \left( \int_{0}^{t}\left\langle \frac{\partial}{\partial x}I_{\epsilon,h\left(z\right)}(s, \, \cdot, \, y),\,I_{\epsilon,1}(s, \, \cdot, \, y)\right\rangle _{L^2 \left( \Omega \times \mathbb{R}^{+} \right)}ds \right)_{y} \nonumber \\
& & \qquad \qquad -\left(k_{1}\theta_{1}-\frac{\xi_{1}^{2}}{4}\right)\left(\int_{0}^{t}\left\langle I_{\epsilon,z^{-\frac{1}{2}}}(s, \, \cdot, \, y),I_{\epsilon,1}(s, \, \cdot, \, y)\right\rangle _{L^2 \left( \Omega \times \mathbb{R}^{+} \right)}ds\right)_y \nonumber \\
& & \qquad \qquad +k_{1}\left(\int_{0}^{t}\left\langle I_{\epsilon,z^{\frac{1}{2}}}(s, \, \cdot, \, y),I_{\epsilon,1}(s, \, \cdot, \, y)\right\rangle _{L^2 \left( \Omega \times \mathbb{R}^{+} \right)}ds\right)_y \label{eq:5.11u2}
\end{eqnarray}
and
\begin{eqnarray}
& & R_{\epsilon}(t, \, y) \nonumber \\
& & \qquad = \left\Vert I_{\epsilon,1}(t,\,\cdot, \, y)\right\Vert _{L^2 \left( \Omega \times \mathbb{R}^{+} \right)}^{2} - \left\Vert \int_{\mathbb{R}}U_{0}(\cdot,z)
\phi_{\epsilon}(z,\, y)dz\right\Vert _{L^2 \left( \Omega \times \mathbb{R}^{+} \right)}^{2} \nonumber \\
& & \qquad \qquad -\int_{0}^{t}\left\langle \frac{\partial}{\partial x}I_{\epsilon,h^{2}(z)}(s, \, \cdot, \, y),I_{\epsilon,1}(s,\, \cdot, \, y)\right\rangle _{L^2 \left( \Omega \times \mathbb{R}^{+} \right)}ds \nonumber \\
& & \qquad \qquad -\left(k_{1}\theta_{1}-\frac{\xi_{1}^{2}}{4}\right)\int_{0}^{t}\left\langle I_{\epsilon,z^{-\frac{1}{2}}}(s, \, \cdot, \, y),\frac{\partial}{\partial y}I_{\epsilon,1}(s, \, \cdot, \, y)\right\rangle _{L^2 \left( \Omega \times \mathbb{R}^{+} \right)}ds \nonumber \\
& & \qquad \qquad +k_{1}\int_{0}^{t}\left\langle I_{\epsilon,z^{\frac{1}{2}}}(s, \, \cdot, \, y),\frac{\partial}{\partial y}I_{\epsilon,1}(s, \, \cdot, \, y)\right\rangle _{L^2 \left( \Omega \times \mathbb{R}^{+} \right)}ds \nonumber \\
& & \qquad \qquad +\int_{0}^{t}\left\langle \frac{\partial}{\partial x}I_{\epsilon,h^{2}(z)}(s, \, \cdot, y),\,\frac{\partial}{\partial x}I_{\epsilon,1}(s,\, \cdot, y)\right\rangle _{L^2 \left( \Omega \times \mathbb{R}^{+} \right)}ds \nonumber \\
& & \qquad \qquad -\left(\xi_{1}\rho_{3}\rho_{1,1}\rho_{2,1} - \rho \right)\int_{0}^{t}\left\langle \frac{\partial}{\partial x}I_{\epsilon,h\left(z\right)}(s, \, \cdot, \, y),\,\frac{\partial}{\partial y}I_{\epsilon,1}(s, \, \cdot, \, y)\right\rangle _{L^2 \left( \Omega \times \mathbb{R}^{+} \right)}ds \nonumber \\
& & \qquad \qquad -\rho_{1,1}^{2}\int_{0}^{t}\left\Vert \frac{\partial}{\partial x}
I_{\epsilon,h(z)}(s, \, \cdot, \, y)\right\Vert _{L^2 \left( \Omega \times \mathbb{R}^{+} \right)}^{2}ds \nonumber \\ 
& & \qquad \qquad -\frac{\xi_{1}^{2}}{4}\left(\rho_{2,1}^{2} - 1 \right)\int_{0}^{t}\left\Vert \frac{\partial}{\partial y}
I_{\epsilon,1}(s, \, \cdot, \, y)\right\Vert _{L^2 \left( \Omega \times \mathbb{R}^{+} \right)}^{2}ds. \label{eq:5.11u3}
\end{eqnarray}
where we can use our regularity result to compute the limit of each term in $R_{\epsilon}(t, \, y)$, in an $L_{loc}^1$ sense as a function of $y$ and for any $t \geq 0$. Using this, we can deduce the convergence of each term in $L_{\epsilon}(t, \, y)$ in the same sense, which implies that the function $E(t, \, y) := \int_{0}^{t}\left\Vert 2yu(s, \, \cdot, \, y^2)\right\Vert _{L^2 \left( \Omega \times \mathbb{R}^{+} \right)}^{2}ds$ also has a locally integrable second derivative in $y$. Then, we can take $\epsilon \rightarrow 0^{+}$ on $L_{\epsilon}(t, \, y) = R_{\epsilon}(t, \, y)$ and substitute the limit of each term to obtain 
\begin{eqnarray}
E(t, \, y) &=& E(0, \, y) + \frac{{\xi_{1}}^2}{8}E_{yy}(t, \, y) - \left(k_{1}\theta_{1}-\frac{\xi_{1}^{2}}{4}\right)\frac{1}{2y}E_y(t, \, y) + \frac{k_1y}{2}E_y(t, \, y) \nonumber \\
& & \qquad + \left(k_{1}\theta_{1}-\frac{\xi_{1}^{2}}{4}\right)\frac{1}{y^2}E(t, \, y) + k_1E(t, \, y) \nonumber \\
& & \qquad -\left(1 - \rho_{1,1}^{2} \right)h^2(y^2)\int_{0}^{t}\left\Vert \frac{\partial}{\partial x}\left(2yu(s,\, \cdot, y^2)\right)\right\Vert _{L^2 \left( \Omega \times \mathbb{R}^{+} \right)}^{2}ds \nonumber \\
& & \qquad +\left(\xi_{1}\rho_{3}\rho_{1,1}\rho_{2,1} - \rho \right)h(y^2) \nonumber \\
& & \qquad \qquad \times \int_{0}^{t}\left\langle \frac{\partial}{\partial x}\left(2yu(s, \, \cdot, \, y^2)\right),\,\frac{\partial}{\partial y}\left(2yu(s, \, \cdot, \, y^2)\right)\right\rangle _{L^2 \left( \Omega \times \mathbb{R}^{+} \right)}ds \nonumber \\
& & \qquad -\frac{\xi_{1}^{2}}{4}\left(1 - \rho_{2,1}^{2} \right)\int_{0}^{t}\left\Vert \frac{\partial}{\partial y}
\left(2yu(s, \, \cdot, \, y^2)\right)\right\Vert _{L^2 \left( \Omega \times \mathbb{R}^{+} \right)}^{2}ds \label{eq:5.11u8}
\end{eqnarray}
where we can assume that $|\rho - \xi_{1}\rho_{3}\rho_{1,1}\rho_{2,1}| \leq \xi_{1}\sqrt{1 - \rho^{2}_{1,1}}\sqrt{1 - \rho^{2}_{2,1}}$, a condition obviously satisfied when $\rho = \xi_{1}\rho_{3}\rho_{1,1}\rho_{2,1}$, and then apply the Cauchy-Schwarz and AM-GM inequalities to show that the sum of the last three terms is negative. This implies that $E(t, \, y)$ satisfies:
\begin{eqnarray}
E_t(t, \, y) &\leq& E_t(0, \, y) + \frac{{\xi_{1}}^2}{8}E_{yy}(t, \, y) - \left(k_{1}\theta_{1}-\frac{\xi_{1}^{2}}{4}\right)\frac{1}{2y}E_y(t, \, y) + \frac{k_1y}{2}E_y(t, \, y) \nonumber \\
& & \qquad + \left(k_{1}\theta_{1}-\frac{\xi_{1}^{2}}{4}\right)\frac{1}{y^2}E(t, \, y) + k_1E(t, \, y) \label{eq:5.11u10}
\end{eqnarray}
under the boundary condition $E(t, \, 0) = 0$ for all $t \geq 0$, with the first order derivatives in $y$ being continuous classical derivatives (this follows from standard 1-dimensional Sobolev embeddings). This seems to be the best possible result we can have for the energy of a solution to our initial-boundary value problem, since all the norm estimates that can be obtained from Theorem~\ref{thm:5.2} can also be obtained by integrating \eqref{eq:5.11u10} against some power of $y$. Since the problem is linear, uniqueness follows if we can show that $E$ must vanish everywhere when we have zero initial data (which is equivalent to $E_t(0, \, y) = 0$ for all $y \geq 0$). However, this is an open problem as standard approaches to problems of this kind fail due to the unboundedness of the coefficient of the non-derivative term.

\vspace*{.05in}

A possible approach to the above problem would be to multiply \eqref{eq:5.11u10} (for $E_t(0, \, \cdot) = 0$) by some positive function of $y$ and integrate in $[0, \, +\infty)$, hoping to obtain an estimate where Gronwall's Lemma can be applied to give the desired result. This seems to fail since it leads to estimates involving different weighted norms of $E$, which are always non-equivalent due to the unboundedness of $\frac{1}{y^2}$ near zero. 

Another approach would be to try to use an argument like the standard parabolic maximum principle, i.e to obtain a zero maximum for the positive function $e^{g(y)t}E(t, \, y)$ by choosing a function $g$ that helps in the elimination of non-derivative terms in \eqref{eq:5.11u10}, and by recalling that when the maximum of a function is not attained at the boundary, the first order derivatives vanish and the second order ones are non-positive. Once more, the unboundedness of $\frac{1}{y^2}$ near zero does not allow for $g$ to be bounded, which causes extra problems as one can easily check.

Finally, if we try to implement either of the above approaches in the domain $[\epsilon, \, +\infty)$ for small $\epsilon > 0$, where the coefficient of the non-derivative term in \eqref{eq:5.11u10} is bounded, and then try to take $\epsilon \rightarrow 0^{+}$, we will see that the desired result can be obtained only when $E(t, \, y) = \mathcal{O}(e^{-\frac{c}{y^2}})$ near zero, for some $c > 0$. Of course, this is something we cannot expect since our CIR density does not vanish faster than $y^{\frac{2k_1\theta_{1}}{\xi_1^{2}}-1}$ as $y \rightarrow 0^{+}$.

\vspace*{.05in}

\begin{rem} 
The estimate \eqref{eq:5.11u10} can also be obtained in the case where the idiosyncratic Brownian Motions have correlation as in Remark~\ref{rem:5.7}, since by \eqref{eq:2017} we have
\begin{eqnarray*}
|\rho - \xi_{1}\rho_{3}\rho_{1,1}\rho_{2,1}| &=& \xi_{1}\sqrt{1 - \rho^{2}_{1,1}}\sqrt{1 - \rho^{2}_{2,1}}\sqrt{1 - w^{2}_{1}}\sqrt{1 - b^{2}_{1}} \nonumber \\
&& \leq \xi_{1}\sqrt{1 - \rho^{2}_{1,1}}\sqrt{1 - \rho^{2}_{2,1}}.
\end{eqnarray*}
a condition necessary for obtaining that estimate.
\end{rem}

\vspace*{.05in}

{\flushleft{\textbf{Acknowledgement}}\\[.1in]
The second author's work was supported financially by the United Kingdom Engineering and Physical Sciences Research Council {[}EP/L015811/1{]}}, and by the Foundation for Education and European Culture in Greece (founded by Nikos \& Lydia Tricha).

\appendix
\section{APPENDIX: Proofs of standard and technical results}

\begin{proof}[\textbf{Proof of Theorem 2.1}]
First, we consider each pair $\left(X_{.}^{i},\,\sigma_{.}^{i}\right)$
as a random variable taking values in the probability space $\left(C\left(\left[0,\,T\right];\,\mathbb{R}^{2}\right),\,||\centerdot||_{\infty},\,\mathcal{B}\right)$,
which is the space of continuous $\mathbb{R}^{2}$-valued functions
defined on $\left[0,\,T\right]$, equipped with the supremum norm
$||\centerdot||$ and the appropriate $\sigma$-algebra $\mathcal{B}$.

Since $\left[0,\,T\right]$ is a compact subinterval of
$\mathbb{R}$, $\mathcal{B}$ coincides with the usual
$\sigma$-algebra for the law of the $\mathcal{F}_{t}$-adapted process
$\left(X_{.}^{i},\,\sigma_{.}^{i}\right)$. Moreover, there is a function $S$ such that for each $\omega\in\Omega$
we can write
\[
\left(X_{.}^{i}(\omega),\,\sigma_{.}^{i}(\omega)\right)=S\left(B_{.}^{i}(\omega),\,W_{.}^{i}(\omega),\,B_{.}^{0}(\omega),\,W_{.}^{0}(\omega),\,x^{i}(\omega),\,\sigma_0^{i}(\omega),\,C_{i}(\omega)\right),
\]
since $\left(X_{.}^{i},\,\sigma_{.}^{i}\right)$ is obviously a strong
solution to \eqref{eq:model}.

For a permutation $\pi:\{ 1,2,...,N\} \to \{ 1,2,...,N\} $
and a collection $\left\{ G_{1},G_{2},...,G_{N}\right\} $ of
$\mathcal{B}$-measurable sets, the event   
\[
\left\{ \left(B_{.}^{\pi(i)},\,W_{.}^{\pi(i)},\,B_{.}^{0},\,W_{.}^{0},\,x^{\pi(i)},\,\sigma^{\pi(i)},\,C_{\pi(i)}\right)\in S^{-1}(G_{i}),\:\forall\,1\leq i\leq N\right\} 
\]
has a probability which is equal to 
\[
\mathbb{P}\left(\omega\in\Omega:\:\left(X_{.}^{\pi(i)}(\omega),\,\sigma_{.}^{\pi(i)}(\omega)\right)\in G_{i},\:\forall\,1\leq i\leq N\right).
\]
We claim that the law
\begin{eqnarray*}
&& \mathbb{P}\left(\omega\in\Omega:\:\left(X_{.}^{\pi(i)}(\omega),\,\sigma_{.}^{\pi(i)}(\omega)\right)\in G_{i},\:\forall\,1\leq i\leq N\right)\\
&& \qquad = \mathbb{P}\left(\left(B_{.}^{\pi(i)},\,W_{.}^{\pi(i)},\,B_{.}^{0},\,W_{.}^{0},\,x^{\pi(i)},\,\sigma_0^{\pi(i)},\,C_{\pi(i)}\right)\in S^{-1}\left(G_{i}\right),\:\forall\,1\leq i\leq N\right)
\end{eqnarray*}
is independent of the permutation $\pi$. Indeed, by a linear inversion
it is enough to show that the joint law of $\left\{ B_{.}^{\pi(i)},\,W_{.}^{\pi(i)},\,x^{\pi(i)},\,\sigma_0^{\pi(i)},\,C_{\pi(i)}:\:1\leq i\leq N\right\} 
\cup\left\{ B_{.}^{0},\,W_{.}^{0}\right\} $
is independent of the permutation $\pi\left(\cdot\right)$, which is a consequence of our exchangeability assumptions. 
As a result the set $\left\{ \left(X_{.}^{i},\,\sigma_{.}^{i}\right):\:1\leq i\leq N\right\} $
is an exchangeable set of $C\left(\left[0,\,T\right];\,\mathbb{R}^{2}\right)$-valued
random variables. Hence, by de Finetti's Theorem (see Theorem 4.1 in \cite{KKP}, but it can also be found in \cite{Aldous}), we
obtain that the sequence of measure-valued processes
\[
v_{*}^{N}={\displaystyle \frac{1}{N}\sum_{i=1}^{N}\delta_{X_{.}^{i},\,\sigma_{.}^{i}}}
\]
converges weakly to some probability measure $v_{*}$ (which is defined
on $\mathcal{B}$), $\mathbb{P}$-almost surely. Thus there is a set $\Omega'\subset\Omega$ where the
convergence is valid for any $\omega\in\Omega'$, where
$\mathbb{P}(\Omega')=1$.

Let $P_{t,s}:\,\left(C\left(\left[0,\,T\right];\,\mathbb{R}^{2}\right),\,||\centerdot||_{\infty},\,\mathcal{B}\right)\longrightarrow\mathbb{R}^{3}$
be an evaluation functional at some $(t,\,s)\in\left[0,\,T\right]^{2}$,
which maps $\left(f(\cdot),\,g(\cdot)\right)$ to $\left(f(t),\,g(t),\,g(s)\right)$
and which is obviously continuous. We fix an $\omega\in\Omega'$ and
we define $v_{3,t,s}=v_{*}\circ P_{t,s}^{-1}$ for all $\left(t,\,s\right)\in[0,\,T]^{2}$.
Then, for this $\omega$ and for any Borel set $A\subset\mathbb{R}^{3}$
we have
\begin{eqnarray*}
v_{3,t,s}^{N}(A) &=& \frac{1}{N}\#\left\{ 1\leq i\leq N:\,\left(X_{t}^{i},\,\sigma_{t}^{i},\,\sigma_{s}^{i}\right)\in A\right\} 
\\
&=& \frac{1}{N}\#\left\{ 1\leq i\leq N:\,P_{t,s}\left(X_{.}^{i},\,\sigma_{.}^{i}\right)\in A\right\} \\
&=& \frac{1}{N}\#\left\{ 1\leq i\leq N:\,\left(X_{.}^{i},\,\sigma_{.}^{i}\right)\in P_{t,s}^{-1}(A)\right\} =v_{*}^{N}(P_{t,s}^{-1}(A))
\end{eqnarray*}
for all $N\in\mathbb{N}$ and all $\left(t,\,s\right)\in[0,\,T]^{2}$. This means that for this $\omega$ and
for any $f\in C_{b}\left(\mathbb{R}^{3};\,\mathbb{R}\right)$ we have
\begin{equation}\label{eq:A.1}
\int_{\mathbb{R}^{2}}f dv_{3,t,s}^{N}=\int_{C\left(\left[0,\,T\right];\,\mathbb{R}^{2}\right)}f\circ P_{t,s} dv_{*}^{N},
\end{equation}
since we can easily show this for a sequence of simple functions approximating
$f$ from below and conclude then by the Monotone Convergence Theorem.
Taking now $N\rightarrow\infty$ we find
\begin{equation}\label{eq:A.2}
\int_{\mathbb{R}^{2}}fdv_{3,t,s}^{N}\rightarrow\int_{C\left(\left[0,\,T\right];\,\mathbb{R}^{2}\right)}f\circ P_{t,s}\cdot dv_{*}=\int_{\mathbb{R}^{2}}f dv_{3,t,s},
\end{equation}
for any $f\in C_{b}\left(\mathbb{R}^{3};\,\mathbb{R}\right)$, where
the last equality in \eqref{eq:A.2} is obtained exactly as \eqref{eq:A.1}. Since
this holds for any $f\in C_{b}\left(\mathbb{R}^{3};\,\mathbb{R}\right)$,
we have the desired convergence. 

Finally, to show continuity under the weak topology for
a given $\omega\in\Omega'$, we shall invoke the Portmanteau Theorem,
according to which we only need to show that
\[ \liminf_{n\rightarrow\infty}v_{3,t_{n},s_{n}}(A)\geq v_{3,t,s}(A) \]
whenever $\left(t_{n},\,s_{n}\right)\rightarrow\left(t,\,s\right)\in\left[0,\,T\right]^{2}$
and for any open $A\subset\mathbb{R}^{3}$. This is obtained by observing
that
\begin{eqnarray*}
v_{3,t,s}(A) &=& v_{*}\left(P_{t,s}^{-1}(A)\right) \\
&=& v_{*}\left(\left\{ \left(Y_{.},\,Z_{.}\right)\in C\left(\left[0,\,T\right];\,\mathbb{R}^{2}\right):\,\left(Y_{t},\,Z_{t},\,Z_{s}\right)\in A\right\} \right) \\
&=&  v_{*}\left( \cup_{k=1}^{\infty}\cap_{n=k}^{\infty}\left\{ \left(Y_{.},\,Z_{.}\right)\in C\left(\left[0,\,T\right];\,\mathbb{R}^{2}\right):\,\left(Y_{t_{n}},\,Z_{t_{n}},\,Z_{s_{n}}\right)\in A\right\} \right),
\end{eqnarray*}
which holds because $\left(Y_{t_{n}},\,Z_{t_{n}},\,Z_{s_{n}}\right)\rightarrow\left(Y_{t},\,Z_{t},\,Z_{s}\right)$
by the continuity of the path $\left(Y_{.},\,Z_{.}\right)$, and hence
$\left(Y_{t_{n}},\,Z_{t_{n}},\,Z_{s_{n}}\right)$ is finally contained
in any open set containing $\left(Y_{t},\,Z_{t},\,Z_{s}\right)$.
Then, the last quantity is equal to
\begin{eqnarray*}
& & \lim_{k\rightarrow\infty} v_{*}\left(\cap_{n=k}^{\infty}\left\{ \left(Y_{.},\,Z_{.}\right)\in C\left(\left[0,\,T\right];\,\mathbb{R}^{2}\right):\,\left(Y_{t_{n}},\,Z_{t_{n}},\,Z_{s_{n}}\right)\in A\right\} \right) \\
& & \qquad\leq \lim_{k\rightarrow\infty}\inf_{n\geq k} v_{*}\left(\left\{ \left(Y_{.},\,Z_{.}\right)\in C\left(\left[0,\,T\right];\,\mathbb{R}^{2}
\right):\,\left(Y_{t_{n}},\,Z_{t_{n}},\,Z_{s_{n}}\right)\in A\right\} \right) \\
&& \qquad=\liminf_{n\rightarrow\infty}v_{*}\left(P_{t_{n},s_{n}}^{-1}(A)\right)\\
&& \qquad=\liminf_{n\rightarrow\infty}v_{3,t_{n},s_{n}}(A)
\end{eqnarray*}
and the desired continuity has been proven. Since this continuous
limit process of measures is obtained almost surely for $(t,\,s)\in[0,\,n]^{2}$,
for any $n\in\mathbb{N}$, with $\mathbb{N}$ being countable, it
is actually obtained almost surely for all $t,\,s\geq0$. The proof
of the Theorem is now complete.
\end{proof}

\begin{proof}[\textbf{Proof of Lemma 3.2}]
Observe that we only need to prove our claim for $p=n\in\mathbb{N}$. By Ito's formula we have
\begin{eqnarray*}
\sigma_{t}^{n}&=& \sigma_{0}^{n}+\int_{0}^{t}n\left(\sigma_{s}^{n-1}k\left(\theta-\sigma_{s}\right)+\frac{\xi^{2}}{2}\left(n-1\right)\sigma_{s}^{n-1}\right)ds
\\
&&\qquad +n\xi\int_{0}^{t}\sigma_{s}^{n-1/2}d\left(\sqrt{1-\rho_{2}^{2}}B_{s}^{1}+\rho_{2}B_{s}^{0}\right)
\\
&\leq& \sigma_{0}^{n}+C_{1}T+n\xi\int_{0}^{t}\sigma_{s}^{n-1/2}d\left(\sqrt{1-\rho_{2}^{2}}B_{s}^{1}+\rho_{2}B_{s}^{0}\right)
\end{eqnarray*}
for some $C_{1}>0$ when $t\leq T$, since the quantity within the
Riemann integral is a polynomial of a negative leading coefficient (thus upper bounded in the positive reals), computed at the CIR process $\sigma_{s}$ which is always non-negative (as we pointed out before introducing our model).
Taking supremum for $t\leq T$, then taking expectations and finally
using Cauchy-Schwartz and Doob's inequalities, we obtain:
\begin{eqnarray*}
\mathbb{E}\left[M_{T}^{n}\right] &\leq & \mathbb{E}\left[\sigma_{0}^{n}\right]+C_{1}T
+n\xi\mathbb{E}^{\frac{1}{2}}\left[\sup_{0\leq t\leq T}\left(\int_{0}^{t}\sigma_{s}^{n-1/2}d\left(\sqrt{1-\rho_{2}^{2}}B_{s}^{1}+\rho_{2}B_{s}^{0}\right)\right)^{2}\right] \\
&\leq &\mathbb{E}\left[\sigma_{0}^{n}\right]+C_{1}T+n\xi\mathbb{E}^{\frac{1}{2}}\left[\int_{0}^{T}\sigma_{s}^{2n-1} ds\right] \\
&\leq &\mathbb{E}\left[\sigma_{0}^{n}\right]+C_{1}T+C_{2}\int_{0}^{T}\mathbb{E}^{\frac{1}{2}}\left[\sigma_{s}^{2n-1}\right]ds,
\end{eqnarray*}
for some $C_{2}>0$, where we have set $M_{T}={\displaystyle \sup_{t\leq T}\sigma_{t}^{n}}$.
The first expectation of the RHS of the last equation is finite by
our assumptions for the initial data. To obtain the desired result
for the CIR process, it suffices to show that the expectation within
the last Riemann integral is bounded for $0\leq s\leq T$. For this,
we recall Theorem~\ref{thm:3.1} and Remark 2 from pages 8-9 in \cite{HK06}, from
which we can easily obtain (after conditioning on the initial value)
\[
\mathbb{E}\left[\sigma_{s}^{2n-1}\right]\leq C_{3}\sum_{k=0}^{2n-1}\gamma_{s}^{k-2n+1}\mathbb{E}\left[\sigma_{0}^{k}\right],
\]
for all $0\leq s\leq T$ and some $C_{3}>0$, where $\gamma_{s}=\frac{2k}{\xi^{2}}\left(1-e^{-ks}\right)^{-1}$.
The RHS of the above inequality is bounded for $0<s\leq T$, since
$\sigma_{0}$ has bounded moments and since $\gamma_{s}>\frac{2k}{\xi^{2}}>0$
for all $0<s\leq T$.
 
Finally, the desired result for $\left\{ u_{t}^{2}:\,t\geq0\right\}$
can be obtained in a much easier way, since we have an explicit formula for the Ornstein-Uhlenbeck process. Indeed, by using this formula we can control the maximum of the process by $\sqrt{\sigma^{0}}$ an by the maximum of a Brownian Motion in $\left[0, \, T\right]$ (up to a constant factor), where the last is normally distributed and thus it has a finite second moment. The proof of the Lemma is now complete.
\end{proof}

\begin{proof}[\textbf{Proof of Lemma 3.3}]
First we set $v_{t}=\sqrt{\sigma_{t}}$ and, as our assumptions ensure that 
$\sigma$ does not hit 0, we can apply Ito's formula to equation \eqref{eq:3.1} to obtain
\begin{equation}\label{eq:3.3}
dv_{t}=\left[\left(\frac{k\theta}{2}-\frac{\xi^{2}}{8}\right)\frac{1}{v_{t}}-\frac{k}{2}v_{t}\right]dt+\frac{\xi}{2}dB_{t},
\end{equation}
where $B_{t}:=\sqrt{1-\rho_{2}^{2}}B_{t}^{1}+\rho_{2}B_{t}^{0}$ is
a standard Brownian Motion. Since the CIR process is an $L^{1}$-integrable
process (this follows from Lemma~\ref{lem:3.2}), $v_{t}$ is an $L^{2}$-integrable
process. Consider now for any $\epsilon>0$, a twice continuously
differentiable and increasing cut-off function $\Phi^{\epsilon}(x)$
satisfying
\[
\Phi^{\epsilon}(x)=\begin{cases}
1 & \mbox{if} \:\:x\geq2\epsilon,\\
0 & \mbox{if} \:\:x<\epsilon.
\end{cases}
\]
Then the derivative satisfies
\[
\frac{\partial}{\partial x}\Phi^{\epsilon}(x)=\begin{cases}
0 & \mbox{if}\:\:x\geq2\epsilon,\\
0 & \mbox{if}\:\:x<\epsilon.
\end{cases}
\]
Moreover, we define: $J^{\epsilon}(x)=\frac{\Phi^{\epsilon}(x)}{x}$
for $x>0$ and $J^{\epsilon}(0)=0$, and we observe that this function
is bounded and continuously differentiable with
\[
\frac{\partial}{\partial x}J^{\epsilon}(x)=\begin{cases}
-\frac{1}{x^{2}} & \:\:x\geq2\epsilon,\\
0 & \:\:x<\epsilon,
\end{cases}
\]
which is also bounded and non-positive for any $\epsilon>0$.

Let $\{v_{t}^{\epsilon}:\,t\geq0\}$ be the unique solution
to the SDE
\begin{equation}\label{eq:3.4}
dv_{t}^{\epsilon}=\left[\left(\frac{k\theta}{2}-\frac{\xi^{2}}{8}\right)J^{\epsilon}(v_{t}^{\epsilon})-\frac{k}{2}v_{t}^{\epsilon}\right]dt
+\frac{\xi}{2}dB_{t}
\end{equation}
for an arbitrary $\epsilon>0$, where $B_{\cdot}$ is the same Brownian
motion as in \eqref{eq:3.3}. For any $\epsilon>0$, Theorem 2.2.1 from page
102 of \cite{Nualart} implies that $v_{t}^{\epsilon}$ is Malliavin differentiable
with respect to the Brownian motion $B_{\cdot}^{1}$. By looking at
the proof of that Theorem, we can see that the underlying probability
measure does not play any role, as long as we are differentiating
with respect to the path of a Brownian motion, which means that here
we always have Malliavin differentiability under the probability measure
$\mathbb{P}(\cdot\,|\,B_{\cdot}^{0},\,\mathcal{G})$. Under that conditional
probability measure, by the same Theorem and the remark after its
proof we have that the Malliavin derivative of $v_{t}^{\epsilon}$
(with respect to $B_{\cdot}^{1}$) satisfies the integral equation
\[
D_{t'}v_{t}^{\epsilon}=\frac{\xi\sqrt{1-\rho^{2}_{2}}}{2}+\int_{t'}^{t}\left[\left(\frac{k\theta}{2}-\frac{\xi^{2}}{8}\right)\frac{\partial}{\partial x}J^{\epsilon}(v_{s}^{\epsilon})-\frac{k}{2}\right]D_{t'}v_{s}^{\epsilon}ds, \:\:\forall\,t\geq t'\geq0, 
\]
This can be solved in $t$ to give
\begin{equation}\label{eq:3.5}
D_{t'}v_{t}^{\epsilon}=\frac{\xi\sqrt{1-\rho^{2}_{2}}}{2}e^{\int_{t'}^{t}\left[\left(\frac{k\theta}{2}-\frac{\xi^{2}}{8}\right)\frac{\partial}{\partial x}J^{\epsilon}(v_{s}^{\epsilon})-\frac{k}{2}\right]ds},\:\:\forall\,t\geq t'\geq 0.
\end{equation}

As mentioned in \cite{AE2}, the stopping time $\tau_{\epsilon}=\inf\{t>0:\,v_{t}\leq\epsilon\}$
tends to $\infty$ as $\epsilon\rightarrow0$ and we also have $v_{t}^{\epsilon}=v_{t}^{\tau_{2\epsilon}}=v_{t}\geq2\epsilon, \:
\forall\,t\leq\tau_{2\epsilon}$
(this can be seen by observing that when we stop \eqref{eq:3.3} at $\tau_{2\epsilon}$, we can substitute the $\frac{1}{v_t^{\tau_{2\epsilon}}}$ term by the equal $J^{\epsilon}\left(v_t^{\tau_{2\epsilon}}\right)$ and obtain exactly \eqref{eq:3.4} stopped at $\tau_{2\epsilon}$ but for $v_t$ instead of $v_t^{\epsilon}$, so since this stopped SDE has a pathwise unique solution, $v_t$ and $v_t^{\epsilon}$ must coincide up to time $\tau_{2\epsilon}$),
$\mathbb{P}$-almost surely. It follows then that $v_{t}^{\epsilon}\rightarrow v_{t}$
and also $\int_{t'}^{t}\frac{\partial}{\partial x}J^{\epsilon}(v_{s}^{\epsilon})ds\rightarrow-\int_{t'}^{t}\frac{1}{v_{s}^{2}}ds$
for all $t\geq t'\geq0$ as $\epsilon\rightarrow0^{+}$, $\mathbb{P}$-almost
surely. Hence we have
\[
\mathbb{E}\left[\mathbb{P}\left(v_{t}^{\epsilon}\rightarrow v_{t},\:\forall\,t\geq0\,|\,B_{\cdot}^{0},\,\mathcal{G}\right)\right]=\mathbb{P}\left(v_{t}^{\epsilon}\rightarrow v_{t},\:\forall\,t\geq0\right)=1,
\]
which implies that 
\begin{equation}\label{eq:3.6}
\mathbb{P}\left(v_{t}^{\epsilon}\rightarrow v_{t},\:\forall\,t\geq0\,|\,B_{\cdot}^{0},\,\mathcal{G}\right)=1
\end{equation}
$\mathbb{P}$-almost surely. Similarly, we can deduce that 
\begin{equation}\label{eq:3.7}
\mathbb{P}\left(\int_{t'}^{t}\frac{\partial}{\partial x}J^{\epsilon}(v_{s}^{\epsilon})ds\rightarrow-\int_{t'}^{t}\frac{1}{v_{s}^{2}}ds,
\:\forall\,t\geq0\,|\,B_{\cdot}^{0},\,\mathcal{G}\right)=1
\end{equation}
$\mathbb{P}$-almost surely.

Furthermore, it is shown in \cite{AE2} that
\[
\mathbb{E}\left[\mathbb{P}\left(|v_{t}^{\epsilon}|\leq|u_{t}|+|v_{t}|,\:\forall\,t\geq0\,|\,B_{\cdot}^{0},\,\mathcal{G}\right)\right]=\mathbb{P}\left(|v_{t}^{\epsilon}|\leq|v_{t}|+|u_{t}|,\:\forall\,t\ge0\right)=1,
\]
where $u_{t}$ is the Ornstein - Uhlenbeck process of Lemma~\ref{lem:3.2}, while we also have
\begin{eqnarray*}
\mathbb{E}\left[\sup_{0\leq t\leq T}\mathbb{E}\left[(|v_{t}|+|u_{t}|)^{2}\,|\,B_{\cdot}^{0},\,\mathcal{G}\right]\right] &\leq & \mathbb{E}\left[\sup_{0\leq t\leq T}(|v_{t}|+|u_{t}|)^{2}\right] \\
&\leq & 2(||\sup_{0\leq t\leq T}v_{t}||_{L^{2}\left(\Omega\right)}^{2}+||\sup_{0\leq t\leq T}u_{t}||_{L^{2}\left(\Omega\right)}^{2}),
\end{eqnarray*}
which is finite by the two results of Lemma~\ref{lem:3.2}. This means that $\mathbb{P}$-almost
surely we have also
\begin{equation}\label{eq:3.8}
\mathbb{P}\left(|v_{t}^{\epsilon}|\leq|u_{t}|+|v_{t}|\:\forall\,t\geq0\,|\,B_{\cdot}^{0},\,\mathcal{G}\right)=1
\end{equation}
and
\begin{equation}\label{eq:3.9}
\mathbb{E}\left[(|v_{t}|+|u_{t}|)^{2}|\,B_{\cdot}^{0},\,\mathcal{G}\right]<\infty
\end{equation}
for all $0\leq t\leq T$.
By \eqref{eq:3.6}, \eqref{eq:3.7}, \eqref{eq:3.8} and \eqref{eq:3.9}, we have that there exists
an $\Omega_{0}$ of full probability such that for all $\omega\in\Omega_{0}$ and all $0\leq t'\leq t\leq T$, both $v_{t}^{\epsilon}$
and $D_{t'}v_{t}^{\epsilon}$ converge $\mathbb{P}(\cdot\,|\,B_{\cdot}^{0},\,\mathcal{G})$-almost surely to $v_{t}$ and 
\[
V_{t,t'}=\frac{\xi\sqrt{1-\rho^{2}_{2}}}{2}e^{-\int_{t'}^{t}\left[\left(\frac{k\theta}{2}-\frac{\xi^{2}}{8}\right)\frac{1}{v_{t}^{2}}+\frac{k}{2}\right]ds}
\]
respectively as $\epsilon\rightarrow0$, while $\left\{ v_{t}^{\epsilon}:\,t\geq0\right\} $
is dominated by an $L_{B_{\cdot}^{0},\,\mathcal{G}}^{2}$-integrable process and $D_{t'}v_{t}^{\epsilon}\leq\frac{\xi\sqrt{1-\rho^{2}_{2}}}{2}$ for
all $\epsilon>0$ and all $0\leq t'\leq t\leq T$.
Thus, we can apply the Dominated Convergence Theorem to deduce that
the last two convergences hold also in $L_{B_{\cdot}^{0},\,\mathcal{G}}^{2}$
and $L_{B_{\cdot}^{0},\mathcal{G}}^{2}\left(\left[0,\,t\right]\times\Omega\right)$ respectively,
for all $0\leq t\leq T$ and all $\omega\in\Omega_{0}$. Then, by
Lemma~1.2.3 from \cite{Nualart} (page 30) we obtain that $D_{t'}v_{t}$ exists
and is equal to $V_{t,t'}$, 
\begin{equation}\label{eq:3.10}
D_{t'}v_{t}=\frac{\xi\sqrt{1-\rho^{2}_{2}}}{2}e^{-\int_{t'}^{t}\left[\left(\frac{k\theta}{2}-\frac{\xi^{2}}{8}\right)\frac{1}{\sigma_{s}^{1}}-\frac{k}{2}\right]ds}
\end{equation}
for all $0\leq t'\leq t\leq T$ and all $\omega\in\Omega_{0}$.

Finally, for any $n\in\mathbb{N},$ let $f_{n}$ be a smooth
and compactly supported function such that $f_{n}(x)=x$ for all $x\leq n$
and $\left\Vert \frac{\partial}{\partial x}f_{n}\right\Vert _{\infty}=1$.
By Lemma~\ref{lem:3.2}, we have
\[
\mathbb{E}\left[\sup_{0\leq t\leq T}\mathbb{E}\left[\sigma_{t}^{2}\,|\,B_{\cdot}^{0},\,\mathcal{G}\right]\right]\leq\mathbb{E}\left[\sup_{0\leq t\leq T}\sigma_{t}^{2}\right]<\infty,
\]
which implies that $\sigma_{t}\in L_{B_{\cdot}^{0},\,\mathcal{G}}^{2}$ for
all $\omega\in\Omega_{1}$ and all $0\leq t\leq T$, where $\Omega_{1}\subset\Omega_{0}$
is a set of full probability. Then, for all $\omega\in\Omega_{1}$ and all $0\leq t\leq T$, the Dominated Convergence Theorem implies that $f_{n}^{2}(v_{t})\rightarrow\sigma_{t}$
in $L_{B_{\cdot}^{0},\,\mathcal{G}}^{2}$ as $n\rightarrow\infty$ (since
we obviously have $\mathbb{P}(\cdot\,|\,B_{\cdot}^{0},\,\mathcal{G})$-almost
sure convergence and domination by $v_{t}^{2}=\sigma_{t}$). Moreover, for all $\omega\in\Omega_{1}$ and all $0\leq t' \leq t \leq T$, the standard Malliavin chain rule implies that
\[
D_{t'}f_{n}^{2}(v_{t})=2f_{n}(v_{t})f_{n}'(v_{t})D_{t'}v_{t}\rightarrow2v_{t}D_{t'}v_{t}
\]
$\mathbb{P}(\cdot\,|\,B_{\cdot}^{0},\,\mathcal{G})$ - almost surely as $n\rightarrow\infty$,
while we have also domination by $2v_{t}D_{t'}v_{t}\leq\xi\rho_{2}v_{t}\in L_{B_{\cdot}^{0},\,\mathcal{G}}^{2}$. Thus, we can use the Dominated Convergence
Theorem once more to see that the last convergence holds also in $L_{B_{\cdot}^{0},\mathcal{G}}^{2}\left(\left[0,\,t\right]\times\Omega\right)$, for all $\omega\in\Omega_{1}$ and all $0 \leq t \leq T$.
Recalling now Lemma~1.2.3 from \cite{Nualart} again, we deduce that $D_{t'}\sigma_{t}$
exists in $L_{B_{\cdot}^{0},\mathcal{G}}^{2}\left(\left[0,\,t\right]\times\Omega\right)$
and it is equal to $2v_{t}D_{t'}v_{t}$, thus
\[
D_{t'}\sigma_{t}=\xi\sqrt{1-\rho^{2}_{2}}e^{-\int_{t'}^{t}\left[\left(\frac{k\theta}{2}-\frac{\xi^{2}}{8}\right)\frac{1}{\sigma_{s}^{1}}-\frac{k}{2}\right]ds}\sqrt{\sigma_{t}}
\]
which is exactly \eqref{eq:3.2}. The proof is now complete.
\end{proof}

\begin{proof}[\textbf{Proof of Lemma 3.4}]
Fix $t>0$. Consider the sequence of stochastic processes (in $t'\in\left[0,\,t\right]$)
\[
v_{t,t'}^{n}=\xi\sqrt{1-\rho^{2}_{2}}e^{-\int_{t'}^{t}\left[\left(\frac{k\theta}{2}-\frac{\xi^{2}}{8}\right)\frac{1}{\sigma_{s}+\frac{1}{n}}+\frac{k}{2}\right]ds}g_{n}\left(\sqrt{\sigma_{t}+\frac{1}{n}}\right),\;\:\forall\,n\in\mathbb{N},
\]
where the smooth and increasing cut-off function $g_{n}$ satisfies
\[
g_{n}(x)=\begin{cases}
x, & \;\;0\leq x\leq n,\\
-1, & \;\;x\leq-2,\\
n+1, & \;\;x\geq n+2,
\end{cases}
\]
and has a derivative which is bounded by $1$. This process is uniformly
bounded by $\xi\sqrt{1-\rho^{2}_{2}}\sqrt{\sigma_{s}+1}\in L^{p}\left(\Omega\right),\:\forall\,p\geq1$,
since 
\[
\sqrt{\sigma_{s}+1}\leq\frac{1}{2}{\displaystyle \sup_{0\leq s\leq T}}\left(\sigma_{s}+2\right),
\]
which has finite moments by Lemma~\ref{lem:3.2}. Thus we have also $\xi\sqrt{1-\rho^{2}_{2}}\sqrt{\sigma_{s}+1}\in L_{B_{\cdot}^{0},\,\mathcal{G}}^{p}\left(\Omega\right)$ for all $p>1$,
$\mathbb{P}$-almost surely. Moreover, by the Monotone Convergence
Theorem, $v_{t,t'}^{n}$ converges pointwise to $D_{t'}\sigma_{t}$
as $n\rightarrow\infty$, so by the Dominated Convergence Theorem
we see that this convergence holds also in $L_{B_{\cdot}^{0},\,\mathcal{G}}^{p}\left(\Omega\right)$
for any $t'<t$, and also in $L_{B_{\cdot}^{0},\,\mathcal{G}}^{p}\left(\Omega;\,L^{2}\left(\left[0,\,t\right]\right)\right)$,
for any $p\geq1$, $\mathbb{P}$-almost surely.

Next, observe that
\begin{equation}\label{eq:3.14}
v_{t,t'}^{n}=f\left(\int_{t'}^{t}h_{n}\left(\sigma_{s}\right)ds\right)g_{n}\left(\sqrt{\sigma_{t}+\frac{1}{n}}\right),
\end{equation}
where $f,\,h_{n}$ are sufficiently smooth functions with bounded
first derivatives, such that $f(x)=\sqrt{1-\rho^{2}_{2}}\xi e^{-x}$ and $h_{n}(x)=\frac{k}{2}+\left(\frac{k\theta}{2}-\frac{\xi^{2}}{8}\right)
\frac{1}{x+\frac{1}{n}}$ for $x>0$ and $n\in\mathbb{N}$, and $f(x)=h_{n}(x)=0$ for $x<-1$
and $n\in\mathbb{N}$. Now we recall the standard Malliavin chain
rule, so almost surely, under the probability measure $\mathbb{P}(\cdot\,|\,B_{\cdot}^{0},\,\mathcal{G})$,
we have
\[
D_{t''}h_{n}\left(\sigma_{s}\right)=h'_{n}\left(\sigma_{s}\right)D_{t''}\sigma_{s},
\]
which is bounded for any $n\in\mathbb{N}$, so we can integrate in
$s$ and intechange the integral with the derivative to obtain
\begin{equation}\label{eq:3.15}
D_{t''}\int_{t'}^{t}h_{n}\left(\sigma_{s}\right)ds=\int_{t'}^{t}h'_{n}\left(\sigma_{s}\right)D_{t''}\sigma_{s}ds.
\end{equation}

Next, observe that all the arguments in \eqref{eq:3.14} are positive,
so by applying the same Malliavin chain rule and by substituting from \eqref{eq:3.15} we obtain
\begin{eqnarray}
D_{t''}v_{t,t'}^{n}&=& f'\left(\int_{t'}^{t}h_{n}\left(\sigma_{s}\right)ds\right)\int_{t'}^{t}h'_{n}\left(\sigma_{s}\right)D_{t''}\sigma_{s}ds g_{n}\left(\sqrt{\sigma_{t}+\frac{1}{n}}\right) \nonumber \\
& & \qquad +f\left(\int_{t'}^{t}h_{n}\left(\sigma_{s}\right)ds\right)g'_{n}\left(\sqrt{\sigma_{t}+\frac{1}{n}}\right)\frac{D_{t''}\sigma_{t}}{2\sqrt{\sigma_{t}+\frac{1}{n}}} \nonumber \\
&=&\sqrt{1-\rho^{2}_{2}}\xi e^{-\int_{t'}^{t}\left(\frac{k}{2}+\left(\frac{k\theta}{2}-\frac{\xi^{2}}{2}\right)\frac{1}{\sigma_{s}+\frac{1}{n}}\right)ds}\int_{t'}^{t}\frac{\left(\frac{k\theta}{2}-\frac{\xi^{2}}{8}\right)D_{t''}\sigma_{s}ds}{\left(\sigma_{s}+\frac{1}{n}\right)^{2}}g_{n}\left(\sqrt{\sigma_{t}+\frac{1}{n}}\right) \nonumber \\
&& \qquad +\sqrt{1-\rho^{2}_{2}}\xi e^{-\int_{t'}^{t}\left(\frac{k}{2}+\left(\frac{k\theta}{2}-\frac{\xi^{2}}{2}\right)\frac{1}{\sigma_{s}+\frac{1}{n}}\right)ds}g'_{n}\left(\sqrt{\sigma_{t}+\frac{1}{n}}\right)\frac{D_{t''}\sigma_{t}}{2\sqrt{\sigma_{t}+\frac{1}{n}}}. \nonumber \\
\label{eq:3.16}
\end{eqnarray}

Now we want to bound the above quantity by some process
in $L_{B_{\cdot}^{0},\,\mathcal{G}}^{q'}\left(\Omega;\,L^{2}\left[\left[0,\,t\right]^{2}\right]\right)$,
uniformly in $n\in\mathbb{N}$, so we can apply again the Dominated
Convergence Theorem, for some $q'>1$. Observe that $D_{t''}\sigma_{s}\leq\xi\sqrt{1-\rho^{2}_{2}}\sqrt{\sigma_{s}}$
(by \eqref{eq:3.2}) and that $0\leq\frac{d}{dx}g_{n}(x)\leq1\Rightarrow g_{n}(x)\leq x$
for all $n\in\mathbb{N}$, so if we drop the summand$-\int_{t'}^{t}\frac{k}{2}$
from the exponents in \eqref{eq:3.16} we obtain
\begin{eqnarray*}
D_{t''}v_{t,t'}^{n} &\leq & \xi\sqrt{1-\rho^{2}_{2}}e^{-\int_{t'}^{t}\left(\frac{k\theta}{2}-\frac{\xi^{2}}{2}\right)\frac{1}{\sigma_{s}+\frac{1}{n}}ds}\int_{t'}^{t}\frac{\sqrt{\sigma_{s}}ds}{\left(\sigma_{s}+\frac{1}{n}\right)^{2}}\sqrt{\sigma_{t}+\frac{1}{n}} \\
& & \qquad +\xi\sqrt{1-\rho^{2}_{2}}e^{-\int_{t'}^{t}\left(\frac{k\theta}{2}-\frac{\xi^{2}}{2}\right)\frac{1}{\sigma_{s}+\frac{1}{n}}ds}\frac{\sqrt{\sigma_{t}}}{2\sqrt{\sigma_{t}+\frac{1}{n}}} \nonumber \\ 
&<& \xi\sqrt{1-{\rho_{2}}^{2}}\left[\int_{0}^{T}\frac{ds}{\sigma_{s}^{\frac{3}{2}}}\sup_{0\leq s\leq T}\sqrt{\sigma_{s}+1}+1\right]
\end{eqnarray*}
whose $L_{B_{\cdot}^{0},\,\mathcal{G}}^{q'}\left(\Omega;\,L^{2}\left[\left[0,\,t\right]^{2}\right]\right)$
norm is bounded by
\begin{eqnarray}
& & \xi\sqrt{1-\rho^{2}_{2}}t\left(\mathbb{E}\left[\left(\int_{0}^{T}\frac{ds}{\sigma_{s}^{\frac{3}{2}}}\sup_{0<s\leq T}\sqrt{\sigma_{s}+1}\right)^{q'}\right]+1\right)^{\frac{1}{q'}} \nonumber \\
& & \qquad \leq  \xi\sqrt{1-\rho^{2}_{2}}t\left(\mathbb{E}^{\frac{q'}{p}}\left[\left(\int_{0}^{T}\frac{ds}{\sigma_{s}^{\frac{3}{2}}}\right)^{p}\right]\mathbb{E}^{\frac{q'}{p'}}\left[\left(\sup_{0<s\leq T}\sqrt{\sigma_{s}+1}\right)^{p'}\right]+1\right)^{\frac{1}{q'}}\nonumber  \\
& & \qquad \leq \xi\sqrt{1-\rho^{2}_{2}}t\left(T^{\frac{q'}{p'}}\mathbb{E}^{\frac{q'}{p}}\left[\int_{0}^{T}\frac{ds}{\sigma_{s}^{\frac{3p}{2}}}\right]\mathbb{E}^{\frac{q'}{p'}}\left[\left(\sup_{0<s\leq T}\sqrt{\sigma_{s}+1}\right)^{p'}\right]+1\right)^{\frac{1}{q'}}, \nonumber \\
\label{eq:3.17}
\end{eqnarray}
where $\frac{1}{p}+\frac{1}{p'}=\frac{1}{q'}$. 

The second expectation of \eqref{eq:3.17} is finite for all $p'<\infty\Leftrightarrow p>q'$
because of the estimate $\left(\sigma_{s}+1\right)^{\frac{p'}{2}}\leq C_{1}\left(\sigma_{s}^{p'}+1\right)$
for some $C_{1}>0$ and Lemma~\ref{lem:3.2}. On the other hand, if $\frac{2k\theta}{\xi^{2}}>\frac{3p}{2}$,
the first expectation of \eqref{eq:3.17} can be computed by recalling Theorem~3.1 from \cite{HK06} as follows
\begin{equation}\label{eq:3.18}
\mathbb{E}\left[\int_{0}^{T}\frac{ds}{\sigma_{s}^{\frac{3p}{2}}}\right]=\mathbb{E}\left[\int_{0}^{T}\mathbb{E}\left[\sigma_{s}^{-\frac{3p}{2}}\,|\,\sigma_{0}\right]ds\right]=\lambda_{1}\mathbb{E}\left[\int_{0}^{T}\gamma_{s}^{\frac{3p}{2}}H\left(-\gamma_{s}\sigma_{0}e^{-ks}\right)ds\right]
\end{equation}
where $\lambda_{1}>0$, $\gamma_{s}=\frac{2k}{\xi^{2}}\left(1-e^{-ks}\right)^{-1}>\frac{2k}{\xi^{2}}$
for all $s\geq0$, and $H$ is a hypergeometric function for which
we have the asymptotic estimate of page 17 in \cite{HK06}. That estimate
(for $N=0$) easily gives $H(-z)\leq\lambda_{2}|z|^{-\frac{3p}{2}}$
for some $\lambda_{2}>0$ and all $z\geq0$. Thus, by \eqref{eq:3.18} we
find
\[
\mathbb{E}\left[\int_{0}^{T}\frac{ds}{\sigma_{s}^{\frac{3p}{2}}}\right]\leq\lambda_{1}\lambda_{2}\mathbb{E}\left[\int_{0}^{T}e^{\frac{3kps}{2}}\sigma_{0}^{-\frac{3p}{2}}\right]ds=\lambda_{1}\lambda_{2}\int_{0}^{T}e^{\frac{3kps}{s}}ds\mathbb{E}\left[\sigma_{0}^{-\frac{3p}{2}}\right]
\]
which is finite by our initial data assumptions if and only
if $\frac{2k\theta}{\xi^{2}}>\frac{3p}{2}$. Thus, the RHS of \eqref{eq:3.17}
if finite iff $\frac{2k\theta}{\xi^{2}}>\frac{3p}{2}$. This can be
achieved by making $p$ sufficiently close to $q'$, provided that:
$\frac{2k\theta}{\xi^{2}}>\frac{3q'}{2}$ which is equivalent to $q'<\frac{4k\theta}{3\xi^{2}}$.
We can choose such a $q'>1$ since we have $\frac{4k\theta}{3\xi^{2}}>1$.
Observe that the same condition is assumed in \cite{AE1} to obtain $L^{1}$
regularity, but for our purpose, we are going to need this $L^{q}$ regularity
for some $q$ strictly bigger than $1$. Moreover,
we need to have a finite $L_{B_{\cdot}^{0},\,\mathcal{G}}^{q'}\left(\Omega;\,L^{2}\left[\left[0,\,t\right]^{2}\right]\right)$
norm, $\mathbb{P}$-almost surely, and this is obtained by
the law of total expectation as follows
\[
\mathbb{E}\left[\mathbb{E}\left[\left(\int_{0}^{T}\frac{ds}{\sigma_{s}^{\frac{3}{2}}}\sup_{0<s\leq T}\sqrt{\sigma_{s}+1}\right)^{q'}+1\,|\,B_{\cdot}^{0},\,\mathcal{G}\right]\right]
=\mathbb{E}\left[\left(\int_{0}^{T}\frac{ds}{\sigma_{s}^{\frac{3}{2}}}\sup_{0<s\leq T}\sqrt{\sigma_{s}+1}\right)^{q'}+1\right], \]
so we have
\[
\mathbb{E}\left[\left(\int_{0}^{T}\frac{ds}{\sigma_{s}^{\frac{3}{2}}}\sup_{0<s\leq T}\sqrt{\sigma_{s}+1}\right)^{q'}+1\,|\,B_{\cdot}^{0},\,\mathcal{G}\right]<\infty
\]
for all $\omega$ in some $\Omega'\subset\Omega$ of full probability.
Thus, the pointwise convergence of $D_{t''}v_{t,t'}^{n}$ to the RHS
of \eqref{eq:3.11} and the Dominated Convergence Theorem imply that we have
the same convergence in $L_{B_{\cdot}^{0},G}^{q}\left(\Omega;\,L^{2}\left[\left[0,\,t\right]^{2}\right]\right)$
for all $\omega\in\Omega'$. Then, since $v_{t,t'}^{n}$ converges
to $D_{t'}\sigma_{t}$ in $L_{B_{\cdot}^{0},\,\mathcal{G}}^{q'}\left(\Omega\right)$
for any $t'<t$ and also in $L_{B_{\cdot}^{0},\,\mathcal{G}}^{q'}\left(\Omega;\,L^{2}\left(\left[0,\,t\right]\right)\right)$,
for any $p\geq1$ and any $\omega\in\Omega_{2}\subset\Omega'$, we
deduce that $\sigma_{t}\in\mathbb{D}^{2,q'}$ under the probability
measure $\mathbb{P}(\cdot\,|\,B_{\cdot}^{0},\,\mathcal{G})$) with respect
to $B_{\cdot}^{1}$, with the second Malliavin Derivative being given
by \eqref{eq:3.11}, $\mathbb{P}$-almost surely. It follows also that \eqref{eq:3.12}
holds, since the sequence converging pointwise to the RHS of \eqref{eq:3.11}
is dominated by a random quantity of finite positive moments, uniformly
in $t,\,t',\,t''\in[0,\,T]$. The proof for $v_{t}=\sqrt{\sigma_{t}}$
is similar, the only difference is the absence of the functions $g_{n}$
and the terminal value term.
\end{proof}

\begin{proof}[\textbf{Proof of Lemma 3.5}]
For any $a\geq b$, we define $\psi(y)=\mathbb{I}_{[b,\,a]}(y)$
and $\phi(y)=\int_{-\infty}^{y}\psi(z)dz$. The standard Malliavin
Chain rule implies that $\phi(F)\in\mathbb{D}^{1,2}$ and moreover
\begin{eqnarray*}
\left\langle u_{.},\,D.\phi(F)\right\rangle _{L^{2}}&=&\left\langle u_{.},\,\psi(F)D_{.}F\right\rangle _{L^{2}}  \\
&=& \psi(F)\left\langle u_{.},\,D_{.}F\right\rangle _{L^{2}}
\end{eqnarray*}
 and dividing by $<u_{.},\,D_{.}F>_{L^{2}}$ yields
\begin{equation}
\psi(F)=\left\langle \frac{u_{.}}{\left\langle u_{.},\,D_{.}F\right\rangle _{L^{2}}},\,D.\phi(F)\right\rangle _{L^{2}} 
\label{eq:3.19}
\end{equation}

Next, by Proposition~1.5.4 of \cite{Nualart} (page 69), $\frac{u_{t}}{<u_{.},\,D_{.}F>_{L^{2}}}$
belongs to the domain of $\delta$, the adjoint of the Malliavin derivative operator, and there exists a constant $C>0$
such that
\begin{equation}\label{eq:3.20}
\mathbb{E}^{\frac{1}{q'}}\left[\left|\delta\left(\frac{u_{.}}{\left\langle u_{.},\,D_{.}F\right\rangle }\right)\right|^{q'}\right]\leq C\mathbb{E}^{\frac{1}{q'}}\left[\left(\left\Vert D_{.}\frac{u_{.}}{\left\langle u_{.},\,D_{.}F\right\rangle _{L^{2}}}\right\Vert _{L^{2}\left(\left[0,\,T\right]^{2}\right)}\right)^{q'}\right]<\infty.
\end{equation}
Hence \eqref{eq:3.19} implies
\begin{eqnarray}
\mathbb{P}\left(b\leq F\leq a\right)&=& \mathbb{E}\left(\psi(F)\right)=\mathbb{E}\left[\delta\left(\frac{u_{.}}{\left\langle u_{.},\,D_{.}F\right\rangle _{L^{2}}}\right)\phi(F)\right] \nonumber \\
&=& \mathbb{E}\left[\int_{-\infty}^{F}\mathbb{I}_{[b,\,a]}(z)\delta\left(\frac{u_{.}}{\left\langle u_{.},\,D_{.}F\right\rangle _{L^{2}}}\right)dz\right] \nonumber \\
&=& \mathbb{E}\left[\int_{b}^{a}\mathbb{I}_{\{z\leq F\}}\delta\left(\frac{u_{.}}{\left\langle u_{.},\,D_{.}F\right\rangle _{L^{2}}}\right)dz\right] \label{eq:3.21}
\end{eqnarray}
Now, by Holder's inequality and \eqref{eq:3.20} we have that the quantity
\[
p(z)=\mathbb{E}\left[\mathbb{I}_{\{z\leq F\}}\delta\left(\frac{u_{.}}{\left\langle u_{.},\,D_{.}F\right\rangle _{L^{2}}}\right)\right]
\]
is bounded, thus by Fubini's Theorem and \eqref{eq:3.21} we obtain
\[
\mathbb{P}\left(b\leq F\leq a\right)=\int_{b}^{a}\mathbb{E}\left[\mathbb{I}_{\{z\leq F\}}\delta\left(\frac{u_{.}}{\left\langle u_{.},\,D_{.}F\right\rangle _{L^{2}}}\right)\right]dz
\]
Therefore, the probability density exists and is equal to
$p(z)$, which is bounded as mentioned above. Moreover, since the
quantity within the expectation is dominated by $\delta\left(\frac{u_{.}}{\left\langle u_{.},\,D_{.}F\right\rangle _{L^{2}}}\right)$,
which is in $L^{q'}$ by \eqref{eq:3.20}, the Dominated Convergence Theorem
implies that the density is also continuous. Furthermore, for $\alpha\geq0$,
by Holder's inequality and \eqref{eq:3.20}, we have
\begin{eqnarray*}
y^{\alpha}p(y)&=& \mathbb{E}\left[y^{\alpha}\mathbb{I}_{\{y\leq F\}}\delta\left(\frac{u_{.}}{\left\langle u_{.},\,D_{.}F\right\rangle _{L^{2}}}\right)\right] \\
&\leq &\mathbb{E}^{\frac{q'-1}{q'}}\left[y^{\frac{\alpha q'}{q'-1}}\mathbb{I}_{y\leq F}\right]\mathbb{E}^{\frac{1}{q'}}\left[\left|\delta\left(\frac{u_{.}}{\left\langle u_{.},\,D_{.}F\right\rangle }\right)\right|^{q'}\right] \\
&\leq & C\mathbb{E}\left[F^{\frac{\alpha q'}{q'-1}}\right]\mathbb{E}^{\frac{1}{q'}}\left[\left(\left\Vert D_{.}\frac{u_{.}}{\left\langle u_{.},\,D_{.}F\right\rangle _{L^{2}}}\right\Vert _{L^{2}\left(\left[0,\,T\right]^{2}\right)}\right)^{q'}\right]<\infty,
\end{eqnarray*}
for any $y>0$ and the desired estimate follows.
\end{proof}

\begin{proof}[\textbf{Proof of Lemma 4.2}]
By using the
bounded convergence theorem we can easily obtain $\sigma_{.}^{m}\rightarrow\sigma_{.}^{0}$
and $\sqrt{\sigma_{.}^{m}}\rightarrow\sqrt{\sigma_{.}^{0}}$ in $L^{2}$,
as $m\rightarrow\infty$. 

To prove our claim, we consider the process $Y_{.}^{m}$ satisfying the same
SDE and initial condition as $X_{.}^{m}$ for any $m \in \mathbb{N}\cup \{0\}$,
but without being stopped when it hits zero. We will show first that for a
subsequence $\{m_{k}:\,k\in\mathbb{N}\}\subset\mathbb{N}$, we have
almost surely: $Y_{.}^{m_{k}}\rightarrow Y_{.}^{0}$ uniformly on any
compact interval $\left[0,\,T\right]$, and there exists a $k_{0}\in\mathbb{N}$
such that $Y_{t}^{m_{k}}<Y_{t}^{0}\:\forall\,k\geq k_{0}$ and all $t\leq T$.
Indeed, we have
\[
\sup_{t\leq T}\left|Y_{t}^{m}-Y_{t}^{0}\right|=\sup_{t\leq T}\left|X_{0}^{m}-X_{0}^{0}-\frac{1}{2}\int_{0}^{t} 
\left(\sigma_{s}^{m}-\sigma_{s}^{0}\right)ds+\int_{0}^{t}\left(\sqrt{\sigma_{s}^{m}}-\sqrt{\sigma_{s}^{0}}\right)
dW_{s}\right|, \]
This is bounded by
\begin{eqnarray*}
&& \left|X_{0}^{m}-X_{0}^{0}\right|+\frac{1}{2}\int_{0}^{T}\left|\sigma_{s}^{m}-\sigma_{s}^{0}\right|ds+
\sup_{t\leq T}\left|\int_{0}^{t}\left(\sqrt{\sigma_{s}^{m}}-\sqrt{\sigma_{s}^{0}}\right)dW_{s}\right|
\\
&& \qquad =\min\left\{\frac{x_{0}}{2},\,l_{m}\right\}+\frac{1}{2}\left\Vert \sigma_{\cdot}^{m}-\sigma_{\cdot}^{0}
\right\Vert _{L^{1}\left[0,T\right]}+\sup_{t\leq T}\left|\int_{0}^{t}\left(\sqrt{\sigma_{s}^{m}} 
-\sqrt{\sigma_{s}^{0}}\right)dW_{s}\right|,
\end{eqnarray*}
where the first two terms tend obviously to zero, while the last term tends to zero in probability due to 
Doob's Martingale inequality for $p=2$ and Ito's isometry, so along a subsequence, the whole quantity 
tends almost surely to zero. Next, we have
\begin{eqnarray}
Y_{t}^{m}-Y_{t}^{0} &=& -\min\left\{\frac{x_{0}}{2},\,l_{m}\right\}-\frac{1}{2}\int_{0}^{t}\left(\sigma_{s}^{m}-\sigma_{s}^{0} 
\right)ds +\int_{0}^{t}\left(\sqrt{\sigma_{s}^{m}}-\sqrt{\sigma_{s}^{0}}\right)dW_{s} \nonumber \\
& \leq & -\min\left\{\frac{x_{0}}{2},\,l_{m}\right\}+\sup_{t\leq T}\left|\int_{0}^{t} 
\left(\sqrt{\sigma_{s}^{m}}-\sqrt{\sigma_{s}^{0}}\right)dW_{s}\right|,
\label{eq:4.5}
\end{eqnarray}
and once more, by Doob's Martingale inequality for $p=2$ and 
$l_{m}^{0}=\frac{\min\{\frac{x_{0}}{2},\,l_{m}\}}{2}$, we have
\begin{eqnarray*}
 & & \mathbb{P}\left(\sup_{t\leq T}\left|\int_{0}^{t}\left(\sqrt{\sigma_{s}^{m}}-\sqrt{\sigma_{s}^{0}}\right)
dW_{s}\right|>l_{m}^{0}\,|\,\mathcal{F}_{0}\right) \\
& & \qquad \qquad \leq  \frac{1}{\left(l_{m}^{0}\right)^{2}} 
\mathbb{E}\left(\left(\int_{0}^{T}\left(\sqrt{\sigma_{s}^{m}}-\sqrt{\sigma_{s}^{0}}\right)dW_{s}\right)^{2}\,|
\,\mathcal{F}_{0}\right)\\
& &\qquad \qquad=\frac{\left(l_{m}\right)^{4}}{\left(l_{m}^{0}\right)^{2}},
\end{eqnarray*}
where $l_{m}^{0}=\frac{l_{m}}{2}\rightarrow0$ for large $m$. Thus,
there exists a subsequence $\{m_{k}:\,k\in\mathbb{N}\}\subset\mathbb{N}$
such that
\[
\sum_{k=1}^{\infty}\frac{\left(l_{m_{k}}\right)^{4}}{\left(l_{m_{k}}^{0}\right)^{2}}<\infty
\]
which implies that almost surely, $\sup_{t\leq T}\left|\int_{0}^{t}\left(\sqrt{\sigma_{s}^{m_{k}}} 
-\sqrt{\sigma_{s}^{0}}\right)dW_{s}\right|<l_{m_{k}}^{0}=\frac{l_{m_{k}}}{2}$
for all large $k$ (by the Borel-Cantelli lemma). Therefore, by \eqref{eq:4.5}
we obtain
\begin{equation}\label{eq:4.6}
\sup_{t\leq T}\left(Y_{t}^{m_{k}}-Y_{t}^{0}\right)\leq-l_{m_{k}}+\frac{l_{m_{k}}}{2}=-\frac{l_{m_{k}}}{2}<0
\end{equation}
almost surely for all large $k$.

We are ready now to prove the uniform convergence of the
stopped processes. For a fixed event, the stopping times $\tau^{m_{k}},\,\tau^{0}$
are given and \eqref{eq:4.6} implies that $\tau^{m_{k}}\leq\tau^{0}$ for all
large $k$. Moreover, we have $\tau^{m_{k}}\rightarrow\tau^{0}$ as $k\rightarrow\infty$.
Indeed, for any $\epsilon>0$, $Y^{m_{k}}$ is lower bounded by a
positive constant in $\left[0,\,\tau^{0}-\epsilon\right]$ for all $k$
bigger than some $k^{0}(\epsilon)$ (since the same holds for the
continuous process $Y_{\cdot}^{0}$ by the definition of $\tau^{0}$, and
since $Y_{\cdot}^{m_{k}}$ tends uniformly to $Y_{\cdot}^{0}$), which
implies that $\tau^{0}-\epsilon<\tau^{m_{k}}\leq\tau^{0}$ for all $k\geq k^{0}(\epsilon)$.
Now, if we have $\tau^{0}>T$, then we have also $\tau^{m_{k}}>T$ for
all big enough $k$, which gives
\[
\lim_{k\rightarrow\infty}\sup_{0\leq t\leq T}\left|X_{t}^{m_{k}}-X_{t}^{0}\right|=\lim_{k\rightarrow\infty}\sup_{0\leq t\leq T}
\left|Y_{t}^{m_{k}}-Y_{t}^{0}\right|=0.
\]
On the other hand, if $\tau^{0}\leq T$ we have
\begin{equation}\label{eq:4.7}
\sup_{t\leq T}\left|X_{t}^{m_{k}}-X_{t}^{0}\right|=\max\left\{ \sup_{0\leq t\leq\tau^{m_{k}}}\left|X_{t}^{m_{k}}-X_{t}^{0}\right|,\,\sup_{\tau^{m_{k}}\leq t\leq\tau^{0}}\left|X_{t}^{m_{k}}-X_{t}^{0}\right|,\,\sup_{\tau^{0} \leq t\leq T}\left|X_{t}^{m_{k}}-X_{t}^{0}\right|\right\}. 
\end{equation}

The first supremum of the RHS of the above is equal to the
supremum of $\left|Y_{t}^{m_{k}}-Y_{t}^{0}\right|$ for $t\leq\tau^{m_{k}}$,
which tends to zero since $Y_{\cdot}^{m_{k}}\rightarrow Y_{\cdot}^{0}$ uniformly in $\left[0,\,T\right]$,
while the third one is always equal to $0$. Hence, we only need to
show that the second supremum of the RHS of \eqref{eq:4.7} tends also to
$0$ as $k\rightarrow\infty$. Indeed, for some $\tau^{m_{k}}\leq t_{k}\leq\tau^{0}$,
we have
\[
\sup_{\tau^{m_{k}}\leq t\leq\tau^{0}}\left|X_{t}^{m_{k}}-X_{t}^{0}\right|=\left|Y_{t_{k}}^{0}\right|\rightarrow\left|Y_{\tau^{0}}^{0}\right|=0,
\]
as $k\rightarrow\infty$ (by the continuity of $Y_{\cdot}^{0}$) so the
desired result follows.
\end{proof}

\begin{proof}[\textbf{Proof of Lemma 5.3}]
Observe that by setting $z = v^2, v \in \mathbb{R}^{+}$, any integration against $\phi_{\epsilon}$ can be written as an integration against the standard heat kernel, i.e
\begin{eqnarray*}
J_{u,\epsilon}(\lambda,\,y)&=& \mathbb{\int_{\mathbb{R}^{+}}}u(\lambda,\,z)\frac{1}{\sqrt{2\pi\epsilon}}e^{-\frac{(\sqrt{z}-y)^{2}}{2\epsilon}}dz
\\
&=& \int_{\mathbb{R}}2vu\left(\lambda,\,v^{2}\right)\mathbb{I}_{\mathbb{R}^{+}}(v)\frac{1}{\sqrt{2\pi\epsilon}}e^{-\frac{(v-y)^{2}}{2\epsilon}}dv
\end{eqnarray*}
We are going to prove 1. first. Observe that by our regularity assumptions and the properties of the standard heat kernel, $J_{u,\epsilon}(\lambda,\,y)$ is smooth and it's $n$-th derivative in $y$ equals 
\begin{eqnarray*}
\int_{\mathbb{R}^{+}}2vu\left(\lambda,\,v^{2}\right)\frac{1}{\sqrt{2\pi\epsilon}}P(v - y)e^{-\frac{(v-y)^{2}}{2\epsilon}}dv
\end{eqnarray*}
where $P$ is some polynomial of degree $n$. Thus we need to show that for any $\delta > 0$ and $n \in \mathbb{N}$ we have
\begin{eqnarray*}
\int_{\Lambda}\int_{\mathbb{R}^{+}}y^{\delta'}\left(\int_{\mathbb{R}^{+}}2vu\left(\lambda,\,v^{2}\right)(v - y)^{n}\frac{e^{-\frac{(v-y)^{2}}{2\epsilon}}}{\sqrt{2\pi\epsilon}}dv\right)^{2}dyd\mu(\lambda) < \infty
\end{eqnarray*}
 
By Cauchy-Schwartz, the above quantity is bounded by:

\begin{eqnarray}
& & \int_{\Lambda}\int_{\mathbb{R}^{+}}y^{\delta'}\left(\int_{\mathbb{R}^{+}}4v^{2}u^{2}\left(\lambda,\,v^{2}\right)(v - y)^{2n}\frac{e^{-\frac{(v-y)^{2}}{2\epsilon}}}{\sqrt{2\pi\epsilon}}dv\right)\left(\int_{\mathbb{R}}\frac{e^{-\frac{(v-y)^{2}}{2\epsilon}}}{\sqrt{2\pi\epsilon}}dv\right)dyd\mu(\lambda) \nonumber \\
&&\qquad = \int_{\Lambda}\int_{\mathbb{R}^{+}}\left(\int_{\mathbb{R}^{+}}y^{\delta'}4v^{2}u^{2}\left(\lambda,\,v^{2}\right)(y - v)^{2n}\frac{e^{-\frac{(v-y)^{2}}{2\epsilon}}}{\sqrt{2\pi\epsilon}}dv\right)dyd\mu(\lambda) \nonumber \\
\end{eqnarray}
and thus, by Fubini's Theorem, we only need to show that

\begin{eqnarray*}
\int_{\Lambda}\int_{\mathbb{R}^{+}}4v^{2}u^{2}\left(\lambda, \, v^{2}\right)\left(\int_{\mathbb{R}^{+}}y^{\delta'}(y - v)^{2n}\frac{e^{-\frac{(v-y)^{2}}{2\epsilon}}}{\sqrt{2\pi\epsilon}}dy\right)dvd\mu(\lambda) < \infty \nonumber \\
\end{eqnarray*}
for which it suffices to show that 
\begin{eqnarray*}
&& \int_{\mathbb{R}^{+}}y^{\delta'}(y - v)^{2n} \frac{e^{-\frac{(v-y)^{2}}{2\epsilon}}}{\sqrt{2\pi\epsilon}}dy = \mathcal{O}\left(v^{\delta'} + 1\right) 
\end{eqnarray*}
due to our integrability assumptions for $J_u\left(\lambda, \, v\right) = 2vu\left(\lambda, \, v^2\right)$.

For $\delta' \geq 0$, we use the well known estimate $(a + b)^{\delta'} \leq C(|a|^{\delta'} + |b|^{\delta'})$ to obtain
\begin{eqnarray}
&& \int_{\mathbb{R}^{+}}y^{\delta'}(y - v)^{2n} \frac{e^{-\frac{(v-y)^{2}}{2\epsilon}}}{\sqrt{2\pi\epsilon}}dy \nonumber \\
& & \qquad \leq Cv^{\delta'}\int_{\mathbb{R}}(y - v)^{2n} \frac{e^{-\frac{(v-y)^{2}}{2\epsilon}}}{\sqrt{2\pi\epsilon}}dy 
+ C\int_{\mathbb{R}}(|y - v|)^{2n+\delta'} \frac{e^{-\frac{(v-y)^{2}}{2\epsilon}}}{\sqrt{2\pi\epsilon}}dy \nonumber \\
& & \qquad \leq C{\epsilon}^{n}v^{\delta'}\int_{\mathbb{R}}w^{2n} \frac{e^{-\frac{w^{2}}{2}}}{\sqrt{2\pi}}dw 
+ C{\epsilon}^{n+\frac{\delta'}{2}}\int_{\mathbb{R}}(|w|)^{2n+\delta'} \frac{e^{-\frac{w^2}{2}}}{\sqrt{2\pi}}dw \nonumber \\
\label{eq:more}
\end{eqnarray} 
which is exactly what we wanted.

On the other hand, for $\delta' \in \left(-1, 0 \right]$, we have
\begin{eqnarray}
&& \int_{\mathbb{R}^{+}}y^{\delta'}(y - v)^{2n} \frac{e^{-\frac{(v-y)^{2}}{2\epsilon}}}{\sqrt{2\pi\epsilon}}dy \nonumber \\
& & \qquad = \int_{0}^{\frac{v}{2}}y^{\delta'}(v - y)^{2n} \frac{e^{-\frac{(v-y)^{2}}{2\epsilon}}}{\sqrt{2\pi\epsilon}}dy 
+ \int_{\frac{v}{2}}^{+\infty}y^{\delta'}(v - y)^{2n} \frac{e^{-\frac{(v-y)^{2}}{2\epsilon}}}{\sqrt{2\pi\epsilon}}dy \nonumber \\
& & \qquad \leq \frac{{(2\epsilon)}^{n}}{\sqrt{\pi}}\int_{0}^{\frac{v}{2}}y^{\delta'}\left(\frac{(v - y)^2}{2\epsilon}\right)^{n+\frac{1}{2}} \frac{e^{-\frac{(v-y)^{2}}{2\epsilon}}}{|v - y|}dy
+ {\left(\frac{v}{2}\right)}^{\delta'}\int_{\frac{v}{2}}^{+\infty}(v - y)^{2n} \frac{e^{-\frac{(v-y)^{2}}{2\epsilon}}}{\sqrt{2\pi\epsilon}}dy \nonumber \\
& & \qquad \leq C(n){(2\epsilon)}^{n}\int_{0}^{\frac{v}{2}}y^{\delta'}\frac{1}{v - y}dy
+ {\left(\frac{v}{2}\right)}^{\delta'}\int_{\mathbb{R}}(v - y)^{2n} \frac{e^{-\frac{(v-y)^{2}}{2\epsilon}}}{\sqrt{2\pi\epsilon}}dy \nonumber \\
& & \qquad \leq C(n){(2\epsilon)}^{n}{\left(\frac{v}{2}\right)}^{-1}\int_{0}^{\frac{v}{2}}y^{\delta'}dy
+ {\left(\frac{v}{2}\right)}^{\delta'}{\epsilon}^{n}\int_{\mathbb{R}}w^{2n} \frac{e^{-\frac{w^{2}}{2}}}{\sqrt{2\pi}}dy \nonumber \\
& & \qquad \leq \frac{C(n){(2\epsilon)}^{n}}{\delta'+1}{\left(\frac{v}{2}\right)}^{\delta'}
+ {\left(\frac{v}{2}\right)}^{\delta'}{\epsilon}^{n}\int_{\mathbb{R}}w^{2n} \frac{e^{-\frac{w^{2}}{2}}}{\sqrt{2\pi}}dy \nonumber \\
\label{eq:less}
\end{eqnarray}
which is again what we needed and thus the proof of 1. is complete. We proceed now to the proof of 2.. 

By the Cauchy-Schwarz inquality and Fubini's Theorem we have
\begin{eqnarray}
& & \left\Vert J_{u,\epsilon}(\cdot,\,\cdot)\right\Vert _{L^{2}\left(\Lambda;\,\tilde{L}_{\delta'}^2\right)}^{2} \nonumber \\
&&\qquad = \int_{\Lambda}\int_{\mathbb{R}^{+}}y^{\delta'}\left(\int_{\mathbb{R}^{+}}2vu\left(\lambda,\,v^{2}\right)\frac{e^{-\frac{(v-y)^{2}}{4\epsilon}}}{\sqrt[4]{2\pi\epsilon}}\frac{e^{-\frac{(v-y)^{2}}{4\epsilon}}}{\sqrt[4]{2\pi\epsilon}}dv\right)^{2}dyd\mu(\lambda) \nonumber \\
&&\qquad \leq \int_{\Lambda}\int_{\mathbb{R}^{+}}y^{\delta'}\left(\int_{\mathbb{R}^{+}}4v^{2}u^{2}\left(\lambda,\,v^{2}\right)\frac{e^{-\frac{(v-y)^{2}}{2\epsilon}}}{\sqrt{2\pi\epsilon}}dv\right)\left(\int_{\mathbb{R}}\frac{e^{-\frac{(v-y)^{2}}{2\epsilon}}}{\sqrt{2\pi\epsilon}}dv\right)dyd\mu(\lambda)
\nonumber \\
&& \qquad = \int_{\Lambda}\int_{\mathbb{R}^{+}}4v^{2}u^{2}\left(\lambda,\,v^{2}\right)\int_{\mathbb{R}^{+}}y^{\delta'}\frac{e^{-\frac{(v-y)^{2}}{2\epsilon}}}{\sqrt{2\pi\epsilon}}dydvd\mu(\lambda)
\nonumber \\
\label{eq:5.2}
\end{eqnarray}

Next, we see that

\begin{eqnarray*}
4v^{2}u^{2}\left(\lambda,\,v^{2}\right)\int_{\mathbb{R}^{+}}y^{\delta'}\frac{e^{-\frac{(v-y)^{2}}{2\epsilon}}}{\sqrt{2\pi\epsilon}}dy \rightarrow 4v^{2+\delta'}u^2\left(\lambda, \, v^2\right)
\end{eqnarray*} 
as $\epsilon \rightarrow 0^{+}$ for $v \geq 0$, and it can also be bounded by something integrable, uniformly in $\epsilon > 0$ (this can be seen by recalling \eqref{eq:more} and \eqref{eq:less} for $n=0$). Thus, by the Dominated Convergence Theorem, the RHS of \eqref{eq:5.2} converges to
\begin{eqnarray*}
\int_{\Lambda}\int_{\mathbb{R}^{+}}4v^{\delta'+2}u^{2}\left(\lambda,\,v^{2}\right)dvd\mu(\lambda)=\left\Vert J_{u}(\cdot,\,\cdot)\right\Vert _{L^{2}\left(\Lambda;\,L_{y^{\delta'}}^{2}\left(\mathbb{R}^{+}
\right)\right)}^{2}
\end{eqnarray*}
as $\epsilon\rightarrow0^{+}$. Therefore, we obtain
\begin{equation}
\limsup_{\epsilon\rightarrow0^{+}}\left\Vert J_{u,\epsilon}(\cdot,\,\cdot)
\right\Vert _{L^{2}\left(\Lambda;\,L_{y^{\delta'}}^{2}\left(\mathbb{R}^{+}
\right)\right)}^{2}
\leq\left\Vert J_{u}(\cdot,\,\cdot)\right\Vert _{L^{2}\left(\Lambda;\,L_{y^{\delta'}}^{2}\left(\mathbb{R}^{+}
\right)\right)}^{2}. \label{eq:5.3}
\end{equation}

Next, fix a measurable $A \subset \Lambda$ with $\mu(A) < +\infty$ and a smooth function $f:\,\mathbb{R}^{+}\rightarrow\mathbb{R}$
supported in some interval $\left[M_{1},\,M_{2}\right]$, where $0<M_1<M_2$. Then it holds that
\[
\int_{\mathbb{R}^{+}}f(y)\frac{e^{-\frac{(v-y)^{2}}{2\epsilon}}}{\sqrt{2\pi\epsilon}}dy\rightarrow f(v)
\]
pointwise as $\epsilon\rightarrow0^{+}$. Furthermore we have
\[
\int_{\mathbb{R}^{+}}f(y)\frac{e^{-\frac{(v-y)^{2}}{2\epsilon_{k_{m}}}}}{\sqrt{2\pi\epsilon_{k_{m}}}}dy
\leq C\sup_{y\in\mathbb{R}}\left|f(y)\right|\begin{cases} 1\,, & v\leq2M_{2}\\
\\
\frac{2M_{2}}{\left|v-M_{2}\right|}\,, & v>2M_{2}
\end{cases}
\]
for all $m\in\mathbb{N}$ and for some constant $C>0$ and thus, by applying Cauchy-Schwartz we obtain
\begin{eqnarray*}
& & \int_{\Lambda}\int_{M_2}^{+\infty}\mathbb{I}_{A}(\lambda)
2vu(\lambda,\,v^{2})\frac{2M_{2}}{\left|v-M_{2}\right|}dvd\mu(\lambda)
\\
& & \qquad \leq 2M_{2}\left\Vert J_{u}(\cdot,\,\cdot)\right\Vert _{L^{2}
\left(\Lambda;\,L_{y^{\delta'}}^{2}\left(\mathbb{R}^{+}\right)\right)}
\left(\mu(A)\int_{v>2M_{2}}\frac{1}{v^{\delta'}\left|v-M_{2}\right|^{2}}dv\right)^{1/2}
\end{eqnarray*}
which is finite. This means that we can apply the Dominated Convergence Theorem to obtain

\begin{eqnarray}
&& \lim_{\epsilon \rightarrow 0^{+}}\int_{\Lambda}\int_{\mathbb{R}^{+}}J_{u,\epsilon}(\lambda,\,y)f(y)\mathbb{I}_{A}(\lambda)dyd\mu(\lambda) \nonumber \\
& & \qquad =\lim_{\epsilon \rightarrow 0^{+}}\int_{\Lambda}\int_{\mathbb{R}^{+}}f(y)\left(\int_{\mathbb{R}}u(\lambda,\,z)\frac{e^{-\frac{(\sqrt{z}-y)^{2}}{2\epsilon}}}{\sqrt{2\pi\epsilon}}\mathbb{I}_{A}(\lambda)dz\right)dyd\mu(\lambda) \nonumber \\
& & \qquad =\lim_{\epsilon \rightarrow 0^{+}}\int_{\Lambda}\int_{\mathbb{R}^{+}}
f(y)\left(\int_{\mathbb{R}^{+}}2vu(\lambda,\,v^{2})\frac{e^{-\frac{(v-y)^{2}}{2\epsilon}}}{\sqrt{2\pi\epsilon}}\mathbb{I}_{A}(\lambda)dv\right)dyd\mu(\lambda)
\nonumber \\
& & \qquad =\lim_{\epsilon \rightarrow 0^{+}}\int_{\Lambda}\int_{\mathbb{R}^{+}}\mathbb{I}_{A}(\lambda)
2vu(\lambda,\,v^{2})\left(\int_{\mathbb{R}^{+}}f(y)\frac{e^{-\frac{(v-y)^{2}}{2\epsilon}}}{\sqrt{2\pi\epsilon}}dy\right)dvd\mu(\lambda)
\nonumber \\
& & \qquad = \int_{\Lambda}\int_{\mathbb{R}^{+}}\mathbb{I}_{A}(\lambda)
2vu(\lambda,\,v^{2})f(v)dvd\mu(\lambda) \label{eq:5.4}
\end{eqnarray}
so we deduce that $J_{u,\epsilon}(\cdot,\,\cdot)\rightarrow J_{u}(\cdot,\,\cdot)$
weakly in the Hilbert space $L^{2}\left(\Lambda;\,L_{y^{\delta'}}^{2}\left(\mathbb{R}^{+}
\right)\right)$ (since $A$ and $f$ are arbitrary).
Since a Hilbert space is always a uniformly convex space, by recalling
\eqref{eq:5.3} and Proposition~III.30 from \cite{Brezis} (page 75), we deduce
that $J_{u,\epsilon}(\cdot,\,\cdot)\rightarrow I_{u}(\cdot,\,\cdot)$
strongly in $L^{2}\left(\Lambda;\,L_{y^{\delta'}}^{2}\left(\mathbb{R}^{+}
\right)\right)$,
which implies 2.
\end{proof}

\begin{proof}[\textbf{Proof of Lemma 5.4}]
First, by our boundedness assumption we have that for any sequence $\left\{ \epsilon_{k}\right\}_{k\in\mathbb{N}}$
converging to $0^{+}$, there exists a decreasing subsequence $\left\{ \epsilon_{k_{m}}\right\} 
_{m\in\mathbb{N}}$ and an element $J_{u}^{l}\in L^{2}\left(\Lambda; \, L_{y^{\delta'}}^{2}\left(\mathbb{R}^{+}
\right)\right)$ for all $l\in\{1,\,2,\,...,\,n\}$, such that
\[
\frac{\partial^{l}}{\partial y^{l}}J_{u,\epsilon_{k_{m}}}\rightarrow J_{u}^{l},
\]
for all $l\in\{1,\,2,\,...,\,n\}$, weakly in $L^{2}\left(\Lambda; \, L_{y^{\delta'}}^{2}\left(\mathbb{R}^{+}
\right)\right)$
as $m\rightarrow +\infty$. Then, for any measurable $A \subset \Lambda$ with $\mu(A) < +\infty$ and any smooth and compactly supported function $f(z)$, we have
\[
\int_{\mathbb{R}}\frac{\partial^{l}}{\partial z^{l}}f(z)\frac{e^{-\frac{\left(z-y\right)^{2}}{2\epsilon}}}{\sqrt{2\pi\epsilon}}
dz\rightarrow\frac{\partial^{l}}{\partial y^{l}}f(y)
\]
pointwise as $\epsilon\rightarrow0^{+}$. Hence, we can
use Fubini's Theorem and the Dominated Convergence Theorem as we did
in the proof of Lemma~\ref{lem:5.3} (but for a partial derivative of $f$)
to obtain

\begin{eqnarray*}
& & \lim_{m\rightarrow\infty}\int_{\Lambda}
\int_{\mathbb{R}^{+}}\mathbb{I}_{A}(\lambda)J_{u,\epsilon_{k_{m}}}(\lambda,\,y)\frac{\partial^{l}}{\partial y^{l}}
f(y)dyd\mu(\lambda) \\
&& \qquad = \lim_{m\rightarrow\infty}\int_{\Lambda}\int_{\mathbb{R}}\mathbb{I}_{A}(\lambda)\frac{\partial^{l}}{\partial y^{l}}f(y)\left(\int_{\mathbb{R}}u(\lambda, \, z)\frac{e^{-\frac{(\sqrt{z}-y)^{2}}{2\epsilon_{k_{m}}}}}{\sqrt{2\pi\epsilon_{k_{m}}}}dz\right)dyd\mu(\lambda)
\\
& & \qquad =\lim_{m\rightarrow\infty}\int_{\Lambda}\int_{\mathbb{R}^{+}}\mathbb{I}_{A}(\lambda)\frac{\partial^{l}}{\partial y^{l}}
f(y)\left(\int_{\mathbb{R}^{+}}2vu(\lambda,\,v^{2})\frac{e^{-\frac{(v-y)^{2}}{2\epsilon_{k_{m}}}}}{\sqrt{2\pi\epsilon_{k_{m}}}}dv\right)dyd\mu(\lambda) \\
& & \qquad =\lim_{m\rightarrow\infty}\int_{\Lambda}\int_{\mathbb{R}^{+}}\mathbb{I}_{A}(\lambda)J_{u}(\lambda,\,v)\left(\int_{\mathbb{R}}\frac{\partial^{l}}{\partial y^{l}}f(y)\frac{e^{-\frac{(v-y)^{2}}{2\epsilon_{k_{m}}}}}{\sqrt{2\pi\epsilon_{k_{m}}}}dy\right)dvd\mu(\lambda)
\\
& & \qquad = \int_{\Lambda}\int_{\mathbb{R}^{+}}\mathbb{I}_{A}(\lambda)J_{u}(\lambda,\,v)\frac{\partial^{l}}{\partial y^{l}}f(v)dvd\mu(\lambda)
\end{eqnarray*}
and thus we have
\begin{eqnarray*} 
& & \int_{\Lambda}\int_{\mathbb{R}^{+}}\mathbb{I}_{A}(\lambda)J_{u}^{l}
(\lambda,\,y)f(y)dyd\mu(\lambda) \\
& & \qquad =\lim_{m\rightarrow\infty}\int_{\Lambda}
\int_{\mathbb{R}^{+}}\mathbb{I}_{A}(\lambda)\frac{\partial^{l}}{\partial y^{l}} J_{u,\epsilon_{k_{m}}}(\lambda,\,y) 
f(y)dyd\mu(\lambda) \\
& & \qquad = \lim_{m\rightarrow\infty} \left(-1\right)^{l}\int_{\Lambda}
\int_{\mathbb{R}^{+}}\mathbb{I}_{A}(\lambda)J_{u,\epsilon_{k_{m}}}(\lambda,\,y)\frac{\partial^{l}}{\partial y^{l}}
f(y)dyd\mu(\lambda) \\
& & \qquad =\left(-1\right)^{l}\int_{\Lambda}\int_{\mathbb{R}} 
\mathbb{I}_{A}(\lambda)J_{u}(\lambda,\,v)\frac{\partial^{l}}{\partial v^{l}}f(v)dvd\mu(\lambda),
\end{eqnarray*}
which means that $J_{u}^{l}$ is the $l$-th weak derivative
of $J_{u}$. Next, for any $l\leq n$ we have
\begin{eqnarray}
& & \left\Vert \frac{\partial^{l}}{\partial y^{l}}J_{u,\epsilon}(\cdot,\,\cdot)
\right\Vert _{L^{2}\left(\Lambda;\,L_{y^{\delta'}}^{2}\left(\mathbb{R}^{+}
\right)\right)}^{2} \nonumber \\
& & \qquad =\int_{\Lambda}\int_{\mathbb{R}^{+}}y^{\delta'}
\left(\frac{\partial^{l}}{\partial y^{l}}\int_{\mathbb{R}^{+}}2vu\left(\lambda,\,v^{2}\right)
\frac{e^{-\frac{(v-y)^{2}}{2\epsilon}}}{\sqrt{2\pi\epsilon}}dv\right)^{2}dyd\mu(\lambda)
\nonumber \\
& & \qquad =\int_{\Lambda}\int_{\mathbb{R}^{+}}y^{\delta'}
\left(\int_{\mathbb{R}^{+}}2vu\left(\lambda,\,v^{2}\right)\frac{\partial^{l}}{\partial y^{l}} 
\left(\frac{e^{-\frac{(v-y)^{2}}{2\epsilon}}}{\sqrt{2\pi\epsilon}}\right)dv\right)^{2}dyd\mu(\lambda) \nonumber \\
& & \qquad =\int_{\Lambda}\int_{\mathbb{R}^{+}}y^{\delta'}\left(\int_{\mathbb{R}^{+}}
J_{u}^{l}\left(\lambda,\,v\right)\frac{e^{-\frac{(v-y)^{2}}{4\epsilon}}}{\sqrt[4]{2\pi\epsilon}}
\frac{e^{-\frac{(v-y)^{2}}{4\epsilon}}}{\sqrt[4]{2\pi\epsilon}}dv\right)^{2}dyd\mu(\lambda) \nonumber \\
& & \qquad \leq \int_{\Lambda}\int_{\mathbb{R}^{+}}y^{\delta'}\left(\int_{\mathbb{R}^{+}}
\left(J_{u}^{l}\left(\lambda,\,v\right)\right)^{2}\frac{e^{-\frac{(v-y)^{2}}{2\epsilon}}}
{\sqrt{2\pi\epsilon}}dv\right)\left(\int_{\mathbb{R}}\frac{e^{-\frac{(v-y)^{2}}{2\epsilon}}}
{\sqrt{2\pi\epsilon}}dv\right)dyd\mu(\lambda) \nonumber \\
& & \qquad = \int_{\Lambda}\int_{\mathbb{R}^{+}}\left(J_{u}^{l}\left(\lambda,\,v\right)\right)^{2}\int_{\mathbb{R}^{+}}y^{\delta'}\frac{e^{-\frac{(v-y)^{2}}{2\epsilon}}}{\sqrt{2\pi\epsilon}}dydvd\mu(\lambda) \nonumber \\
\end{eqnarray}
which converges (by the same argument as in \eqref{eq:5.2} in the proof of Lemma~\ref{lem:5.3}) to
\begin{eqnarray*}
\int_{\Lambda}\int_{\mathbb{R}^{+}}v^{\delta'}\left(J_{u}^{l}\left(\lambda,\,v\right)\right)^{2}dvd\mu(\lambda) = \left\Vert J_{u}^{l}(\cdot,\,\cdot)\right\Vert _{L^{2}\left(\Lambda;\,L_{y^{\delta'}}^{2}\left(\mathbb{R}^{+}
\right)\right)}^{2}
\end{eqnarray*}
and thus we have
\begin{equation}
 \limsup_{\epsilon\rightarrow0^{+}}\left\Vert \frac{\partial^{l}}{\partial y^{l}}J_{u,\epsilon}(\cdot,\,\cdot)\right\Vert _{L^{2}\left(\Lambda;\,L_{y^{\delta'}}^{2}\left(\mathbb{R}^{+}
\right)\right)}^{2} \\
 \leq  \left\Vert J_{u}^{l}(\cdot,\,\cdot)\right\Vert _{L^{2}\left(\Lambda;\,L_{y^{\delta'}}^{2}\left(\mathbb{R}^{+}
\right)\right)}^{2}. \label{eq:5.7}
\end{equation}
 
Hence, by recalling Proposition~III.30 from \cite{Brezis} (as we did in the proof of Lemma~\ref{lem:5.3}), we can conclude that $\frac{\partial^{l}}{\partial z^{l}}J_{u,\epsilon_{k_{m}}}\rightarrow J_{u}^{l}$ as 
$m\rightarrow +\infty$,
strongly in the uniformly convex space $L^{2}\left(\Lambda;\,L_{y^{\delta'}}^{2}\left(\mathbb{R}^{+}
\right)\right)$, for 
all $l\in\{1,\,2,\,...,\,n\}$. The desired
result follows since the sequence $\left\{ \epsilon_{m}\right\} _{m\in\mathbb{N}}$
is arbitrary and since a weak derivative is always unique.
\end{proof}

\end{document}